\documentclass[final,3p,mathptmx]{article}

\oddsidemargin 0pt \headheight 0pt \topmargin 0pt \textheight 9in
\textwidth 6.2in \headsep 0pt \topskip 0pt

\usepackage{graphicx}
\usepackage{amsmath}
\usepackage{amsfonts}
\usepackage{amssymb}
\usepackage{amsthm}
\usepackage{newlfont}
\usepackage{longtable}
\usepackage{verbatim}

\usepackage{color}
\usepackage{tikz}
\usepackage{mathrsfs}



\newcommand{\R}{{\mathbb R}}
\newcommand{\N}{{\mathbb N}}

\newcommand{\RR}{{\mathbb R}}

\newcommand{\Rd}{{\R}^d}
\newcommand{\Comp}{\mathrm{K}(\Rd)}
\newcommand{\Conv}{\mathrm{Co}(\Rd)}

\newcommand{\inK}{\in\Comp}

\newcommand{\co}{\mathrm{co}}

\newcommand{\haus}{\mathrm{haus}}
\newcommand{\dist}{\mathrm{dist}}

\newcommand{\CH}{{\mathrm{CH}}}
\newcommand{\Graph}{{\mathrm{Graph}}}

\newcommand{\PrjXonY}[2]{\Pi_{#2}{(#1)} }
\newcommand{\Pair}[2]{\Pi \big ( {#1},{#2} \big )}
\newcommand{\MetInt}{{\scriptstyle(\cal M)}\int_{a}^{b}F(x)dx}
\newcommand{\WeightedMetInt}{{\scriptstyle(\cal M_{k})}\int_{a}^{b}k(x)F(x)dx}
\newcommand{\MetIntSpec}[2]{{\scriptstyle(\cal M)}\int_{#1}^{#2}}
\newcommand{\IntSel}[1]{\int_{a}^{b} {#1} (x)dx}

\newcommand{\Map}[1]{F:#1 \rightarrow \Comp }

\newcommand{\CBV}{\mathrm{CBV}[a,b]}
\newcommand{\BV}{\mathrm{BV}[a,b]}

\newcommand{\calF}{\cal{F}[a,b]}

\newcommand{\ModulCont}[2]{\omega\big( {#1},{#2} \big)}
\newcommand{\LeftLocalModul}[3]{\omega^{-}\big( {#1},{#2},{#3} \big)}
\newcommand{\RightLocalModul}[3]{\omega^{+}\big( {#1},{#2},{#3} \big)}
\newcommand{\NewLeftLocalModul}[3]{\varpi^{-}\big( {#1},{#2},{#3} \big)}
\newcommand{\NewRightLocalModul}[3]{\varpi^{+}\big( {#1},{#2},{#3} \big)}
\newcommand{\LocalModulCont}[3]{\omega\big( {#1},{#2},{#3} \big)}
\newcommand{\EqvClass}[4]{\mathscr{BV}_{#1}\big( {#2},{#3},{#4} \big)}

\newcommand{\eps}{\varepsilon}

\newcommand{\SymbText}[1]{\hbox to 10cm { \dotfill #1 \dotfill}}


\newtheorem{remark}{Remark}[section]

\newtheorem{theorem}[remark]{Theorem}
\newtheorem{propos}[remark]{Proposition}
\newtheorem{corol}[remark]{Corollary}
\newtheorem{lemma}[remark]{Lemma}
\newtheorem{result}[remark]{Result}
\newtheorem{example}[remark]{Example}
\newtheorem{defin}[remark]{Definition}
\newtheorem{pict}[remark]{Figure}

\hyphenation{metrics}

\begin{document}


\title {Metric Fourier approximation of set-valued functions\\ of bounded variation}

\renewcommand{\thefootnote}{\fnsymbol{footnote}}
\author{
Elena E. Berdysheva \footnotemark[1], \quad
Nira Dyn \footnotemark[2], \quad
Elza Farkhi \footnotemark[2]\; \footnotemark[3],\quad
Alona Mokhov \footnotemark[4]\;
}
\footnotetext[1]{Justus Liebig University Giessen, Germany }
\footnotetext[2]{Tel-Aviv University, School of Mathematical Sciences}
\footnotetext[3]{On leave from Institute of Mathematics and Informatics, Bulgarian Academy of Sciences, Sofia}
\footnotetext[4]{Afeka, Tel-Aviv Academic College of Engineering}
\date{}
\maketitle

\medskip

{ \small {\normalsize \textbf {Abstract.}}
 We introduce and investigate an adaptation of Fourier series to set-valued functions (multifunctions, SVFs) of bounded variation.  In our approach we define an analogue of the partial sums of the Fourier series  with the help of the Dirichlet kernel using the newly defined weighted metric integral. We derive error bounds for these approximants. As a consequence, we prove that the sequence of the partial sums converges pointwisely in the Hausdorff metric to the values of  the approximated set-valued function at its points of continuity, or to a certain set described in terms of the metric selections of the approximated multifunction at a point of discontinuity. Our error bounds are obtained with the help of the new notions of one-sided local moduli and quasi-moduli of continuity which we discuss more generally for functions with values in metric spaces.

}

\noindent{ \small {\normalsize \textbf{Key words:}} compact sets, set-valued functions, function of bounded variation,  metric selections, metric linear combinations, metric integral, metric approximation operators, trigonometric Fourier approximation.}

\noindent{ \small {\normalsize \textbf{Mathematics Subject Classification 2020:}} 26E25, 28B20, 28C20, 54C60, 54C65, 42A20, 42A99


\section {Introduction} \label{Sect_Intro}

Set-valued functions (SVFs, multifunctions) find applications in different fields such as economy, optimization, dynamical systems, control theory, game theory, differential inclusions, geometric modeling. 
Analysis of set-valued functions  has been a rapidly developing field in the last decades. One may consider the book \cite{AubinFrankowska:90} as establishing the field of set-valued analysis.  Approximation of SVFs has been developing in parallel.

Older approaches to the approximation, related mainly to control theory, investigate almost exclusively  SVFs with convex images (values). 
Research on approximation and numerical integration of set-valued functions with convex images can be found e.g. in \cite{VIT:79, ADontchevFarkhi:87, Veliov:int89, MNikolskii:89dokl, MNikolskii:Opt90, MNikolskii:deriv90, MNikolskii:91a, TDonchevFarkhi:90, BaierLempio:94a,BaierLempio:94b, Lempio:95, Baier:95thesis, DynFarkhi:00, DF:04, BaierFarkhi:07, Muresan:SVApprox2010, BaierPerria:11, Babenko:2016, Campiti:2019}. 
The standard tools used  are the Minkowski linear combinations and the Aumann integral.  It is well-known that the Aumann integral of a multifunction with compact values in $\Rd$  is convex even if the values of the integrand are not convex \cite{AUM:65}.  This property is called convexification, see e.g.~\cite{DF:04}. Also the Minkowski convex combinations with a growing number of summands suffer from convexification~\cite{DF:04}. 

Some newer applications, as geometric modeling for instance, motivate the study of  approximation of SVFs with general, not necessarily convex values. 
Trying to apply the known methods for the convex-valued case to set-valued functions with general values,
R.~A.~Vitale cosidered in \cite{VIT:79} the polynomial Bersntein operators 
adapted to SVFs by replacing linear combinations of numbers by 
the Minkowski linear combinations of sets.  While this construction works perfectly for SVFs with convex images, in the general case the sequence of so generated Bernstein approximants does not approximate the given SVF but 
the multifunction with values equal to the convex hulls of those of the original SVF. Clearly, such methods are useless for approximating set-valued functions with general, not necessarily convex images.

A pioneering work on approximation of SVFs with general images is done by Z.~Artstein~\cite{Artstein:MA}, who  constructs piecewise-linear interpolants of multifunctions. He replaces the Minkowski averages between two sets by the set of averages of special 
pairs of elements termed in later works ``metric pairs''.  Using the concept of metric pairs and metric linear combinations, N.~Dyn, E.~Farkhi and A.~Mokhov developed in  a series of works
techniques that are free of convexification and are suitable for approximating set-valued functions with general compact images. 
The tools used in these techniques include {repeated binary metric averages \cite{DynFarkhi:01, DynMokhov:06,DFM:Book_SV-Approx},}
metric linear combinations \cite{DFM:Chains,DFM:Book_SV-Approx}, metric selections \cite{DFM:Book_SV-Approx, DFM:MetricIntegral}
and the metric integral \cite{DFM:MetricIntegral}, which is extended here to a weighted metric integral. In~\cite{BDFM:2019, DFM:Chains, DFM:Book_SV-Approx, DFM:MetricIntegral} the authors studied approximation of set-valued functions by means of metric adaptations of classical approximation operators such as the Bernstein polynomial operator, the Schoenberg spline operator, the polynomial interpolation operator.
While in older papers the approximated SVFs are mainly continuous, the later works \cite{DFM:MetricIntegral,BDFM:2019} 
are concerned with multifunctions of bounded variation.  

The main topic  is an adaptation of the trigonometric Fourier series to set-valued functions of bounded variation with general compact images. We also try to obtain error bounds under minimal regularity requirements on the multifunctions to be approximated and focus  on the investigation on SVFs of bounded variation. We use in our analysis some properties of maps of bounded
variation with values in metric spaces proved in \cite{Chistyakov:On_BV-mappings}.

We are familiar only with few works on trigonometric approximation of multifunctions.  Some results on this topic for  convex-valued SVFs by methods based on the Aumann integral are obtained in \cite{Babenko:2016}. For the related topic of trigonometric approximation of fuzzy-valued functions see, e.g.~\cite{Anastassiou:Book2010, Bede:2005, Yavuz:2019, KadakBasar:2014}. Note that in this context the  level sets determine multifunctions with convex values (intervals in $\R$).

In this paper we define the metric analogue of  the partial sums of the Fourier series of a multifunction via convolutions  with the Dirichlet kernel of order $n$, for $n\ge 0$, the convolutions being defined as weighted metric integrals.
To study error bounds of these approximants and to prove convergence as $n\to \infty$, we introduce new one-sided local moduli of continuity in Section~\ref{Sect_Prelim_Regularity} and quasi-moduli of continuity in Section~\ref{Sect_Fourier}. 
The main result of the paper is analogous to the classical Dirichlet-Jordan Theorem for real functions~\cite{Zygmund:TrigSeries}. It states
the pointwise convergence in the Hausdorff metric of the metric Fourier approximants of a multifunction of bounded variation
to a compact set. 
In  particular, if the multifunction $F$ is of bounded variation and continuous at a point $x$, then the metric Fourier approximants of it at $x$ converge to $F(x)$. The convergence is uniform 
in closed finite intervals where $F$ is continuous. At a point of discontinuity the limit set is determined by the values of the metric selections of $F$ there.

The paper is organized as follows. In the next section some basic notions and notation are recalled.
One-sided local moduli of continuity of univariate functions with values in a metric space are introduced  and studied in Section 3. The theory developed in Section 3 is specified in Section 4 to set-valued functions of bounded variation, to their
chain functions and metric selections. In Section 5 the weighted metric integral is introduced and some of its properties are derived. The main results of the paper are presented in Section 6. To make the reading easier, the section is divided into three subsections. The first subsection contains the definition of the metric Fourier approximants of multifunctions. The second subsection contains a refinement of the classical Dirichlet-Jordan Theorem~\cite{Zygmund:TrigSeries}. There we obtain error bounds for the Fourier approximants 
for special classes of real functions of bounded variation. This refinement is used in the third subsection for the main results on the metric Fourier approximation of set-valued functions. In Section 7 we discuss properties of a set-valued function and of its metric selections at a point of discontinuity and  study the structure of the limit set of the metric Fourier approximants.

There are two appendices: Appendix A contains the proof of Theorem~\ref{theorem_PW-limit_of_MS_isMS} which is 
stated without a proof in Section 4 of \cite{DFM:MetricIntegral}.  Appendix B contains the proof of the refined Dirichlet-Jordan Theorem from Subsection~\ref{Subsec_FourierClasses}.


\section {Preliminaries}\label{Sect_Prelim}
In this section we introduce some notation and basic notions related to sets and set-valued functions. 

All sets considered from now on are sets in $\Rd$.  We denote by $\Comp$\label{CompSets} the collection of all compact non-empty subsets of~$\Rd$. By $\Conv$\label{ConvSets} we denote the collection of all convex sets in $\Comp$. The convex hull of a set $A$ is denoted by $\co(A)$\label{convex_hull}. 
The metric in $\Rd$ is of the form $\rho(u,v)=|u-v|$, where $|\cdot|$ is a norm on $\Rd$. 
Note that all norms on $\Rd$ are equivalent.  In the following we fix one norm in $\Rd$.  Recall that $\Rd$ is a complete metric space.

Let $A$ and $B$ be non-empty subsets of~$\Rd$. To measure the distance between $A$ and $B$, we use the Hausdorff metric based on $\rho$
\begin{equation}\label{defin_Haus}
	\haus(A,B)_{\rho}= \max \left\{ \sup_{a \in A}\dist(a,B)_{\rho},\; \sup_{b \in B}\dist(b,A)_{\rho} \right\},
\end{equation}
where the distance from a point $c$ to a set $D$ is $\dist(c,D)_{\rho}=\inf_{d \in D}\rho(c,d)$.

It is well known that $\Comp$ and $\Conv$ are complete metric spaces with respect to the Hausdorff metric~\cite{RockWets, {S:93}}.
For an arbitrary metric space $(X,\rho)$, the same formula \eqref{defin_Haus} defines a metric on the set $\mathcal{C}(X)$ of all non-empty closed subsets of $X$. It is known that the metric space $(\mathcal{C}(X), \haus)$ is complete if $(X,\rho)$ is complete. Moreover, $(\mathcal{C}(X), \haus)$ is compact if $X$ is compact (e.g. ~\cite[Section 4.4]{AmbrosioTilli:Topics}).

We denote by $|A|=\haus (A,\{0\})$ the ``norm'' of the set $A \in \Comp$.

The set of projections of $a \in \Rd$ on a set $B \inK$ is
$$
    \PrjXonY{a}{B}=\{b \in B \ : \ |a-b|=\dist(a,B)\},
$$
and the set of metric pairs of two sets $A,B \inK$ is
$$
\Pair{A}{B} = \{(a,b) \in A \times B \ : \ a \in \PrjXonY{b}{A}\;\, \mbox{or}\;\, b\in\PrjXonY{a}{B} \}.
$$
Using metric pairs, we can rewrite
$$
    \haus(A,B)= \max \{|a-b| \ :\ (a,b)\in \Pair{A}{B}\}.
$$

In \cite{DFM:MetricIntegral}, the three last-named authors introduced the notions of  a metric chain and of a metric linear combination as follows.

\begin{defin}\label{Def_MetChain_MetSelection} \cite{DFM:MetricIntegral}
  Given a finite sequence of sets $A_0, \ldots, A_n \in \Comp$, $n \ge 1$,  a metric chain of $A_0, \ldots, A_n$ is an $(n+1)$-tuple $(a_0,\ldots,a_n)$ such that $(a_i,a_{i+1}) \in \Pair {A_i}{A_{i+1}}$, $i=0,1,\ldots,n-1$. We denote the collection of all metric chains of $A_0, \ldots, A_n$ by 
$$
	\CH(A_0,\ldots,A_n)= \left\{ (a_0,\ldots,a_n) \ : \ (a_i,a_{i+1}) \in \Pair {A_i}{A_{i+1}}, \  i=0,1,\ldots,n-1 \right\}.
$$
 The metric linear combination of the sets $A_0, \ldots, A_n \in \Comp$, $n \ge 1$, is
$$
    \bigoplus_{i=0}^n \lambda_i A_i =
    \left\{ \sum_{i=0}^n \lambda_i a_i \ : \ (a_0,\ldots,a_n) \in \CH(A_0,\ldots,A_n) \right\}, \quad \lambda_0,\ldots,\lambda_n \in \R.
$$
\end{defin}

Note that the metric linear combination depends on the order of the sets, in contrast to the Minkowski linear combination of sets which is defined by
$$
    \sum_{i=0}^n \lambda_i A_i =
    \left\{ \sum_{i=0}^n \lambda_i a_i \ : \  a_i \in A_i \right\},\quad n \ge 1.
$$

For a sequence of sets $\{A_n\}_{n=1}^{\infty}$ the lower Kuratowski limit is the set of all limit points of converging sequences
$\{a_{n}\}_{n=1}^{\infty}$, where $a_{n} \in A_{n} $,  namely,
$$
    \liminf_{n \to \infty} A_n = \left\{a \ : \  \exists \, a_{n} \in A_{n} \text{ such that } \lim_{n \to \infty}a_{n} = a \right\}.
$$
Analogously, for a set-valued function $F:[a,b]\to \Comp$ and $\widetilde{x} \in [a,b]$  we define
$$
\liminf_{x \to \widetilde{x}} F(x) = \left\{y \ : \  \forall \, \{x_k\}_{k=1}^{\infty} \subset [a,b] \ \text{with} \  x_k\to\widetilde{x} \ \exists \, \{y_k\}_{k=1}^{\infty} \ \text{with} \ y_k\in F(x_k),  k\in\N, \ \text{and} \ y_k \to y
 \right\}.
$$

The upper Kuratowski limit is the set of all limit points of converging subsequences
$\{a_{n_k}\}_{k=1}^{\infty}$, where ${a_{n_k} \in A_{n_k} }$, $k\in \N$, namely
$$
    \limsup_{n \to \infty} A_n = \left\{a \ : \  \exists\, \{n_k\}_{k=1}^{\infty},\, n_{k+1}>n_k,\, k\in \N,\ \exists \, a_{n_k} \in A_{n_k} \text{ such that } \lim_{k \to \infty}a_{n_k} = a \right\}.
$$
Correspondingly, for a set-valued function $F:[a,b]\to \Comp$ and $\widetilde{x} \in [a,b]$
$$
\limsup_{x \to \widetilde{x}} F(x) = \left\{y \ : \  \exists \, \{x_k\}_{k=1}^{\infty} \subset [a,b] \ \text{with} \  x_k\to\widetilde{x} \ \exists \, \{y_k\}_{k=1}^{\infty} \ \text{with} \ y_k\in F(x_k),  k\in\N, \ \text{and} \ y_k \to y
 \right\}.
$$

A sequence $\{A_n\}_{n=1}^{\infty}$ converges in the sense of Kuratowski to $A$ if ${\displaystyle A =  \liminf_{n \to \infty} A_n  = \limsup_{n \to \infty} A_n }$. 
Similarly, a set $A$ is a Kuratowski limit of $F(x)$ as $x \to \widetilde{x}$ if  ${\displaystyle A =  \liminf_{x \to \widetilde{x}} F(x)  = \limsup_{x \to \widetilde{x}} F(x) }$. 

\begin{remark}\label{Remark_Kurat=Haus}
There is a connection between convergence in the  sense of Kuratowrski and convergence in the Hausdorff metric, the latter meaning that $\displaystyle \lim_{n \to \infty} { \haus (A_n,A)} = 0$ or $\displaystyle \lim_{x \to \widetilde{x}} { \haus \big( F(x), A \big)} = 0$,  respectively.  If the underlying space $X$ is compact, then convergence in the Hausdorff metric and in the sense of Kuratowski are equivalent (see, e.g.,~\cite[Section 4.4]{AmbrosioTilli:Topics}).
\end{remark}


\section {Regularity measures of functions with values in a metric space}\label{Sect_Prelim_Regularity}

Here we consider regularity measures of functions defined on a fixed interval $[a,b] \subset \R$ with values in a complete metric space $(X,\rho)$.

A basic notion in this paper is the \textbf{modulus-bounding function} $\omega(\delta)$ which is a non-decreasing function $\omega: [0,\infty) \rightarrow [0,\infty)$. Frequently we occur the situation when in addition $\lim\limits_{\delta \to 0^+} \omega(\delta)=0$, but we do not require this in the definition.

In the analysis of continuity of a function at a point, the notion of the local modulus of continuity is instrumental~\cite{SendovPopov}
\begin{equation}\label{Def_ClassicalModuliContin}
\LocalModulCont{f}{x^*}{\delta} = \sup \left\{\, \rho(f(x_1),f(x_2)): \quad x_1,x_2 \in \left[x^*-\frac{\delta}{2},x^*+\frac{\delta}{2} \right]\cap[a,b] \,\right\},\quad \delta >0.
\end{equation}

To characterize left and right continuity of functions, we introduce the left and the right  local moduli of continuity, respectively.

\begin{defin}
The left local modulus of continuity of $f$ at $x^*\in[a,b]$ is
\begin{equation}\label{defin_LeftModul}
\LeftLocalModul{f}{x^*}{\delta}=\sup \left\{ \rho(f(x),f(x^*)) \ :\ x\in [x^*-\delta, x^*] \cap [a,b] \right \},\quad \delta>0.
\end{equation}
Similarly, the right local modulus of continuity of $f$ at $x^* \in [a,b]$ is
\begin{equation}\label{defin_RightModul}
\omega^{+}(f,x^*,\delta)=\sup \left \{\rho(f(x),f(x^*)) \ : \ x\in [x^*,x^*+\delta] \cap [a,b] \right \},\quad \delta>0.
\end{equation}
\end{defin}

\begin{remark}\label{Remark_Moduli}
	\begin{enumerate}
		\item[]${}$
		\item[(i)] One can define the one-sided  local moduli of continuity analogously to~\eqref{Def_ClassicalModuliContin}, for example, the left local modulus as 
		$$
		\nu^{-}(f,x^*,\delta)=\sup \left \{ \rho(f(x_1),f(x_2))\ : \ x_1,x_2\in [x^*-\delta, x^*] \cap [a,b] \right \},\quad \delta>0.
		$$
		Yet it is easily seen that this quantity is equivalent to~\eqref{defin_LeftModul}, namely
		$$
		\omega^{-}(f,x^*,\delta) \le \nu^{-}(f,x^*,\delta) \le 2\omega^{-}(f,x^*,\delta).
		$$
		\item[(ii)] Note that the classical global modulus of continuity $\displaystyle \ModulCont{f}{\delta} = \sup_{x\in[a,b]} \LocalModulCont{f}{x}{\delta}$ is subadditive in $\delta$, while this property is not satisfied by the local moduli.
	\end{enumerate}
\end{remark}
\medskip

The following relations hold for $x^* \in [a,b]$:
\begin{equation}\label{LocalModules-Prop1}
\max \{ \omega^{-}(f,x^*,\delta), \omega^{+}(f,x^*,\delta) \} \le \LocalModulCont{f}{x^*}{2\delta} ,
\end{equation}
$$
\LocalModulCont{f}{x^*}{\delta} \le 2\max \left \{ \omega^{-}\left(f,x^*,\delta/2 \right),  \omega^{+}\left(f,x^*,\delta/2\right) \right \}, \quad \delta > 0.
$$

In the next proposition we extend some properties known for the local modulus of continuity $\omega(f,x^*,\delta)$ to the one-sided local moduli.

\begin{propos}\label{Prop_LocalModul_Vanishes}
A function  $f:[a,b]\to X$ is left continuous at $x^* \in (a,b]$ if and only if $\lim\limits_{\delta \to 0+}\LeftLocalModul{f}{x^*}{\delta}= 0$.  
The function  $f$ is right continuous at $x^* \in [a,b)$ if and only if $\lim\limits_{\delta \to 0+}\omega^{+}(f,x^*,\delta)=0$.
\end{propos}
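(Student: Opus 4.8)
The plan is to prove the statement by directly unwinding the two definitions involved, since the equivalence is essentially a reformulation of the $\eps$--$\delta$ definition of one-sided continuity in terms of the supremum $\omega^{-}(f,x^*,\delta)$. First I would observe that the two assertions (for left and for right continuity) are completely symmetric: replacing $[x^*-\delta,x^*]$ by $[x^*,x^*+\delta]$ turns one into the other. Hence it suffices to prove the statement for the left modulus $\omega^{-}(f,x^*,\delta)$, and the right case follows verbatim. I would also record at the outset the elementary but crucial monotonicity property: for $0<\delta_1\le\delta_2$ one has $[x^*-\delta_1,x^*]\cap[a,b]\subseteq[x^*-\delta_2,x^*]\cap[a,b]$, so the supremum in \eqref{defin_LeftModul} is taken over a larger set and therefore $\omega^{-}(f,x^*,\delta_1)\le\omega^{-}(f,x^*,\delta_2)$. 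Since $\omega^{-}$ is non-negative and non-decreasing in $\delta$, the limit $\lim_{\delta\to0+}\omega^{-}(f,x^*,\delta)$ exists and equals $\inf_{\delta>0}\omega^{-}(f,x^*,\delta)$; in particular the limit is $0$ if and only if the infimum is $0$, which will make both implications clean.

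For the implication that $\lim_{\delta\to0+}\omega^{-}(f,x^*,\delta)=0$ forces left continuity, I would fix $\eps>0$, choose $\delta>0$ with $\omega^{-}(f,x^*,\delta)<\eps$, and note that for every $x\in[x^*-\delta,x^*]\cap[a,b]$ the single term $\rho(f(x),f(x^*))$ is bounded by the supremum, hence $\rho(f(x),f(x^*))\le\omega^{-}(f,x^*,\delta)<\eps$. This is exactly the statement that $\lim_{x\to x^{*-}}f(x)=f(x^*)$, i.e. left continuity of $f$ at $x^*$.

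For the converse, I would assume $f$ is left continuous at $x^*$ and fix $\eps>0$. By definition there is $\delta>0$ such that $\rho(f(x),f(x^*))<\eps/2$ for all $x\in[x^*-\delta,x^*]\cap[a,b]$. Taking the supremum over such $x$ then yields $\omega^{-}(f,x^*,\delta)\le\eps/2<\eps$, and by the monotonicity noted above $\omega^{-}(f,x^*,\delta')\le\omega^{-}(f,x^*,\delta)<\eps$ for every $0<\delta'\le\delta$. Since $\eps>0$ was arbitrary, this gives $\lim_{\delta\to0+}\omega^{-}(f,x^*,\delta)=0$. The only point requiring a little care — and the closest thing to an obstacle in an otherwise routine argument — is precisely this passage from a strict pointwise bound to a bound on the supremum: one cannot conclude that the supremum is $<\eps$ from each term being $<\eps$, which is why I bound each term by $\eps/2$ so that the resulting supremum is $\le\eps/2$ and hence strictly below $\eps$. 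With the left case established, the right case is obtained by the symmetric argument, completing the proof.
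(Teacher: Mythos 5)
Your proof is correct and follows essentially the same route as the paper's: both directions are obtained by directly unwinding the definition of $\omega^{-}$ together with its monotonicity in $\delta$. Your extra care in using $\eps/2$ to pass from a pointwise strict bound to a bound on the supremum is a minor tightening of a step the paper states more loosely, but it does not change the argument.
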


\begin{proof}
A function $f$ is left continuous at $x^*$ if and only if for every $\varepsilon >0$ there exists $\delta>0$ such that $\rho(f(x),f(x^*))<\varepsilon$ for all $x\in [x^*-\delta, x^*]\cap[a,b]$. This implies that
$$
\LeftLocalModul{f}{x^*}{\delta}=\sup \left\{ \rho(f(x),f(x^*)) \ : \ x\in [x^*-\delta, x^*]\cap[a,b] \right\} < \varepsilon.
$$
Since, by definition, $\LeftLocalModul{f}{x^*}{\delta}$ is non-increasing in $\delta$, the above is equivalent to  $\lim\limits_{\delta \rightarrow 0+}\LeftLocalModul{f}{x^*}{\delta}=0$.
The proof for $\omega^{+}(f,x^*,\delta)$ is similar.
\end{proof}

We recall the notion of the variation of a function  ${f:[a,b]\rightarrow X}$. Let $\chi=\{x_0,\ldots, x_n\} $, $a=x_0 < \cdots <x_n=b$, be a partition of the interval $[a,b]$ with the norm 
$$
|\chi|=\max_{0\le i\le n-1} (x_{i+1}-x_i).
$$ 
The variation of $f$ on the partition $\chi$ is defined as
$$
V(f,\chi) = \sum_{i=1}^{n} \rho(f(x_i),f(x_{i-1})).
$$
The total variation of $f$ on $[a,b]$ is 
$$
V_{a}^{b}(f) = \sup_{\chi} V(f,\chi),
$$
where the supremum is taken over all partitions $\chi$ of $[a,b]$.

A function $f$ is said to be of bounded variation if ${ V_{a}^{b}(f) < \infty}$. We call functions of bounded variation BV functions and write $f \in \BV$. If $f$ is also continuous,  we write $f\in \CBV$.

For $f \in \BV$ the  function $v_f:[a,b]\rightarrow \R$,\, $v_f(x)=V_{a}^{x}(f)$ is called the variation function of $f$. Note that 
$$
	V_{z}^{x}(f)=v_f(x)-v_f(z) \quad  \mbox{for} \quad a\le z<x \le b,
$$
and that $v_f$ is monotone non-decreasing.

\begin{propos}\label{Prop_LocalModul_f<LocalModul_v_f}
For a function ${f:[a,b]\to X}$, ${f \in \BV}$ we have
$$
\LeftLocalModul{f}{x^*}{\delta}\le\LeftLocalModul{v_f}{x^*}{\delta} \quad \text{and} \quad \omega^{+}(f,x^*,\delta) \le \omega^{+}(v_f,x^*,\delta),
\quad x^* \in [a,b], \quad \delta > 0.
$$
\end{propos}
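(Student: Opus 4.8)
The plan is to reduce both inequalities to a single elementary estimate: the distance $\rho(f(x),f(x^*))$ between two function values is bounded by the variation of $f$ over the interval joining $x$ and $x^*$, which in turn equals a difference of values of $v_f$. Since $v_f$ is real-valued and monotone, that difference is automatically non-negative with the correct sign, and the whole statement then follows by taking suprema.

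First I would treat the left modulus. Fix $x^* \in [a,b]$ and $\delta>0$, and take an arbitrary $x\in[x^*-\delta,x^*]\cap[a,b]$, so that $a\le x\le x^*$. The key observation is that the trivial two-point partition $\{x,x^*\}$ of $[x,x^*]$ is admissible in the definition of the total variation, whence
$$
\rho(f(x),f(x^*)) \le V_{x}^{x^*}(f).
$$
By the relation $V_{z}^{x}(f)=v_f(x)-v_f(z)$ recalled above (applied with lower endpoint $x$ and upper endpoint $x^*$) we get $V_{x}^{x^*}(f)=v_f(x^*)-v_f(x)$, and since $v_f$ is non-decreasing and $x\le x^*$ this quantity is non-negative and equals $|v_f(x)-v_f(x^*)|$, which is precisely $\rho(v_f(x),v_f(x^*))$ for the real-valued function $v_f$. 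Combining the two displays gives $\rho(f(x),f(x^*)) \le |v_f(x)-v_f(x^*)|$ for every admissible $x$. Taking the supremum over all $x\in[x^*-\delta,x^*]\cap[a,b]$, the left-hand side is $\omega^{-}(f,x^*,\delta)$ and the right-hand side is $\omega^{-}(v_f,x^*,\delta)$, which is the first claimed inequality.

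The argument for the right modulus is entirely symmetric: for $x\in[x^*,x^*+\delta]\cap[a,b]$ one has $x^*\le x$, the two-point partition $\{x^*,x\}$ yields $\rho(f(x^*),f(x))\le V_{x^*}^{x}(f)=v_f(x)-v_f(x^*)=|v_f(x)-v_f(x^*)|$, and taking suprema gives $\omega^{+}(f,x^*,\delta)\le\omega^{+}(v_f,x^*,\delta)$. Note that $f\in\BV$ guarantees $v_f(x)=V_{a}^{x}(f)\le V_{a}^{b}(f)<\infty$, so every quantity above is finite and the subtractions are legitimate.

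There is essentially no obstacle here; the only point demanding care is the directionality of the sampling point relative to $x^*$ (left versus right), which is exactly what ensures the difference of variation-function values carries the correct sign and lets one drop the absolute value when identifying it with the segment variation. The edge cases $x^*=a$ (for $\omega^{-}$) and $x^*=b$ (for $\omega^{+}$) are trivial, since there the relevant intersected interval reduces to the single point $\{x^*\}$ and both sides vanish.
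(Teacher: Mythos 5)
Your proof is correct and follows essentially the same route as the paper's: bound $\rho(f(x),f(x^*))$ by the segment variation $V_x^{x^*}(f)=v_f(x^*)-v_f(x)$, use monotonicity of $v_f$ to identify this with $\rho(v_f(x),v_f(x^*))$, and take suprema. The handling of the edge cases and the symmetric right-modulus argument also match the paper's treatment.
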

\begin{proof}
We prove only the first inequality, the proof of the second one is similar. 

\noindent If $x^* = a$ then both sides of the inequality are zero and the claim follows. For $x^* \in (a,b]$  we have
\begin{align*}
&\LeftLocalModul{f}{x^*}{\delta} =\sup \{\rho(f(x),f(x^*)) \ : \  \max\{x^*-\delta, a\}  \le x\le x^* \} \le \sup \{ V_x^{x^*}(f) \ :\  \max\{x^*-\delta, a\}  \le x\le x^* \} \\
& = \sup \{v_f(x^*)-v_f(x) \ : \   \max\{x^*-\delta, a\}  \le x\le x^* \} 
= \sup \{ |v_f(x^*)-v_f(x)| \ :\  \max\{x^*-\delta, a\}  \le x\le x^* \} \\
& =\LeftLocalModul{v_f}{x^*}{\delta}.
\end{align*}
\end{proof}
The following claim is a slight refinement of Proposition~1.1.1 in~\cite{DFM:Book_SV-Approx} and of~\cite[Chapter~9, Sec.~32, Theorem~3]{Kolmogorov:HellyTheorems}. 

\begin{propos}\label{Prop_LeftContEquiv_v_f-f}
A function $f:[a,b]\to X$, $f \in \BV$ is left continuous at $x^*\in (a,b]$ if and only if $v_f$ is left continuous at~$x^*$.
The function $f$ is right continuous at $x^* \in [a,b)$ if and only if $v_f$ is right continuous at~$x^*$.
\end{propos}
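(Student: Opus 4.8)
The plan is to establish the left-continuity equivalence in full and then note that the right-continuity statement follows by a symmetric argument, indicating only the changes. The equivalence splits into two implications of very unequal difficulty, so I would treat them separately.

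The implication ``$v_f$ left continuous at $x^*$ $\Rightarrow$ $f$ left continuous at $x^*$'' is immediate from the results already at hand. By Proposition~\ref{Prop_LocalModul_f<LocalModul_v_f} we have $\omega^{-}(f,x^*,\delta)\le \omega^{-}(v_f,x^*,\delta)$ for every $\delta>0$. If $v_f$ is left continuous at $x^*$, then by Proposition~\ref{Prop_LocalModul_Vanishes} the right-hand side tends to $0$ as $\delta\to 0+$, hence so does $\omega^{-}(f,x^*,\delta)$, and Proposition~\ref{Prop_LocalModul_Vanishes} applied to $f$ itself yields the left continuity of $f$ at $x^*$.

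The substantial direction is ``$f$ left continuous at $x^*$ $\Rightarrow$ $v_f$ left continuous at $x^*$''. Since $v_f$ is non-decreasing, the one-sided limit $v_f(x^{*-})=\lim_{x\to x^{*-}}v_f(x)$ exists and satisfies $v_f(x^{*-})\le v_f(x^*)$; using $v_f(x^*)-v_f(x)=V_x^{x^*}(f)$, left continuity of $v_f$ amounts to showing $V_x^{x^*}(f)\to 0$ as $x\to x^{*-}$. I would fix $\varepsilon>0$ and use left continuity of $f$ to choose $\delta_1>0$ with $\rho(f(x),f(x^*))<\varepsilon/2$ for all $x\in[x^*-\delta_1,x^*]\cap[a,b]$. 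By the definition of the total variation I then pick a partition $\chi=\{a=t_0<\cdots<t_m=x^*\}$ of $[a,x^*]$ with $V(f,\chi)>V_a^{x^*}(f)-\varepsilon/2$, and refine it (which, by the triangle inequality, never decreases $V(f,\chi)$) by inserting a point $s\in(\max\{t_{m-1},x^*-\delta_1\},x^*)$; this interval is nonempty since $t_{m-1}<x^*$ and $x^*-\delta_1<x^*$. The last subinterval of the refined partition is then $[s,x^*]$ with $s\in[x^*-\delta_1,x^*)$.

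Splitting the refined variation sum at $s$, the part over $[a,s]$ is itself a variation sum for a partition of $[a,s]$ and is therefore bounded by $v_f(s)$, while the final term $\rho(f(s),f(x^*))$ is less than $\varepsilon/2$. Combining this with the lower bound $V(f,\chi)>v_f(x^*)-\varepsilon/2$ gives $v_f(x^*)<v_f(s)+\varepsilon$, that is $v_f(x^*)-v_f(s)<\varepsilon$. Since $v_f$ is non-decreasing, $0\le v_f(x^*)-v_f(x)\le v_f(x^*)-v_f(s)<\varepsilon$ for every $x\in[s,x^*)$, which proves $v_f(x^{*-})=v_f(x^*)$; the case $x^*=a$ is vacuous. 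I expect this direction to be the main obstacle: one must locate, inside an almost-maximal partition, a breakpoint $s$ close enough to $x^*$ that the left-continuity estimate on $f$ applies there while almost all of the variation on $[a,x^*]$ is already captured on $[a,s]$, and then use the monotonicity of $v_f$ to transfer the resulting estimate from the single point $s$ to all points between $s$ and $x^*$. The right-continuous case is obtained by the mirror-image construction, inserting $s\in(x^*,\min\{t_1,x^*+\delta_1\})$ and splitting at $s$ on the right.
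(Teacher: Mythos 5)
Your proposal is correct and follows essentially the same route as the paper's proof: the easy direction via Propositions~\ref{Prop_LocalModul_f<LocalModul_v_f} and~\ref{Prop_LocalModul_Vanishes}, and the hard direction by taking a near-optimal partition of $[a,x^*]$, refining it so that its last breakpoint lies inside the left-continuity window of $f$, splitting the variation sum there, and transferring the estimate to all nearby points by monotonicity of $v_f$. No substantive differences.
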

\begin{proof} We prove only the first statement, the proof of the second one is similar. 

If $v_f$ is left continuous at $x^*$,  then by Propositions~\ref{Prop_LocalModul_f<LocalModul_v_f} and~\ref{Prop_LocalModul_Vanishes} also $f$ is left continuous at $x^*$. Now we prove the other direction. We closely follow the proof in \cite[Chapter 9, Sec. 32, Theorem 3]{Kolmogorov:HellyTheorems}.

Assume that $f$ is left continuous at $x^*$.  Then for each $\varepsilon>0$ there exists $\delta>0$ such that
\begin{equation}\label{ref_contin}
	\rho(f(x),f(x^*))<\varepsilon /2, \quad x\in (x^*-\delta, x^*).
\end{equation}
The definition of the total variation implies that one can choose a partition $\chi=\{a=x_0<x_1<\cdots<x_n=x^*\}$ such that
$$
V_{a}^{x^*}(f) < V(f,\chi) + \varepsilon/2 = \sum_{i=1}^{n} \rho(f(x_i),f(x_{i-1})) + \varepsilon/2.
$$
Adding more points to $\chi$ if necessary, we can guarantee that $0 < x^*-x_{n-1} < \delta$. Then by~\eqref{ref_contin} we have $\rho(f(x^*),f(x_{n-1}))<\varepsilon /2$.
Thus,
$$
v_f(x^*)=V_{a}^{x^*}(f) < \sum_{i=1}^{n-1} \rho(f(x_i),f(x_{i-1})) + \varepsilon \le v_f(x_{n-1})+\varepsilon,
$$
and consequently $v_f(x^*) - v_f(x_{n-1}) < \varepsilon$. Put $\delta^*= x^*-x_{n-1} > 0$. By the monotonicity of $v_f$,
$$
v_f(x^*) - v_f(x) < \varepsilon
$$
holds for all $x\in (x^* - \delta^*,x^*)$. This means that $v_f$ is left continuous at $x^*$.
\end{proof}

Analogs of Propositions~\ref{Prop_LocalModul_f<LocalModul_v_f} and \ref{Prop_LeftContEquiv_v_f-f}  for the two-sided local modulus of continuity are well-known: 
\begin{propos}\label{Corol_ContEquiv_v_f-f}
For a function ${f:[a,b]\to X}$, ${f \in \BV}$ we have
$$
\omega(f,x^*,\delta) \le \omega(v_f,x^*,\delta), \quad x^* \in [a,b], \quad \delta > 0.
$$
Moreover, $f$  is continuous at $x^* \in [a,b]$ if and only if $v_f$ is continuous at~$x^*$.
\end{propos}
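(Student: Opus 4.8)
The plan is to treat the two assertions separately: prove the inequality directly, mirroring the argument for Proposition~\ref{Prop_LocalModul_f<LocalModul_v_f}, and then deduce the continuity equivalence by reducing it to the one-sided statements already at our disposal.

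For the inequality, I would fix $x^* \in [a,b]$ and $\delta > 0$, take an arbitrary pair $x_1, x_2 \in [x^*-\delta/2, x^*+\delta/2]\cap[a,b]$, and assume without loss of generality that $x_1 \le x_2$. Since $f \in \BV$, the single-interval estimate $\rho(f(x_1),f(x_2)) \le V_{x_1}^{x_2}(f)$ holds, and by the additivity and monotonicity of the variation function the right-hand side equals $v_f(x_2)-v_f(x_1) = |v_f(x_2)-v_f(x_1)|$. Taking the supremum over all admissible pairs $x_1,x_2$ gives $\omega(f,x^*,\delta) \le \omega(v_f,x^*,\delta)$, which is exactly the asserted inequality.

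For the continuity equivalence, the cleanest route is to observe that continuity at a point is the conjunction of left and right continuity there, and then to apply Proposition~\ref{Prop_LeftContEquiv_v_f-f} to each side. Concretely, for an interior point $x^* \in (a,b)$, the function $f$ is continuous at $x^*$ if and only if it is both left and right continuous at $x^*$; by Proposition~\ref{Prop_LeftContEquiv_v_f-f} these are respectively equivalent to left and right continuity of $v_f$ at $x^*$, hence to continuity of $v_f$ at $x^*$. At the endpoints $x^*=a$ and $x^*=b$, where continuity is understood one-sidedly, a single application of Proposition~\ref{Prop_LeftContEquiv_v_f-f} already closes the argument. As a consistency check, one direction also drops out of the inequality just proved: if $v_f$ is continuous at $x^*$ then $\lim_{\delta\to 0+}\omega(v_f,x^*,\delta)=0$, and the inequality forces $\lim_{\delta\to 0+}\omega(f,x^*,\delta)=0$, so $f$ is continuous at $x^*$.

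I do not expect any serious obstacle here, since the substantive content is already carried by the one-sided Propositions~\ref{Prop_LocalModul_f<LocalModul_v_f} and~\ref{Prop_LeftContEquiv_v_f-f}. The only points requiring care are the endpoint bookkeeping, where \emph{continuity} must be read as the appropriate one-sided notion, and, if one prefers the modulus-based proof of the continuity part, the appeal to the vanishing of the two-sided local modulus at a point of continuity; the latter can itself be reduced to the one-sided statement of Proposition~\ref{Prop_LocalModul_Vanishes} by means of the comparison relations in~\eqref{LocalModules-Prop1}.
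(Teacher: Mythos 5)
Your proposal is correct and follows essentially the same route as the paper: the inequality is proved "along the same lines" as Proposition~\ref{Prop_LocalModul_f<LocalModul_v_f} via $\rho(f(x_1),f(x_2))\le V_{x_1}^{x_2}(f)=v_f(x_2)-v_f(x_1)$, and the continuity equivalence is obtained by combining the left and right versions of Proposition~\ref{Prop_LeftContEquiv_v_f-f}. Your extra care with the endpoint cases and the consistency check via the inequality are fine but not needed beyond what the paper's one-line argument already covers.
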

The first statement can be proved along the same lines, and the second statement follows immediately from Proposition~\ref{Prop_LeftContEquiv_v_f-f}. 

\begin{remark}
	Note that, in general, $\omega(f,x^*,\delta)$ and $\omega(v_f, x^*,\delta)$ are not equivalent for $f \in \BV$. As an example, consider $f(x) = x^2 \sin{\frac{1}{x}} \in \mathrm{BV}[0,1]$ (where we define $f(0) = 0$ by continuity). It is easy to see that 
	$$
	\omega(f,0,\delta) = \sup{ \left\{ |f(x_1) - f(x_2)| : x_1,x_2 \in \left[ 0, \delta/2 \right] \right\} }
	\le 2 \left( \frac{\delta}{2} \right)^2 = \frac{\delta^2}{2}, \quad \delta > 0.
	$$
	To estimate the local variation of $f$, consider the points $\frac{1}{x_k} = \frac{\pi}{2} + \pi k$, $k \in \N$, so that $\sin{ \frac{1}{x_k} } = (-1)^k$. Then
	$$
	\omega(v_f, 0, \delta) 
	=  V_0^{\delta/2}(f) \ge 2 \sum_{ k > \frac{2}{\delta \pi} - \frac{1}{2}} \left( \frac{1}{\frac{\pi}{2} + \pi k} \right)^2
	\ge \frac{2}{\pi^2} \sum_{ k > \frac{2}{\delta \pi} + \frac{1}{2}} \frac{1}{k^2} \sim \delta.
	$$
\end{remark}

Helly's Selection Principle (see, e.g.~\cite[Chapter 6]{Kolmogorov:HellyTheorems}) will be heavily used  in our analysis. We cite a version of it which is relevant to our paper.  

\noindent \textbf{Helly's Selection Principle}. \textit{ Let $\{f_n\}_{n \in \N}$ be a sequence of functions $f_n : [a,b] \to \RR$, and assume that there are constants $A,B > 0$ such that  $|f_n(x)| \le A$, $n \in \N$, $x \in [a,b]$ and $V_a^b(f_n) \le B$, $n \in \N$.  Then $\{f_n\}_{n \in \N}$ contains a subsequence $\{f_{n_k}\}_{k \in \N}$ that converges pointwisely to a function $f^{\infty} : [a,b] \to \RR$, i.e., $f^{\infty}(x) = \lim_{k \to \infty} f_{n_k}(x)$, $x \in [a,b]$.
}
\medskip

In the following statements we consider pointwise limits of sequences of BV functions. We show that the limit function inherits local properties which are shared by the members of the sequence. The first result is known and is given here for the readers' convenience.

\begin{theorem}\label{theorem_LimitFunc_BV}
	Let $\{f_n\}_{n=1}^\infty$ be a sequence of functions $f_n : [a,b]\rightarrow X$ that converges pointwisely to a function $f^{\infty} : [a,b]\rightarrow X$. Then
	$$
	V_{a}^{b}(f^\infty) \le \liminf_{n \to \infty} V_a^b(f_n).
	$$
In particular, if $V_{a}^{b}(f_n) \le A$ for all $n\in \N$ with some $A \in \RR$, then
	$$
	V_{a}^{b}(f^\infty) \le A.
	$$
\end{theorem}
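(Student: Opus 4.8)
The plan is to exploit the fact that the total variation is defined as a supremum over partitions, and that for each \emph{fixed} partition the partition-variation depends on only finitely many point evaluations of the function, so that it passes to the pointwise limit. This immediately delivers lower semicontinuity.

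First I would fix an arbitrary partition $\chi=\{x_0,\ldots,x_m\}$ of $[a,b]$ and consider the partition-variation $V(f^\infty,\chi)=\sum_{i=1}^m \rho(f^\infty(x_i),f^\infty(x_{i-1}))$. Since $f_n\to f^\infty$ pointwise, in particular $f_n(x_i)\to f^\infty(x_i)$ at each of the finitely many nodes $x_i$. Using the elementary estimate $|\rho(u,v)-\rho(u',v')|\le \rho(u,u')+\rho(v,v')$, which is a direct consequence of the triangle inequality and shows that $\rho$ is jointly continuous, each summand $\rho(f_n(x_i),f_n(x_{i-1}))$ converges to $\rho(f^\infty(x_i),f^\infty(x_{i-1}))$. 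As the sum has a fixed finite number of terms, this yields $\lim_{n\to\infty} V(f_n,\chi)=V(f^\infty,\chi)$.

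Next, for every $n$ we have $V(f_n,\chi)\le V_a^b(f_n)$ by the definition of the total variation. Taking $\liminf$ on both sides and inserting the limit just established gives $V(f^\infty,\chi)=\lim_{n\to\infty} V(f_n,\chi)\le \liminf_{n\to\infty} V_a^b(f_n)$. Since the right-hand side does not depend on $\chi$, I would then take the supremum over all partitions $\chi$ on the left, obtaining $V_a^b(f^\infty)=\sup_\chi V(f^\infty,\chi)\le \liminf_{n\to\infty} V_a^b(f_n)$, which is the asserted inequality. The ``in particular'' claim follows at once: if $V_a^b(f_n)\le A$ for all $n$, then $\liminf_{n\to\infty} V_a^b(f_n)\le A$, whence $V_a^b(f^\infty)\le A$.

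There is essentially no deep obstacle here; the only point needing a little care is the interchange of the limit in $n$ with the sum over the partition nodes, which is legitimate precisely because each partition is finite. This is exactly why the argument produces only lower semicontinuity and not continuity of the total variation: one cannot pass to the limit inside the supremum over \emph{all} partitions simultaneously, and indeed the reverse inequality may fail (a sequence can oscillate with large variation while converging pointwise to a function of small variation).
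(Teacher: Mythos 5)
Your proof is correct and follows essentially the same route as the paper: both arguments hinge on the fact that for a fixed finite partition the partition-variation involves only finitely many point evaluations and therefore passes to the pointwise limit, after which one takes the supremum over partitions. The only cosmetic difference is that the paper extracts a subsequence realizing the $\liminf$ and runs an explicit $\varepsilon$-estimate, whereas you invoke the continuity of $\rho$ and the monotonicity of $\liminf$ directly, which is a slightly cleaner bookkeeping of the same idea.
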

\begin{proof}
	Let $\varepsilon>0$ be arbitrarily small and let $\chi=\{ x_0, \ldots ,x_{K}\}_{}$, $a = x_0 < \cdots <x_{K}=b$, be an arbitrary partition of $[a,b]$. There exists a subsequence $\{f_{n_k}\}_{k \in \N}$ satisfying $\displaystyle \lim_{k \to \infty} V_a^b(f_{n_k}) = \liminf_{n \to \infty} V_a^b(f_n)$. For $k$ sufficiently large we have $V_a^b(f_{n_k}) <  \liminf_{n \to \infty} V_a^b(f_n) + \varepsilon$ and
$\rho(f_{n_k}  (x_i),f^\infty(x_i)) \le \frac{\varepsilon}{2K}$, $i=0,1,\ldots,K$. Therefore,
	\begin{align*}
	& V(f^\infty, \chi) = \sum_{i=1}^{K} \rho\left(f^\infty(x_i), f^\infty(x_{i-1}) \right) \\
	& \le \sum_{i=1}^{K} \rho \left(f^\infty(x_i), f_{n_k}(x_{i}) \right) + \sum_{i=1}^{K} \rho(f_{n_k}(x_i),f_{n_k}(x_{i-1}))+ \sum_{i=1}^{K} \rho(f_{n_k}(x_{i-1}),f^\infty(x_{i-1})) \\
	& \le \frac{K\varepsilon}{2K} + \liminf_{n \to \infty} V_a^b(f_n) +\eps +\frac{K\varepsilon}{2K} 
	= \liminf_{n \to \infty} V_a^b(f_n) + 2\varepsilon.
	\end{align*}
	The first claim  follows by taking the supremum over all partitions, and the second claim is an easy consequence of the first one.
\end{proof}

In the next theorem we study sequences of functions which are equicontinuous from the left or from the right at a point.

\begin{theorem}\label{theorem_LimitFunc_LeftLocalModulu}
	Let $x^*\in (a,b]$, and $\{f_n\}_{n=1}^\infty$ be a sequence of functions $f_n: [a,b]\rightarrow X$ satisfying
$\omega^-(f_n, x^*, \delta) \le \omega(\delta)$, $0< \delta \le \delta_0$, $n\in\N $, where $\omega(\delta)$ is a modulus-bounding function. If $f^\infty = \lim\limits_{n\to \infty} f_n $ pointwisely on $[x^*-\delta_0, x^*]\cap [a,b]$, then
	$$
	\omega^-(f^\infty,x^*,\delta)\le \omega(\delta),\quad 0< \delta \le \delta_0.
	$$
	In particular, if $\lim\limits_{\delta \to 0^+} \omega(\delta) =0$ then  $f^\infty$ is left continuous at $x^*$.
\end{theorem}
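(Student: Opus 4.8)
The plan is to reduce the statement about the supremum defining $\omega^-(f^\infty, x^*, \delta)$ to a pointwise estimate at each individual point, since suprema do not in general commute with pointwise limits but individual distances do. First I would fix $\delta \in (0, \delta_0]$ and an arbitrary point $x \in [x^*-\delta, x^*] \cap [a,b]$. Because $\delta \le \delta_0$, both $x$ and $x^*$ lie in $[x^*-\delta_0, x^*]\cap[a,b]$, the set on which pointwise convergence is assumed, so $f_n(x) \to f^\infty(x)$ and $f_n(x^*) \to f^\infty(x^*)$.

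Next, for every $n$ the definition of the left local modulus together with the hypothesis gives
$$
\rho(f_n(x), f_n(x^*)) \le \omega^-(f_n, x^*, \delta) \le \omega(\delta),
$$
since $x$ belongs to the interval over which the supremum defining $\omega^-(f_n, x^*, \delta)$ is taken. To transfer this uniform-in-$n$ bound to the limit function I would use the triangle inequality
$$
\rho(f^\infty(x), f^\infty(x^*)) \le \rho(f^\infty(x), f_n(x)) + \rho(f_n(x), f_n(x^*)) + \rho(f_n(x^*), f^\infty(x^*)),
$$
in which the middle term is at most $\omega(\delta)$ for all $n$, while the outer two terms tend to $0$ by pointwise convergence. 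Letting $n \to \infty$ yields $\rho(f^\infty(x), f^\infty(x^*)) \le \omega(\delta)$. Since $x$ was an arbitrary point of $[x^*-\delta, x^*]\cap[a,b]$, taking the supremum over all such $x$ gives $\omega^-(f^\infty, x^*, \delta) \le \omega(\delta)$, which is the first claim.

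For the concluding assertion, if $\lim_{\delta \to 0^+}\omega(\delta) = 0$ then the inequality just proved forces $\lim_{\delta \to 0^+}\omega^-(f^\infty, x^*, \delta) = 0$ by squeezing between $0$ and $\omega(\delta)$, and Proposition~\ref{Prop_LocalModul_Vanishes} then identifies this with left continuity of $f^\infty$ at $x^*$.

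I do not expect a genuine obstacle here; the only subtlety worth flagging is conceptual rather than technical. One cannot simply pass the supremum through the limit, because $\omega^-(f_n, x^*, \delta)$ need not converge to $\omega^-(f^\infty, x^*, \delta)$. The argument sidesteps this by extracting the bound at each fixed $x$ first, where continuity of the metric via the triangle inequality applies cleanly and no uniformity in $x$ is needed, and re-taking the supremum only at the very end.
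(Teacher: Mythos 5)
Your proposal is correct and follows essentially the same route as the paper's proof: fix a point, bound the distance $\rho(f_n(x),f_n(x^*))$ by $\omega(\delta)$ uniformly in $n$, pass to the limit via the triangle inequality, and only then take the supremum over $x$. The paper merely phrases the limiting step with an explicit $\varepsilon$ rather than ``letting $n\to\infty$,'' which is an immaterial difference.
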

\begin{proof}
Let $\delta \in (0,\delta_0]$. Fix $z\in [x^*-\delta, x^*]\cap[a,b]$. By the assumption, 
$$
\rho(f_n(z),f_n(x^*)) \le \omega^-(f_n,x^*,\delta) \le  \omega(\delta), \quad n\in\N.
$$
Let $\varepsilon>0$ be arbitrarily small. There exists $N(\varepsilon,z)$ such that
$$
\rho(f^\infty(z),f_{n}(z)) \le \frac{\varepsilon}{2}\quad \mbox{and} \quad \rho(f^\infty(x^*),f_{n}(x^*)) \le \frac{\varepsilon}{2}
$$ 
for all $n \ge N(\varepsilon,z)$. For such $n$ we have
\begin{align*}
& \rho(f^\infty(z),f^\infty(x^*)) \le  \rho(f^\infty(z),f_n(z)) + \rho(f_n(z),f_n(x^*)) + \rho(f_n(x^*),f^\infty(x^*)) \\
& \le \frac{\varepsilon}{2} + \omega(\delta) + \frac{\varepsilon}{2} = \varepsilon + \omega(\delta).
\end{align*}
Since $\varepsilon > 0$ was taken arbitrarily, it follows that  $\rho(f^\infty(z),f^\infty(x^*)) \le \omega(\delta)$. 
Thus,
$$
	\omega^-(f^\infty,x^*,\delta) = \sup \left\{\rho(f^\infty(z),f^\infty(x^*)) \  : \ z \in [x^*-\delta, x^*]\cap[a,b] \right\} \le \omega(\delta).
$$
In particular, it follows from Proposition~\ref{Prop_LocalModul_Vanishes} 
that $f^\infty$ is left continuous at $x^*$.
\end{proof}

\noindent An analogous result holds for the right continuity at $x^*$.
\medskip

Arguing along the same lines, one can also prove an analogous statement for the two-sided local modulus of continuity.

\begin{theorem}\label{Theorem_LimitFunc_LocalModulu}
	Let $x^*\in[a,b]$ and let $\, \{f_n\}_{n=1}^{\infty}$ be a sequence of functions $\, f_n :  [a,b] \to X$ satisfying  
	${\omega(f_n, x^*, \delta) \le \omega(\delta)}$,\, $0< \delta \le \delta_0$, $n \in \N$, where $\omega(\delta)$ is a modulus-bounding function. If $f^\infty = \lim\limits_{n \to \infty} f_n $ pointwisely on $[x^*-\frac{\delta_0}{2}, x^*+\frac{\delta_0}{2}]\cap [a,b]$, then
	$$
	\omega(f^\infty,x^*,\delta) \le \omega(\delta), \quad 0< \delta \le \delta_0.
	$$
	In particular, if $\lim\limits_{\delta \to 0^+} \omega(\delta) =0$ then $f^\infty$ is continuous at $x^*$.
\end{theorem}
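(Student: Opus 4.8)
The plan is to mimic the proof of Theorem~\ref{theorem_LimitFunc_LeftLocalModulu} almost verbatim, the only structural change being that the comparison with the fixed value $f(x^*)$ is replaced by a comparison between two free points in the symmetric window of half-width $\delta/2$. First I would fix $\delta\in(0,\delta_0]$ and then fix an arbitrary pair $x_1,x_2\in[x^*-\delta/2,x^*+\delta/2]\cap[a,b]$. Since $\delta\le\delta_0$, these points lie in $[x^*-\delta_0/2,x^*+\delta_0/2]\cap[a,b]$, the set on which pointwise convergence is assumed, so $f_n(x_1)\to f^\infty(x_1)$ and $f_n(x_2)\to f^\infty(x_2)$.

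The core estimate is the triangle inequality
$$
\rho(f^\infty(x_1),f^\infty(x_2)) \le \rho(f^\infty(x_1),f_n(x_1)) + \rho(f_n(x_1),f_n(x_2)) + \rho(f_n(x_2),f^\infty(x_2)).
$$
The middle term is bounded by $\LocalModulCont{f_n}{x^*}{\delta}\le\omega(\delta)$ by hypothesis, because $x_1,x_2$ belong to the window defining $\LocalModulCont{f_n}{x^*}{\delta}$. Given any $\varepsilon>0$, pointwise convergence lets me choose $n=N(\varepsilon,x_1,x_2)$ so large that the first and third terms are each at most $\varepsilon/2$; hence $\rho(f^\infty(x_1),f^\infty(x_2))\le\omega(\delta)+\varepsilon$. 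Letting $\varepsilon\to0$ gives $\rho(f^\infty(x_1),f^\infty(x_2))\le\omega(\delta)$, and taking the supremum over all admissible $x_1,x_2$ yields $\LocalModulCont{f^\infty}{x^*}{\delta}\le\omega(\delta)$ for every $\delta\in(0,\delta_0]$.

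For the final assertion, if $\omega(\delta)\to0$ as $\delta\to0^+$ then $\LocalModulCont{f^\infty}{x^*}{\delta}\to0$, and continuity of $f^\infty$ at $x^*$ follows from the classical characterization that vanishing of the two-sided local modulus is equivalent to continuity at the point; equivalently, by the inequalities in~\eqref{LocalModules-Prop1} the two one-sided moduli $\omega^-(f^\infty,x^*,\cdot)$ and $\omega^+(f^\infty,x^*,\cdot)$ also vanish, so $f^\infty$ is both left and right continuous at $x^*$ by Proposition~\ref{Prop_LocalModul_Vanishes}.

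I do not anticipate a genuine obstacle here; the only point requiring a little care is that the index $N$ depends on the chosen pair $(x_1,x_2)$, not merely on $\varepsilon$. This is harmless, because $x_1,x_2$ are fixed before the limit $n\to\infty$ is taken, and the supremum over the window is formed only at the very end, after the bound $\rho(f^\infty(x_1),f^\infty(x_2))\le\omega(\delta)$ has been established separately for each individual pair.
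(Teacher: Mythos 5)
Your proof is correct and follows exactly the route the paper intends: the paper gives no separate proof of this theorem, stating only that it is obtained "arguing along the same lines" as Theorem~\ref{theorem_LimitFunc_LeftLocalModulu}, which is precisely what you do by replacing the comparison with $f(x^*)$ by a pair of free points in the window of half-width $\delta/2$. The final reduction of continuity to the vanishing of the one-sided moduli via~\eqref{LocalModules-Prop1} and Proposition~\ref{Prop_LocalModul_Vanishes} is also sound.
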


As the last statement in this section, we formulate a property similar to Theorem~\ref{theorem_LimitFunc_LeftLocalModulu} for the local moduli of the function $v_f$.

\begin{propos}\label{Propos_Limit_v_LocalModulu}
	Let $x^*\in (a,b]$, and let and $\{f_n\}_{n=1}^\infty$ be a sequence of functions $f_n: [a,b]\rightarrow X$, $f_n \in \BV$,  satisfying
	${\omega^-(v_{f_n}, x^*, \delta) \le \omega(\delta)}$, $0< \delta  \le \delta_0$, $n\in\N $, where $\omega(\delta)$ is a modulus-bounding function. If $f^\infty = \lim\limits_{n\to \infty} f_n $ pointwisely on $[a,b]$, then
	$$
	\omega^-(v_{f^\infty},x^*,\delta)\le \omega(\delta),\quad 0< \delta \le \delta_0.
	$$
	In particular,   if $\lim\limits_{\delta \to 0^+} \omega(\delta) =0$  then  $v_{f^\infty}$ is left continuous at $x^*$. 
\end{propos}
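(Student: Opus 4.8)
The plan is to reduce the assertion to Theorem~\ref{theorem_LimitFunc_BV}, applied not on all of $[a,b]$ but on the shrinking left-neighbourhood $[\alpha_\delta, x^*]$ of $x^*$, where $\alpha_\delta := \max\{x^*-\delta, a\}$. The enabling observation is that for any $g \in \BV$ the variation function $v_g$ is monotone non-decreasing, so the supremum defining its left local modulus is attained at the left endpoint. Indeed, for $x \in [\alpha_\delta, x^*]$ we have $v_g(x) \le v_g(x^*)$, whence $|v_g(x^*) - v_g(x)| = v_g(x^*) - v_g(x) = V_x^{x^*}(g)$, and this is largest at $x = \alpha_\delta$. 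Using $V_z^{x}(g) = v_g(x) - v_g(z)$, this gives the identity
$$
\omega^-(v_g, x^*, \delta) = v_g(x^*) - v_g(\alpha_\delta) = V_{\alpha_\delta}^{x^*}(g), \qquad 0 < \delta \le \delta_0 .
$$

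First I would apply this identity to each $f_n$: the hypothesis $\omega^-(v_{f_n}, x^*, \delta) \le \omega(\delta)$ then reads as the uniform bound $V_{\alpha_\delta}^{x^*}(f_n) \le \omega(\delta)$, $n \in \N$, on the fixed interval $[\alpha_\delta, x^*]$. Since $f_n \to f^\infty$ pointwisely on $[a,b]$, and hence on $[\alpha_\delta, x^*]$, Theorem~\ref{theorem_LimitFunc_BV} applied on $[\alpha_\delta, x^*]$ yields
$$
V_{\alpha_\delta}^{x^*}(f^\infty) \le \liminf_{n \to \infty} V_{\alpha_\delta}^{x^*}(f_n) \le \omega(\delta) .
$$
In particular the local variations of $f^\infty$ near $x^*$ are finite, so $v_{f^\infty}$ is meaningful there and the identity above applies to $g = f^\infty$, giving $\omega^-(v_{f^\infty}, x^*, \delta) = V_{\alpha_\delta}^{x^*}(f^\infty) \le \omega(\delta)$, which is the claimed inequality. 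The ``in particular'' statement then follows at once: if $\lim_{\delta \to 0^+} \omega(\delta) = 0$, the bound forces $\lim_{\delta \to 0^+} \omega^-(v_{f^\infty}, x^*, \delta) = 0$, and Proposition~\ref{Prop_LocalModul_Vanishes}, applied to the real-valued function $v_{f^\infty}$, gives left continuity of $v_{f^\infty}$ at $x^*$.

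I do not expect a genuine obstacle, since the proposition is essentially a localized corollary of Theorem~\ref{theorem_LimitFunc_BV}; the real content is packaged into the reduction step. The two points deserving care are (i) verifying that the left local modulus of the monotone function $v_g$ coincides exactly with the local variation $V_{\alpha_\delta}^{x^*}(g)$, and (ii) confirming that Theorem~\ref{theorem_LimitFunc_BV} is legitimately invoked on the subinterval $[\alpha_\delta, x^*]$ --- which it is, because its proof uses only pointwise convergence and the variation bound on the interval under consideration. A minor technicality is the global well-definedness of $v_{f^\infty}$, which the statement tacitly presumes; this is in any event secured a posteriori by the finiteness of $V_{\alpha_\delta}^{x^*}(f^\infty)$ furnished by Theorem~\ref{theorem_LimitFunc_BV}.
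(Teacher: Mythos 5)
Your proposal is correct and follows essentially the same route as the paper: the paper likewise bounds $V_x^{x^*}(f^\infty)$ via Theorem~\ref{theorem_LimitFunc_BV} applied on the subinterval $[x,x^*]$ together with the monotonicity of the variation function, and then invokes Proposition~\ref{Prop_LocalModul_Vanishes} for the left-continuity claim. The only cosmetic difference is that you use monotonicity up front to reduce the supremum to the single left endpoint $\alpha_\delta$, whereas the paper runs the inequality chain for each $x$ and takes the supremum at the end.
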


\begin{proof}
	Let $x\in [x^*-\delta, x^*] \cap [a,b]$.
	By Theorem~\ref{theorem_LimitFunc_BV} and by the monotonicity of the variation function we have
	\begin{align*}
	v_{f^\infty}(x^*)-v_{f^\infty}(x) &=  V_x^{x^*} (f^\infty)\le \liminf_{n \to \infty} V_x^{x^*} (f_n) \le v_{f_n}(x^*)-v_{f_n}(x) \le \LeftLocalModul{v_{f_n}}{x^*}{\delta} \le \omega(\delta).
	\end{align*}
	Taking supremum over $x \in [x^*-\delta, x^*]\cap[a,b]$ we get the first claim. The second claim follows from Proposition~\ref{Prop_LocalModul_Vanishes}.
\end{proof}

Analogous statements hold for the right local modulus of continuity and for the two-sided local modulus of continuity.


\section {Multifunctions, their chain functions and metric selections}\label{Sect_MetricSelections}

The main object of this paper are  set-valued functions (SVFs, multifunctions) mapping $[a,b]$ to $\Comp$. First we recall some basic notions on such SVFs.

The graph of a multifunction $F$ is the set of points in $\R^{d+1}$ defined as
$$
\Graph(F)= \left \{(x,y) \ : \ y\in F(x),\; x \in [a,b] \right \}.
$$
It is easy to see that if $F \in  \BV$ then $\Graph(F)$ is a bounded set and $F$ has a bounded range, namely $\|F\|_\infty = \left| \bigcup_{x\in[a,b]} F(x) \right| <  \infty $.
We denote the class of SVFs of bounded variation with compact graphs by~$\calF$. 

For a set-valued function $F : [a,b] \to \Comp$, a single-valued function ${s:[a,b] \to \Rd}$ such that $s(x) \in F(x)$ for all $x \in [a,b]$ is called a selection of~$F$.

Below we present some definitions and results from~\cite{DFM:MetricIntegral} that will be used in this paper. In particular, we recall the definitions of chain functions and metric selections. 

Given a multifunction $F: [a,b] \to \Comp$, a partition $\chi=\{x_0,\ldots,x_n\} \subset [a,b]$, $a=x_0 < \cdots < x_n=b$, and a corresponding metric chain $\phi=(y_0,\ldots,y_n) \in \CH \left ( F(x_0),\ldots,F(x_n) \right )$ (see Definition~\ref{Def_MetChain_MetSelection}), the \textbf{chain function} based on $\chi$ and $\phi$ is
\begin{equation}\label{def_ChainFunc}
    c_{\chi, \phi}(x)= \left \{ \begin{array}{ll}
                        y_i, & x \in [x_i,x_{i+1}), \quad i=0,\ldots,n-1,\, \\
                        y_n, & x=x_n.
                    \end{array}
           \right.
\end{equation}

\begin{result}\label{Result_ChainFunct_Bounded&BV} \cite{DFM:MetricIntegral}\,  For $F \in \calF$, all chain functions satisfy $V_a^b(c_{\chi, \phi}) \le V_a^b(F)$ and $\|c_{\chi, \phi}\|_\infty \le\|F\|_\infty$.
\end{result}

A selection $s$ of $F$ is called a \textbf{metric selection}, if there is a sequence of chain functions $\{ c_{\chi_k, \phi_k} \}_{k \in \N}$ of~$F$ with ${\lim_{k \to \infty} |\chi_k| =0}$ such that
$$
s(x)=\lim_{k\to \infty} c_{\chi_k, \phi_k}(x) \quad \mbox{pointwisely on} \ [a,b].
$$
We denote the set of all metric selections of $F$ by $\mathcal{S}(F)$.

Note that the definitions of chain functions and metric selections imply that a metric selection $s$ of a multifunction $F$ is constant in any open interval where the graph of $s$ stays in the interior of  $\Graph(F)$.

\begin{result}\label{Result_MetSel_ThroughAnyPoint_Repres}\cite{DFM:MetricIntegral} \,
    Let $F \in \calF$. Through any point $\alpha \in \Graph(F)$ there exists a metric selection which we denote by~${\,s_\alpha}$.
    Moreover,  $F$ has a representation by metric selections, namely
$$
    F(x) = \{ s_\alpha(x) \ :\ \alpha \in \Graph(F)\}.
$$
\end{result}

\begin{result}\label{Result_MetrSel_InheritVariation}\cite {DFM:MetricIntegral}\,
    Let $s$ be a metric selection of $F \in \calF$. Then $V_a^b(s) \le V_a^b(F)$ and $\|s\|_\infty\le \|F\|_\infty$.
\end{result}

The next statements focus on local regularity properties of chain functions and metric selections. They refine results in~\cite{DFM:Book_SV-Approx} and~\cite{DFM:MetricIntegral}.

\begin{lemma}\label{lemma_LeftLocModuli_c_n<=LocModuli_v_f}
Let $F\in \calF$ and let $c_{\chi, \phi}$ be a chain function corresponding to a partition~$\chi$ and a metric chain~$\phi$ as in~\eqref{def_ChainFunc}. Then for any $x^*\in[a,b]$ we have
$$
\omega^{-}(c_{\chi, \phi},x^*,\delta) \le \omega^{-}(v_F,x^*,\delta+|\chi|),\quad \delta>0.
$$
\end{lemma}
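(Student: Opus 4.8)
The plan is to reduce the left-hand side to the monotone variation function $v_F$ by combining the piecewise-constant structure of $c_{\chi,\phi}$ with the metric-pair inequality $|a-b|\le\haus(A,B)$, which is immediate from the identity $\haus(A,B)=\max\{|a-b|:(a,b)\in\Pi(A,B)\}$ recalled in Section~\ref{Sect_Prelim}. Fix $x^*\in[a,b]$; the case $x^*=a$ being trivial (both sides vanish), assume $x^*\in(a,b]$. For an arbitrary $x\in[x^*-\delta,x^*]\cap[a,b]$ I would locate the indices $l\le m$ with $x\in[x_l,x_{l+1})$ and $x^*\in[x_m,x_{m+1})$ (with $m=n$ if $x^*=x_n$), so that by \eqref{def_ChainFunc} one has $c_{\chi,\phi}(x)=y_l$ and $c_{\chi,\phi}(x^*)=y_m$. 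The quantity to estimate is therefore $|y_l-y_m|$.

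The heart of the argument is the telescoping chain
$$|y_l-y_m|\le\sum_{i=l}^{m-1}|y_i-y_{i+1}|\le\sum_{i=l}^{m-1}\haus\big(F(x_i),F(x_{i+1})\big)\le\sum_{i=l}^{m-1}V_{x_i}^{x_{i+1}}(F)=v_F(x_m)-v_F(x_l),$$
where the first step is the triangle inequality, the second uses that each $(y_i,y_{i+1})$ is a metric pair of $F(x_i)$ and $F(x_{i+1})$, the third is the elementary bound of the Hausdorff distance between two values of $F$ by the variation of $F$ over the connecting interval, and the last equality is additivity of the variation together with $V_{x_i}^{x_{i+1}}(F)=v_F(x_{i+1})-v_F(x_i)$.

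It then remains to pass from $v_F(x_m)-v_F(x_l)$ to the claimed modulus. From $x_m\le x^*$ we get $v_F(x_m)\le v_F(x^*)$, and from $x<x_{l+1}\le x_l+|\chi|$ we get $x_l>x-|\chi|\ge x^*-\delta-|\chi|$; combined with $x_l\ge a$ this gives $x_l\ge\max\{a,x^*-\delta-|\chi|\}$. Since $v_F$ is non-decreasing,
$$v_F(x_m)-v_F(x_l)\le v_F(x^*)-v_F\big(\max\{a,x^*-\delta-|\chi|\}\big)=\omega^-(v_F,x^*,\delta+|\chi|).$$
Taking the supremum over $x\in[x^*-\delta,x^*]\cap[a,b]$ yields $\omega^-(c_{\chi,\phi},x^*,\delta)\le\omega^-(v_F,x^*,\delta+|\chi|)$.

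I expect the only delicate point to be the index bookkeeping in the last paragraph, which is exactly where the shift $|\chi|$ originates: because $c_{\chi,\phi}$ is constant on $[x_l,x_{l+1})$, its value at $x$ is inherited from the grid point $x_l$, which may lie up to $|\chi|$ to the left of $x$; this forces the modulus of $v_F$ to be read off at the enlarged radius $\delta+|\chi|$ rather than $\delta$. Alternatively, one could first invoke Proposition~\ref{Prop_LocalModul_f<LocalModul_v_f} to bound $\omega^-(c_{\chi,\phi},x^*,\delta)$ by $\omega^-(v_{c_{\chi,\phi}},x^*,\delta)$ and then compare $v_{c_{\chi,\phi}}$ with $v_F$, but the direct computation above seems cleaner and self-contained.
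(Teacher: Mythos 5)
Your proposal is correct and follows essentially the same route as the paper's proof: telescoping $|c_{\chi,\phi}(x^*)-c_{\chi,\phi}(x)|$ through the intermediate grid points, bounding each increment by $\haus\big(F(x_j),F(x_{j+1})\big)$ via the metric-pair property, summing into $V_{x_l}^{x_m}(F)=v_F(x_m)-v_F(x_l)$, and using the monotonicity of $v_F$ together with $x_l> x^*-\delta-|\chi|$ to absorb the grid shift into the enlarged radius $\delta+|\chi|$. The only cosmetic difference is that the paper treats the case where $x$ and $x^*$ fall in the same subinterval separately, whereas you handle it uniformly via an empty telescoping sum.
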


\begin{proof}
The claim holds trivially for $x^* = a$. So we assume that $x^* \in (a,b]$. Let $\chi=\{x_0,\ldots,x_n\}$, $a=x_0 < \cdots < x_n=b$. 
We have  $x^*\in[x_k,x_{k+1})$ for some $0 \le k \le n-1$ or $x^*=x_n=b$. Take $z \in [a,b]$ such that $x^*-\delta \le z \le x^*$. If $x_k \le z \le x^*$, then  $c_{\chi, \phi}(z) = c_{\chi, \phi}(x^*) = c_{\chi, \phi}(x_k)$, and thus $|c_{\chi, \phi}(x^*)-c_{\chi, \phi}(z)| = 0$.
Otherwise there is $i< k$  such that $x_i \le z <x_{i+1}$. By the definitions of the chain function and of the metric chain we get
\begin{align*}
	|c_{\chi, \phi}(x^*)-c_{\chi, \phi}(z)| &= |c_{\chi, \phi}(x_k)-c_{\chi, \phi}(x_i)| \le \sum_{j=i}^{k-1} |c_{\chi, \phi}(x_{j+1})-c_{\chi, \phi}(x_j)| \le	\sum_{j=i}^{k-1} \haus \big( F(x_{j+1}),F(x_j) \big).
\end{align*}
Using the definitions of the variation of $F$, of $v_F$ and of $\omega^-$, we continue the estimate:
\begin{align*}
|c_{\chi, \phi}(x^*)-c_{\chi, \phi}(z)| \le  V_{x_i}^{x_k}(F) \le V_{x_i}^{x^*}(F) = v_F(x^*)-v_F(x_i) \le \omega^{-}(v_F,x^*,x^*-x_i) \le \omega^{-}(v_F,x^*,\delta+|\chi|).
\end{align*}
Taking the supremum over $z \in [x^*-\delta, x^*]\cap[a,b]$ we obtain the claim of the lemma.
\end{proof}

\begin{lemma}\label{lemma_RightLocModuli_c_n<=LocModuli_v_f}
	Let $F\in \calF$ and let $c_{\chi, \phi}$ be a chain function corresponding to a partition~$\chi$ and a metric chain~$\phi$. Then for any $x\in[a,b]$ we have
	$$
	\omega^{+}(c_{\chi, \phi},x^*,\delta) \le 2\omega\left(v_F,x^*,2(\delta+|\chi|) \right),\quad \delta>0.
	$$
\end{lemma}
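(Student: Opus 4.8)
The plan is to follow the scheme of the proof of Lemma~\ref{lemma_LeftLocModuli_c_n<=LocModuli_v_f}, but with one genuine modification that is exactly what forces the two-sided modulus, the factor $2$ and the doubled radius on the right-hand side. Write $\chi = \{x_0, \dots, x_n\}$ with $a = x_0 < \cdots < x_n = b$. If $x^* = b$ the set $[x^*, x^*+\delta] \cap [a,b]$ reduces to $\{b\}$, so $\omega^+(c_{\chi,\phi}, x^*, \delta) = 0$ and the claim is trivial; I therefore assume $x^* \in [a,b)$. Locate $x^*$ in the partition, say $x^* \in [x_k, x_{k+1})$, so that $c_{\chi,\phi}(x^*) = y_k$ by~\eqref{def_ChainFunc}. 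Fix $x \in [x^*, x^*+\delta] \cap [a,b]$ and locate it likewise, $x \in [x_m, x_{m+1})$ (or $x = x_n = b$ with $m := n$); since $x \ge x^*$ we have $m \ge k$.

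Next I would reproduce the telescoping estimate of the left-hand lemma: by the definitions of the chain function and of a metric chain,
\[
|c_{\chi,\phi}(x) - c_{\chi,\phi}(x^*)| = |y_m - y_k| \le \sum_{j=k}^{m-1} |y_{j+1} - y_j| \le \sum_{j=k}^{m-1} \haus\big(F(x_{j+1}), F(x_j)\big) \le V_{x_k}^{x_m}(F) = v_F(x_m) - v_F(x_k),
\]
where the last equality uses $x_k \le x_m$ and the definition of the variation function. (When $m = k$ the sum is empty and the bound reads $0$.)

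The decisive difference from the left case comes now. There the value at $x^*$ was anchored at the \emph{left} endpoint $x_k \le x^*$ of its cell and the competing point lay to the left of $x^*$ as well, so the whole variation interval sat to the left of $x^*$ and a single one-sided modulus sufficed. Here the anchor $x_k$ still sits to the left of $x^*$, whereas $x_m$ sits (essentially) to the right, so the interval $[x_k, x_m]$ \emph{straddles} $x^*$. I would handle this by splitting through $x^*$,
\[
v_F(x_m) - v_F(x_k) = \big(v_F(x_m) - v_F(x^*)\big) + \big(v_F(x^*) - v_F(x_k)\big),
\]
and then bounding the positions of the two grid points: from $x^* \in [x_k, x_{k+1})$ one gets $x_k \in (x^*-|\chi|, x^*]$, and from $x_m \le x \le x^*+\delta$ together with $x < x_{m+1} \le x_m + |\chi|$ one gets $x_m \in (x^*-|\chi|, x^*+\delta]$. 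Hence both $x_k$ and $x_m$ lie within distance $\delta + |\chi|$ of $x^*$, i.e.\ inside $[x^* - \tfrac{1}{2}\cdot 2(\delta+|\chi|),\, x^* + \tfrac{1}{2}\cdot 2(\delta+|\chi|)]$, which is precisely the window appearing in the definition~\eqref{Def_ClassicalModuliContin} of the two-sided local modulus $\omega(v_F, x^*, 2(\delta+|\chi|))$. Each of the two differences on the right is therefore at most $\omega(v_F, x^*, 2(\delta+|\chi|))$, which yields $|c_{\chi,\phi}(x) - c_{\chi,\phi}(x^*)| \le 2\,\omega(v_F, x^*, 2(\delta+|\chi|))$; taking the supremum over $x \in [x^*, x^*+\delta] \cap [a,b]$ gives the lemma. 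The only real obstacle is this straddling bookkeeping---being careful that $x_m$ may in fact fall slightly to the left of $x^*$ (for instance when $x = x^*$ and $m = k$), which is exactly why I invoke the two-sided modulus rather than $\omega^+$ alone, and why the radius must be doubled so that both endpoints are captured by a single symmetric window about $x^*$.
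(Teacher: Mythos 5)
Your proof is correct and follows essentially the same route as the paper's: telescope over the partition points between $x^*$ and $x$ to bound the increment by a variation of $F$ over an interval straddling $x^*$, then split that interval at $x^*$ and bound each half, which is where the factor $2$ and the doubled radius arise. The only (cosmetic) difference is that the paper bounds the two halves by the one-sided moduli $\omega^{-}(v_F,x^*,|\chi|)$ and $\omega^{+}(v_F,x^*,\delta)$ and then converts to the two-sided modulus via~\eqref{LocalModules-Prop1}, whereas you bound each half directly by $\omega(v_F,x^*,2(\delta+|\chi|))$ using the containment of $x_k$, $x_m$, $x^*$ in the symmetric window of~\eqref{Def_ClassicalModuliContin}.
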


\begin{proof}
If $x^* = b$, then the claim holds trivially. So we assume that $x^* \in [a,b)$. Let $x^*\in[x_k,x_{k+1})$ for some $0 \le k \le n-1$.
Take $z \in [a,b]$ such that $x^* \le z \le x^*+\delta$. There is $i\ge k$ such that $x_i \le z <x_{i+1}$. By the definition of the chain function we get
\begin{align*}
& |c_{\chi, \phi}(x^*)-c_{\chi, \phi}(z)| = |c_{\chi, \phi}(x_k)-c_{\chi, \phi}(x_i)| \le \sum_{j=k}^{i-1} |c_{\chi, \phi}(x_{j+1})-c_{\chi, \phi}(x_j)| \\
& \le \sum_{j=k}^{i-1} \haus (F(x_{j+1}),F(x_j)) \le  V_{x_k}^{x_i}(F). 
\end{align*}
Using the definitions of  the variation of $F$, of the variation function $v_F$ and~\eqref{Def_ClassicalModuliContin},~\eqref{defin_LeftModul},~\eqref{defin_RightModul},~\eqref{LocalModules-Prop1}, we obtain
\begin{align*}
|c_{\chi, \phi}(x^*)-c_{\chi, \phi}(z)| &\le  V_{x_k}^{x_i}(F) \le V_{x_k}^{x^*}(F) +  V_{x^*}^{z}(F) \le \omega^{-}(v_F,x^*,|\chi|) + \omega^{+}(v_F,x^*,\delta) \\& \le \omega(v_F,x^*,2|\chi|) + \omega(v_F,x^*,2\delta) \le 2\omega \left( v_F,x^*,2(|\chi|+\delta ) \right).
\end{align*}
The claim of the lemma follows by taking the supremum over $z \in [x^*, x^*+\delta]\cap[a,b]$.
\end{proof}

\begin{lemma}\label{Lemma_LocModuli_c_n<=LocModuli_v_F}
	Let $F \in \calF$ and let $c_{\chi, \phi}$ be a chain function corresponding to a partition~$\chi$ and a metric chain~$\phi$.
Then for any $x^*\in[a,b]$ we have
	$$
	\LocalModulCont{ c_{\chi, \phi} }{x^*}{\delta} \le \LocalModulCont{v_F}{x^*}{\delta +2|\chi|}, \quad \delta > 0.
	$$
\end{lemma}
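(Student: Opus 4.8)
The plan is to mimic the proofs of Lemmas~\ref{lemma_LeftLocModuli_c_n<=LocModuli_v_f} and~\ref{lemma_RightLocModuli_c_n<=LocModuli_v_f}, exploiting that by~\eqref{Def_ClassicalModuliContin} the two-sided local modulus $\LocalModulCont{c_{\chi, \phi}}{x^*}{\delta}$ is the supremum of $|c_{\chi, \phi}(x_1) - c_{\chi, \phi}(x_2)|$ over $x_1, x_2$ in the symmetric window $[x^*-\delta/2, x^*+\delta/2]\cap[a,b]$. Fix such a pair and assume without loss of generality $x_1 \le x_2$. Since $c_{\chi, \phi}$ is piecewise constant on the partition $\chi=\{x_0,\ldots,x_n\}$, there are indices $i \le j$ with $x_1 \in [x_i, x_{i+1})$ and $x_2 \in [x_j, x_{j+1})$ (with the obvious adjustment when $x_2 = x_n = b$), so that $c_{\chi, \phi}(x_1)=y_i$ and $c_{\chi, \phi}(x_2)=y_j$. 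Telescoping along the metric chain exactly as in the earlier lemmas gives
$$
|c_{\chi, \phi}(x_1) - c_{\chi, \phi}(x_2)| = |y_i - y_j| \le \sum_{l=i}^{j-1} |y_{l+1} - y_l| \le \sum_{l=i}^{j-1} \haus\big(F(x_{l+1}), F(x_l)\big) \le V_{x_i}^{x_j}(F) = v_F(x_j) - v_F(x_i),
$$
where the last equality uses $x_i \le x_j$ and the monotonicity of $v_F$.

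The key step, and the only place where the factor $2|\chi|$ enters, is the interval bookkeeping that identifies the right-hand side as a quantity controlled by $\LocalModulCont{v_F}{x^*}{\delta+2|\chi|}$. Since $x_1 \in [x_i, x_{i+1})$ we have $x_1 - x_i < x_{i+1} - x_i \le |\chi|$, hence $x_i > x_1 - |\chi| \ge x^* - \delta/2 - |\chi|$; likewise $x_j > x_2 - |\chi| \ge x^* - \delta/2 - |\chi|$. On the other side $x_i \le x_1 \le x^* + \delta/2$ and $x_j \le x_2 \le x^* + \delta/2$. Thus both $x_i$ and $x_j$ lie in $[x^* - \delta/2 - |\chi|,\, x^* + \delta/2]\cap[a,b]$, which is contained in the symmetric window $[x^* - (\delta+2|\chi|)/2,\, x^* + (\delta+2|\chi|)/2]\cap[a,b]$ of radius $\delta/2 + |\chi|$. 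Consequently $v_F(x_j) - v_F(x_i) = |v_F(x_j) - v_F(x_i)| \le \LocalModulCont{v_F}{x^*}{\delta + 2|\chi|}$ straight from definition~\eqref{Def_ClassicalModuliContin}, and taking the supremum over all admissible $x_1, x_2$ yields the asserted bound.

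I do not expect a genuine obstacle, since the argument is structurally identical to the two one-sided lemmas already proved. The one point needing care is the symmetrization: the chain value at a point $x$ is fixed by the \emph{left} partition node of the cell containing $x$, so each of $x_1, x_2$ can drift leftward by up to $|\chi|$ when passing to $x_i, x_j$. This asymmetric $|\chi|$-shift on the left must be absorbed into a symmetric enlargement of the modulus window, forcing the radius to grow by $|\chi|$ on both sides and hence producing the argument $\delta + 2|\chi|$ rather than $\delta + |\chi|$. The degenerate cases ($x_1 = x_2$, both points in one cell, or $x^* = b$) give $|c_{\chi, \phi}(x_1) - c_{\chi, \phi}(x_2)| = 0$ and are trivially covered.
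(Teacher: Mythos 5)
Your proof is correct and follows essentially the same route as the paper's: telescope along the metric chain between the left partition nodes of the cells containing the two test points, bound by the variation of $F$, and absorb the leftward drift of at most $|\chi|$ per point into a symmetric enlargement of the window, yielding the argument $\delta+2|\chi|$. The only cosmetic difference is that the paper keeps the right endpoint of the variation at the test point $z$ itself rather than at its partition node, which changes nothing.
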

\begin{proof} Let $x,z \in [x^*-\delta/2, x^*+\delta/2] \cap [a,b]$, $x < z$. First assume that $z \ne x_n$.  In this case  there exist $k,i$ with $0 \le k \le i \le n-1$ such that $x\in[x_k,x_{k+1})$ and $z \in [x_i,x_{i+1})$. We get 
\begin{align*}
|c_{\chi, \phi}(x)-c_{\chi, \phi}(z)| &= |c_{\chi, \phi}(x_k)-c_{\chi, \phi}(x_i)| \le \sum_{j=k}^{i-1} |c_{\chi, \phi}(x_{j+1})-c_{\chi, \phi}(x_j)| \le	\sum_{j=k}^{i-1} \haus (F(x_{j+1}),F(x_j)) \\& \le  V_{x_k}^{x_i}(F) \le V_{x_k}^{z}(F) = v_F(x_k)-v_F(z) \le \LocalModulCont{v_F}{x^*}{\delta+ 2|\chi| }. 
\end{align*}	
The above inequalities hold also for $x<z=x_n$. In the case when $x=z$ this estimate is trivial.	
Taking the supremum over $x,z \in [x^*-\delta/2, x^*+\delta/2] \cap [a,b]$ we obtain $	\LocalModulCont{ c_{\chi, \phi} }{x^*}{\delta}\le \LocalModulCont{v_F}{x^*}{\delta+ 2|\chi|}$.
\end{proof}

\begin{theorem}\label{theorem_LocLeftModuli_s<=LocLeftModuli_v_f}
	Let $F\in \calF$,  $s$ be a metric selection of~$F$ and  $x^*\in[a,b]$. Then
	$$
	\omega^{-}(s,x^*,\delta) \le \omega^{-}(v_F,x^*,2\delta),\quad \delta>0.
	$$
	In particular, if $F$ is left continuous at $x^*$, then $s$ is left continuous at $x^*$.
\end{theorem}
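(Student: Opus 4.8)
The plan is to deduce the bound for the metric selection $s$ from the analogous bound for chain functions established in Lemma~\ref{lemma_LeftLocModuli_c_n<=LocModuli_v_f} by a limiting argument. By the definition of a metric selection, there is a sequence of chain functions $\{c_{\chi_k,\phi_k}\}_{k\in\N}$ of $F$ with $\lim_{k\to\infty}|\chi_k|=0$ such that $c_{\chi_k,\phi_k}(x)\to s(x)$ pointwisely on $[a,b]$. Lemma~\ref{lemma_LeftLocModuli_c_n<=LocModuli_v_f} gives, for each $k$ and each $\delta>0$, the estimate $\omega^{-}(c_{\chi_k,\phi_k},x^*,\delta)\le\omega^{-}(v_F,x^*,\delta+|\chi_k|)$.

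The case $x^*=a$ is trivial, so I would fix $x^*\in(a,b]$ and $\delta>0$. First I would fix a point $z\in[x^*-\delta,x^*]\cap[a,b]$ and use the estimate above to write $|c_{\chi_k,\phi_k}(z)-c_{\chi_k,\phi_k}(x^*)|\le\omega^{-}(v_F,x^*,\delta+|\chi_k|)$ for every $k$. Since $v_F$ is monotone non-decreasing, $\omega^{-}(v_F,x^*,\cdot)$ is non-decreasing, and because $|\chi_k|\to0$ we have $\delta+|\chi_k|\le2\delta$ for all sufficiently large $k$; for such $k$ the right-hand side is bounded by $\omega^{-}(v_F,x^*,2\delta)$. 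Passing to the limit $k\to\infty$ and using the pointwise convergence of the chain functions together with the continuity of the norm, I obtain $|s(z)-s(x^*)|\le\omega^{-}(v_F,x^*,2\delta)$. Taking the supremum over all admissible $z$ then yields the desired inequality $\omega^{-}(s,x^*,\delta)\le\omega^{-}(v_F,x^*,2\delta)$.

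For the ``in particular'' statement, if $F$ is left continuous at $x^*$ then, viewing $F$ as a map into the complete metric space $(\Comp,\haus)$ and applying Proposition~\ref{Prop_LeftContEquiv_v_f-f}, the variation function $v_F$ is left continuous at $x^*$; hence $\lim_{\delta\to0^+}\omega^{-}(v_F,x^*,2\delta)=0$ by Proposition~\ref{Prop_LocalModul_Vanishes}. The main inequality then forces $\lim_{\delta\to0^+}\omega^{-}(s,x^*,\delta)=0$, and a further application of Proposition~\ref{Prop_LocalModul_Vanishes} shows that $s$ is left continuous at $x^*$.

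The step requiring care is the passage to the limit. The bound from Lemma~\ref{lemma_LeftLocModuli_c_n<=LocModuli_v_f} carries the $k$-dependent term $|\chi_k|$, so one cannot directly invoke Theorem~\ref{theorem_LimitFunc_LeftLocalModulu} with a single modulus-bounding function valid simultaneously for all $\delta$ and all $k$. The main point is precisely that, for each fixed $\delta$, it suffices to use the tail of the sequence for which $|\chi_k|\le\delta$, and this is exactly what produces the clean factor $2\delta$ in the limit; everything else is a routine passage to the limit.
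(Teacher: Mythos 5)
Your proof is correct and follows essentially the same route as the paper: Lemma~\ref{lemma_LeftLocModuli_c_n<=LocModuli_v_f} applied to the tail of the chain-function sequence with $|\chi_k|\le\delta$, followed by a passage to the pointwise limit and the same use of Propositions~\ref{Prop_LeftContEquiv_v_f-f} and~\ref{Prop_LocalModul_Vanishes} for the continuity claim. The only cosmetic difference is that the paper delegates the limiting step to Theorem~\ref{theorem_LimitFunc_LeftLocalModulu}, whereas you unfold that argument by hand — which, as you note, is what cleanly handles the $\delta$-dependent tail.
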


\begin{proof}
	Let $s$ be a metric selection of~$F$. Then there exists a sequence of partitions $\{\chi_n\}_{n \in \N}$ with $|\chi_n| \to 0$, $n \to \infty$, and a corresponding sequence of chain functions $\{c_n\}_{n \in \N}$ such that $s(x)=\lim\limits_{n\rightarrow \infty}c_n(x)$ pointwisely for all $x\in[a,b]$. For $n$ so large that $|\chi_n|\le \delta$, we get by Lemma~\ref{lemma_LeftLocModuli_c_n<=LocModuli_v_f}
	$$
	\omega^{-}(c_n,x^*,\delta) \le \omega^{-}(v_F,x^*,\delta+|\chi_n|) \le \omega^{-}(v_F,x^*,2\delta).
	$$
Theorem~\ref{theorem_LimitFunc_LeftLocalModulu} implies
	\begin{align*}
	\omega^{-}(s,x^*,\delta) \le \omega^{-}(v_F,x^*,2\delta).
	\end{align*}
	Moreover, if $F$ is left continuous at $x^*$ then by Propositions~\ref{Prop_LeftContEquiv_v_f-f} and~\ref{Prop_LocalModul_Vanishes} we have $\omega^{-}(v_F,x^*,2\delta) \to 0$ as $\delta \to 0$. The latter implies that $s$ is left continuous at $x^*$.
\end{proof}

Using Lemma~\ref{lemma_RightLocModuli_c_n<=LocModuli_v_f} instead of Lemma~\ref{lemma_LeftLocModuli_c_n<=LocModuli_v_f} and arguing as above, we obtain

\begin{theorem}\label{theorem_LocRightModuli_s<=LocRightModuli_v_f}
	Let $F\in \calF$, $s$ be a metric selection of~$F$ and  $x^*\in[a,b]$. Then
	$$
	\omega^{+}(s,x^*,\delta) \le 2\omega(v_F,x^*,4\delta), \quad \delta>0.
	$$
\end{theorem}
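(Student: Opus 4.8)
The plan is to mimic the proof of Theorem~\ref{theorem_LocLeftModuli_s<=LocLeftModuli_v_f}, replacing each left-sided ingredient with its right-sided counterpart. By the definition of a metric selection there is a sequence of partitions $\{\chi_n\}_{n\in\N}$ with $|\chi_n|\to 0$ and a corresponding sequence of chain functions $\{c_n\}_{n\in\N}$ such that $s(x)=\lim_{n\to\infty}c_n(x)$ pointwisely on $[a,b]$; in particular this holds on any interval $[x^*,x^*+\delta_0]\cap[a,b]$.

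Fix $\delta>0$. The first step is to bound the right local modulus of each chain function uniformly in $n$. By Lemma~\ref{lemma_RightLocModuli_c_n<=LocModuli_v_f},
$$
\omega^{+}(c_n,x^*,\delta) \le 2\omega\bigl(v_F,x^*,2(\delta+|\chi_n|)\bigr),\quad n\in\N.
$$
For all $n$ large enough that $|\chi_n|\le \delta$ we have $2(\delta+|\chi_n|)\le 4\delta$, and since the two-sided local modulus $\omega(v_F,x^*,\cdot)$ is non-decreasing in its last argument, this yields
$$
\omega^{+}(c_n,x^*,\delta) \le 2\omega(v_F,x^*,4\delta).
$$

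The second step is to pass to the limit. Set $\widetilde{\omega}(\delta)=2\omega(v_F,x^*,4\delta)$; this is a modulus-bounding function, being non-negative and non-decreasing. The previous display shows $\omega^{+}(c_n,x^*,\delta)\le\widetilde{\omega}(\delta)$ for all sufficiently large $n$, so, discarding the finitely many initial terms, I would apply the right-sided analogue of Theorem~\ref{theorem_LimitFunc_LeftLocalModulu} (the one stated immediately after its proof) to the sequence $\{c_n\}$ and its pointwise limit $s$. This gives $\omega^{+}(s,x^*,\delta)\le\widetilde{\omega}(\delta)=2\omega(v_F,x^*,4\delta)$, which is the assertion.

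The only point requiring care is the uniformity needed to invoke the limit theorem: the bound furnished by Lemma~\ref{lemma_RightLocModuli_c_n<=LocModuli_v_f} depends on $|\chi_n|$, so one must first absorb the $|\chi_n|$ term into a single $n$-independent modulus-bounding function before taking limits. This is exactly what the monotonicity of $\omega(v_F,x^*,\cdot)$ together with the condition $|\chi_n|\le\delta$ accomplishes; past this observation the argument is a routine right-sided transcription of the left-sided case.
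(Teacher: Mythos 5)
Your proposal is correct and is essentially the paper's own argument: the paper proves this theorem by exactly the route you describe, namely invoking Lemma~\ref{lemma_RightLocModuli_c_n<=LocModuli_v_f} for the chain functions with $|\chi_n|\le\delta$ and then passing to the pointwise limit via the right-sided analogue of Theorem~\ref{theorem_LimitFunc_LeftLocalModulu}. Your remark about absorbing the $|\chi_n|$ term into an $n$-independent bound before taking limits is the same (implicit) step the paper takes in the left-sided case.
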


Similarly, Lemma~\ref{Lemma_LocModuli_c_n<=LocModuli_v_F} and Theorem~\ref{Theorem_LimitFunc_LocalModulu} lead to 

\begin{theorem}\label{Theorem_LocModuli_s<=LocModuli_v_F}
	Let $F \in \calF$, $s$ be a metric selection of $F$  and $x^*\in[a,b]$. Then
	$$
	\LocalModulCont{s}{x^*}{\delta} \le \LocalModulCont{v_F}{x^*}{ 2\delta}, \quad \delta >0.
	$$
	In particular, if $F$ is continuous at $x^*$, then $s$ is continuous at $x^*$.
\end{theorem}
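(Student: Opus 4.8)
The plan is to transcribe, almost verbatim, the template already used in the proofs of Theorems~\ref{theorem_LocLeftModuli_s<=LocLeftModuli_v_f} and~\ref{theorem_LocRightModuli_s<=LocRightModuli_v_f}, replacing the one-sided ingredients by their two-sided counterparts. Since $s$ is a metric selection of $F$, by definition there exist a sequence of partitions $\{\chi_n\}_{n\in\N}$ with $|\chi_n|\to 0$ and a corresponding sequence of chain functions $\{c_n\}_{n\in\N}$ such that $s(x)=\lim_{n\to\infty}c_n(x)$ pointwisely on $[a,b]$. The whole argument then consists in passing the chain-function estimate of Lemma~\ref{Lemma_LocModuli_c_n<=LocModuli_v_F} to the limit by means of Theorem~\ref{Theorem_LimitFunc_LocalModulu}.

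First I would fix $\delta>0$ and pass to the tail of the sequence. For every $n$ large enough that $|\chi_n|\le\delta/2$, Lemma~\ref{Lemma_LocModuli_c_n<=LocModuli_v_F} together with the monotonicity of $\omega(v_F,x^*,\cdot)$ in its last argument gives
$$
\omega(c_n,x^*,\delta)\le\omega(v_F,x^*,\delta+2|\chi_n|)\le\omega(v_F,x^*,2\delta).
$$
The factor $2$ in the last argument is precisely what absorbs the term $2|\chi_n|$ once $|\chi_n|$ drops below $\delta/2$, and this is the only place where the geometry of the chain function enters.

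Next I would invoke Theorem~\ref{Theorem_LimitFunc_LocalModulu}, applied to the tail $\{c_n\}$ (which still converges pointwisely to $s$ on $[x^*-\delta/2,x^*+\delta/2]\cap[a,b]$) with the constant modulus-bounding function $t\mapsto\omega(v_F,x^*,2\delta)$ on $(0,\delta]$: by monotonicity of the local modulus in its last argument, $\omega(c_n,x^*,t)\le\omega(c_n,x^*,\delta)\le\omega(v_F,x^*,2\delta)$ for $0<t\le\delta$, so the theorem yields $\omega(s,x^*,\delta)\le\omega(v_F,x^*,2\delta)$. As $\delta>0$ was arbitrary, this proves the claimed inequality. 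For the ``in particular'' assertion, suppose $F$ is continuous at $x^*$. Then $v_F$ is continuous at $x^*$ by Proposition~\ref{Corol_ContEquiv_v_f-f}, so its two-sided local modulus satisfies $\omega(v_F,x^*,2\delta)\to0$ as $\delta\to0^+$ (the vanishing of the two-sided local modulus is equivalent to continuity, which follows from Proposition~\ref{Prop_LocalModul_Vanishes} through the relations~\eqref{LocalModules-Prop1}). The inequality just established then forces $\omega(s,x^*,\delta)\to0$, i.e. $s$ is continuous at $x^*$.

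I do not expect a genuine obstacle, as the argument is a mechanical adaptation of the two preceding proofs. The only point requiring a little care is the matching of arguments in the limit theorem: Lemma~\ref{Lemma_LocModuli_c_n<=LocModuli_v_F} controls $\omega(c_n,x^*,\delta)$ by $\omega(v_F,x^*,2\delta)$ only after $|\chi_n|\le\delta/2$, so the limit theorem must be applied at each fixed $\delta$ to the tail of the sequence rather than uniformly over a whole range of $\delta$ — exactly as is done in the left-sided proof of Theorem~\ref{theorem_LocLeftModuli_s<=LocLeftModuli_v_f}.
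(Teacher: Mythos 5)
Your proof is correct and follows exactly the route the paper intends: it instantiates the remark ``Similarly, Lemma~\ref{Lemma_LocModuli_c_n<=LocModuli_v_F} and Theorem~\ref{Theorem_LimitFunc_LocalModulu} lead to Theorem~\ref{Theorem_LocModuli_s<=LocModuli_v_F}'' by mimicking the proof of Theorem~\ref{theorem_LocLeftModuli_s<=LocLeftModuli_v_f}, with the correct threshold $|\chi_n|\le\delta/2$ to absorb the $2|\chi_n|$ term. Your attention to applying the limit theorem only to the tail of the sequence matches what the paper does implicitly in the one-sided case.
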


\begin{remark}\label{remark_ImproveModuliEstimates}
	Analysing the proofs, it is not difficult to see that the estimates in Theorems~\ref{theorem_LocLeftModuli_s<=LocLeftModuli_v_f}--\ref{Theorem_LocModuli_s<=LocModuli_v_F} can be improved  in the following way 
	$$
	\omega^{-}(s,x^*,\delta) \le \omega^{-}(v_F,x^*,\delta+\varepsilon), \quad \omega^{+}(s,x^*,\delta) \le 2\omega(v_F,x^*,2\delta+ \varepsilon), 
	\quad \LocalModulCont{s}{x^*}{\delta} \le \LocalModulCont{v_F}{x^*}{ \delta + \varepsilon}, 
	\quad \delta > 0,
	$$
	with an arbitrarily small $\varepsilon >0$. Taking the supremum of the both sides of the last inequality over $x^* \in [a,b]$ we obtain
	$$
	\ModulCont{s}{\delta} \le \ModulCont{v_F}{\delta+\varepsilon}.
	$$
	If $F\in \CBV$, then $v_F \in \CBV$ and $\omega(v_F, \delta)$  is continuous in $\delta$. Taking the limit as $\varepsilon \to 0+$ we get 
	$$
\ModulCont{s}{\delta} \le\ModulCont{v_F}{\delta}.
$$	
 Therefore also $s\in \CBV$.
\end{remark}

\begin{lemma}\label{lemma_Var(c_n)<LocalMod(v_F)}
Let $F\in \calF$ and let $c_{\chi, \phi}$ be a chain function corresponding to a partition $\chi$ and a metric chain~$\phi$. Let $\delta > 0$ be such that $[a+\delta+|\chi|, b-\delta] \ne \emptyset$. Then for any $x\in[a+\delta+|\chi|, b-\delta]$ we have
$$
V_{x-\delta}^{x+\delta}(c_{\chi, \phi}) \le V_{x-\delta-|\chi|}^{x+\delta}(F) \le \omega \left(v_F,x,2(\delta+|\chi|) \right).
$$
\end{lemma}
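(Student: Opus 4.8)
The plan is to use the fact that a chain function is piecewise constant, so that its variation over $[x-\delta,x+\delta]$ is simply a sum of jump magnitudes at partition points, and then to compare this sum with the variation of $F$ over a slightly enlarged interval. Write $\chi=\{x_0,\ldots,x_n\}$, $a=x_0<\cdots<x_n=b$, and $\phi=(y_0,\ldots,y_n)$, so that by \eqref{def_ChainFunc} the function $c_{\chi,\phi}$ is the constant $y_i$ on each $[x_i,x_{i+1})$ and jumps by $|y_{i+1}-y_i|$ at the breakpoint $x_{i+1}$. First I would note that, being a step function, $c_{\chi,\phi}$ has variation on $[x-\delta,x+\delta]$ bounded by the sum of its jump magnitudes taken over the breakpoints lying in that interval. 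Since $\phi$ is a metric chain, $(y_i,y_{i+1})\in\Pair{F(x_i)}{F(x_{i+1})}$, and hence $|y_{i+1}-y_i|\le\haus(F(x_i),F(x_{i+1}))$ for every $i$.

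The geometric heart of the first inequality is the location of the contributing subintervals. If a breakpoint $x_{i+1}$ lies in $[x-\delta,x+\delta]$, then its predecessor satisfies $x_i\ge x_{i+1}-|\chi|\ge x-\delta-|\chi|$, while $x_{i+1}\le x+\delta$; thus every contributing consecutive pair $(x_i,x_{i+1})$ is contained in $[x-\delta-|\chi|,x+\delta]$. These pairs have disjoint interiors and their endpoints form an increasing sub-partition of $[x-\delta-|\chi|,x+\delta]$, so summing the previous bound gives
$$
V_{x-\delta}^{x+\delta}(c_{\chi,\phi})\le\sum\haus(F(x_i),F(x_{i+1}))\le V_{x-\delta-|\chi|}^{x+\delta}(F),
$$
the last step being the definition of total variation of $F$ (in the Hausdorff metric) applied to a sub-partition. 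I expect this bookkeeping to be the only delicate point: one must identify precisely which jumps fall inside $[x-\delta,x+\delta]$ and see why the left endpoint must be pushed out by $|\chi|$, namely because the predecessor $x_i$ of a breakpoint close to $x-\delta$ may lie up to $|\chi|$ further to the left.

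For the second inequality I would simply pass to the variation function. Since $x\in[a+\delta+|\chi|,b-\delta]$, both $x-\delta-|\chi|$ and $x+\delta$ lie in $[a,b]$, and therefore $V_{x-\delta-|\chi|}^{x+\delta}(F)=v_F(x+\delta)-v_F(x-\delta-|\chi|)$. As $v_F$ is monotone non-decreasing, this difference is nonnegative and equals $|v_F(x+\delta)-v_F(x-\delta-|\chi|)|$. Both arguments $x+\delta$ and $x-\delta-|\chi|$ belong to $[x-(\delta+|\chi|),x+(\delta+|\chi|)]\cap[a,b]$, so by the definition \eqref{Def_ClassicalModuliContin} of the local modulus of continuity (with half-width $\delta+|\chi|$) this quantity is at most $\omega(v_F,x,2(\delta+|\chi|))$, which completes the proof.
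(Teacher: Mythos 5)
Your proof is correct and follows essentially the same route as the paper: both arguments reduce the variation of the piecewise-constant chain function on $[x-\delta,x+\delta]$ to a sum of jumps $|y_{i+1}-y_i|\le \haus(F(x_i),F(x_{i+1}))$ over the breakpoints in that interval, observe that the predecessor of the leftmost contributing breakpoint may lie up to $|\chi|$ to the left of $x-\delta$ (hence the enlarged interval $[x-\delta-|\chi|,x+\delta]$), and then pass to $v_F$ and the local modulus with half-width $\delta+|\chi|$. The only cosmetic difference is that the paper indexes the extremal partition points $x_{i-1}$ and $x_k$ explicitly and treats the degenerate case of no interior breakpoint separately, whereas you phrase the same bookkeeping in terms of which jumps contribute.
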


\begin{proof}
Let $\chi=\{x_0,\ldots,x_n\}$, $a=x_0 < \cdots < x_n=b$.
	By definition, $\left( c_{\chi, \phi}(x_j),c_{\chi, \phi}(x_{j+1}) \right) \in \Pair{F(x_j)}{F(x_{j+1})}$, $j=0,\ldots ,n-1$. Thus, $V_{x_i}^{x_k}(c_{\chi, \phi})\le V_{x_i}^{x_k}(F)$ for all $0\le i < k \le n$.
	If $x_k \le x-\delta < x+\delta  < x_{k+1}$, then $c_{\chi, \phi}(t) = c_{\chi, \phi}(x_k)$ for all $t \in [x-\delta, x+ \delta]$, and therefore $ V_{x-\delta}^{x+\delta}(c_{\chi, \phi})=0 $.
	In the case when $x_{i-1} \le x-\delta < x_i < \cdots < x_k \le x+\delta  < x_{k+1}$ we have   ${c_{\chi, \phi}(x-\delta)=c_{\chi, \phi}(x_{i-1})}$, ${c_{\chi, \phi}(x+\delta)=c_{\chi, \phi}(x_{k})}$. Thus, 
	$$
	V_{x-\delta}^{x+\delta}(c_{\chi, \phi})  =  V_{x_{i-1}}^{x_k}(c_{\chi, \phi})\le V_{x_{i-1}}^{x_k}(F) \le V_{x-\delta-|\chi|}^{x+\delta}(F).
	$$
For the second inequality, we continue the estimate as follows:
$$
V_{x-\delta}^{x+\delta}(c_{\chi, \phi})  \le V_{x-\delta-|\chi|}^{x+\delta}(F) = v_F(x + \delta) - v_F(x - \delta - |\chi|) \le \omega \left(v_F,x,2(\delta+|\chi|) \right).
$$
\end{proof}

\begin{theorem}\label{theorem_Var(s)<LocalMod(v_F)}
	Let $F\in \calF$ and let $s$ be a metric selection of~$F$. Then for all small $\delta > 0$ and all $x\in[a+2\delta, b-\delta]$ we have
	$$
	V_{x-\delta}^{x+\delta}(s) \le V_{x-2\delta}^{x+\delta}(F) \le \omega\left (v_F,x,4\delta \right).
	$$
\end{theorem}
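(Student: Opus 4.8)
The plan is to obtain the result as the limiting version of Lemma~\ref{lemma_Var(c_n)<LocalMod(v_F)}, in exactly the same spirit in which Theorems~\ref{theorem_LocLeftModuli_s<=LocLeftModuli_v_f}--\ref{Theorem_LocModuli_s<=LocModuli_v_F} were deduced from their chain-function counterparts. First I would invoke the definition of a metric selection: there is a sequence of chain functions $c_n = c_{\chi_n,\phi_n}$ with $|\chi_n|\to 0$ such that $s(x)=\lim_{n\to\infty}c_n(x)$ pointwise on $[a,b]$, hence in particular pointwise on the subinterval $[x-\delta,x+\delta]$.

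The central step is to transfer a variation bound from the $c_n$ to $s$ via the lower semicontinuity of the total variation under pointwise convergence. Applying Theorem~\ref{theorem_LimitFunc_BV} to the restrictions of $\{c_n\}$ and of $s$ to the interval $[x-\delta,x+\delta]$ gives
$$
V_{x-\delta}^{x+\delta}(s) \le \liminf_{n\to\infty} V_{x-\delta}^{x+\delta}(c_n).
$$
It then remains to bound the right-hand variations uniformly for large $n$. Since $x\ge a+2\delta$, once $n$ is large enough that $|\chi_n|\le\delta$ we have $x\ge a+\delta+|\chi_n|$, so $x\in[a+\delta+|\chi_n|,\, b-\delta]$ and Lemma~\ref{lemma_Var(c_n)<LocalMod(v_F)} applies, yielding $V_{x-\delta}^{x+\delta}(c_n)\le V_{x-\delta-|\chi_n|}^{x+\delta}(F)$.

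The one subtlety — and essentially the only place that requires care — is that $v_F$ need not be left continuous at $x-\delta$, so I would \emph{not} try to let $|\chi_n|\to 0$ inside $V_{x-\delta-|\chi_n|}^{x+\delta}(F)$ and recover $V_{x-\delta}^{x+\delta}(F)$. Instead I would use the monotonicity of $v_F$: for $|\chi_n|\le\delta$ we have $x-\delta-|\chi_n|\ge x-2\delta$, hence
$$
V_{x-\delta-|\chi_n|}^{x+\delta}(F)=v_F(x+\delta)-v_F(x-\delta-|\chi_n|)\le v_F(x+\delta)-v_F(x-2\delta)=V_{x-2\delta}^{x+\delta}(F).
$$
Combining this uniform-in-$n$ bound with the displayed $\liminf$ inequality gives the first asserted inequality $V_{x-\delta}^{x+\delta}(s)\le V_{x-2\delta}^{x+\delta}(F)$.

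Finally, the second inequality is immediate from the definition~\eqref{Def_ClassicalModuliContin} of the two-sided local modulus: both endpoints $x-2\delta$ and $x+\delta$ lie in $[x-2\delta,x+2\delta]\cap[a,b]$, so $V_{x-2\delta}^{x+\delta}(F)=|v_F(x+\delta)-v_F(x-2\delta)|\le \omega(v_F,x,4\delta)$. The main obstacle is thus purely bookkeeping: checking that the admissibility range $x\in[a+\delta+|\chi_n|,\,b-\delta]$ demanded by Lemma~\ref{lemma_Var(c_n)<LocalMod(v_F)} holds for all large $n$, and replacing the non-convergent quantity $v_F(x-\delta-|\chi_n|)$ by the monotone lower bound $v_F(x-2\delta)$ rather than attempting a genuine limit.
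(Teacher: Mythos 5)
Your proposal is correct and follows essentially the same route as the paper: pass to the defining sequence of chain functions, apply Lemma~\ref{lemma_Var(c_n)<LocalMod(v_F)} for $n$ large enough that $|\chi_n|\le\delta$, bound $V_{x-\delta-|\chi_n|}^{x+\delta}(F)$ by $V_{x-2\delta}^{x+\delta}(F)$ via monotonicity of $v_F$, and conclude with the lower semicontinuity of the variation from Theorem~\ref{theorem_LimitFunc_BV}. Your explicit remark about not letting $|\chi_n|\to 0$ inside the variation bound is a sensible clarification of a point the paper handles silently.
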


\begin{proof}
Since $s$ is a metric selection, there exists a sequence of partitions $\{\chi_n \}_{n \in \N}$ with $|\chi_n| \to 0$, $n \to \infty$, and a corresponding sequence of chain functions $\{c_n\}_{n \in \N}$ such that $s(x)=\lim\limits_{n\to \infty}c_n(x)$ pointwisely. Take $n$ so large that $|\chi_n| < \delta$, then by Lemma~\ref{lemma_Var(c_n)<LocalMod(v_F)}   we have $ { V_{x-\delta}^{x+\delta}(c_n) \le V_{x-\delta-|\chi_n|}^{x+\delta}(F) \le V_{x-2\delta}^{x+\delta}(F) } $. In view of Theorem~\ref{theorem_LimitFunc_BV} we get
$V_{x-\delta}^{x+\delta}(s) \le V_{x-2\delta}^{x+\delta}(F) \le \omega\left (v_F,x,4\delta \right) $.
\end{proof}

The statement of Theorem~\ref{theorem_Var(s)<LocalMod(v_F)} can be improved in the same manner like in Remark~\ref{remark_ImproveModuliEstimates}. Namely,  the estimate
$$
	V_{x-\delta}^{x+\delta}(s) \le V_{x-\delta-\varepsilon}^{x+\delta}(F) \le \omega \left (v_F,x,2\delta+ \varepsilon \right)
$$
holds with an arbitrarily small $\varepsilon > 0$.

The next result was announced in~\cite[Lemma~3.9]{DFM:MetricIntegral} without a detailed proof. Although the result is intuitively clear, its proof is rather complicated. We present the full proof in Appendix A. 

\begin{theorem}\label{theorem_PW-limit_of_MS_isMS}
	For $F\in \calF$, the pointwise limit of a sequence of metric selections of~$F$ is a metric selection of $F$.
\end{theorem}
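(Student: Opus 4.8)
The plan is to show that the pointwise limit $s=\lim_{n\to\infty}s_n$ is itself a metric selection of $F$ by exhibiting a \emph{single} sequence of chain functions of $F$, with mesh tending to zero, that converges to $s$ pointwise on all of $[a,b]$; this is exactly what the definition of a metric selection requires. I would first record the cheap structural facts. Since each $s_n$ is a selection, $s_n(x)\in F(x)$, and because $F(x)$ is compact (hence closed) the limit satisfies $s(x)\in F(x)$, so $s$ is a selection of $F$. By Result~\ref{Result_MetrSel_InheritVariation} we have $V_a^b(s_n)\le V_a^b(F)$ for every $n$, and Theorem~\ref{theorem_LimitFunc_BV} then yields $s\in\BV$ with $V_a^b(s)\le V_a^b(F)$; in particular $s$ has at most countably many discontinuities.

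Next I would fix a countable exceptional set and diagonalize. Let $D\subset[a,b]$ be countable and dense, chosen to contain the endpoints $a,b$, all discontinuity points of $s$, and all discontinuity points of the monotone variation function $v_F$ (each of these sets is countable). Enumerate $D=\{t_1,t_2,\dots\}$. For each $n$, since $s_n$ is a metric selection there are chain functions $c_{n,k}$ of $F$ with $|\chi_{n,k}|\to0$ and $c_{n,k}\to s_n$ pointwise as $k\to\infty$. A diagonal choice produces chain functions $\tilde c_n:=c_{n,k(n)}$ with $|\chi_n|<1/n$ and $|\tilde c_n(t_j)-s_n(t_j)|<1/n$ for $j=1,\dots,n$. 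For a fixed $t_j\in D$ and $n\ge j$ we then get $|\tilde c_n(t_j)-s(t_j)|\le 1/n+|s_n(t_j)-s(t_j)|\to0$, so $\tilde c_n\to s$ pointwise on $D$ while $|\chi_n|\to0$.

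The main work — and, I expect, the genuine obstacle — is upgrading convergence on the dense set $D$ to convergence at every point, since two BV functions agreeing on a dense set may still disagree at points of discontinuity. Here I would exploit the uniform variation bound for chain functions: by Result~\ref{Result_ChainFunct_Bounded&BV}, localized to subintervals, each $\tilde c_n$ satisfies $V_{t'}^{t''}(\tilde c_n)\le V_{t'}^{t''}(F)=v_F(t'')-v_F(t')$. Fix $t\in[a,b]\setminus D$; then $t$ is a continuity point of both $s$ and $v_F$. Given $\eps>0$, density of $D$ lets me pick $t',t''\in D$ with $t'<t<t''$, $v_F(t'')-v_F(t')<\eps$, and $|s(t')-s(t)|<\eps$. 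Then
$$|\tilde c_n(t)-\tilde c_n(t')|\le V_{t'}^{t''}(\tilde c_n)\le v_F(t'')-v_F(t')<\eps.$$
Combining this with $|\tilde c_n(t)-s(t)|\le|\tilde c_n(t)-\tilde c_n(t')|+|\tilde c_n(t')-s(t')|+|s(t')-s(t)|$ and letting $n\to\infty$ for the now-fixed $t'\in D$ gives $\limsup_n|\tilde c_n(t)-s(t)|\le 2\eps$; as $\eps$ is arbitrary, $\tilde c_n(t)\to s(t)$.

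Thus $\{\tilde c_n\}$ is a sequence of chain functions of $F$ with mesh tending to zero that converges to $s$ pointwise on $[a,b]$, and $s$ is a metric selection by definition. I would stress that the delicate point is the construction of $D$: it is essential to include the discontinuities of $v_F$ and not merely those of $s$, because it is precisely the continuity of $v_F$ off $D$ that forces the variation of the chain functions to collapse near the off-set points $t$, even at points where $F$ itself may be discontinuous while $s$ is continuous. Controlling the behaviour of the approximating chain functions at such points is, I anticipate, where a careful and lengthier argument is unavoidable.
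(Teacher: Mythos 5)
Your overall strategy is sound and, in its final step, genuinely different from the paper's. Both proofs begin the same way: the limit is a selection by closedness of the values, $V_a^b(s)\le V_a^b(F)$ via Theorem~\ref{theorem_LimitFunc_BV}, a countable exceptional set $D$ containing the relevant discontinuities, and a diagonal choice of chain functions $\tilde c_n$ with $|\chi_n|\to 0$ converging to $s$ on $D$. But where the paper then invokes Helly's Selection Principle to extract a subsequence of the diagonal chain functions converging everywhere and identifies its limit with $s$ off $D$ via Lemma~\ref{Lemma_LocModuli_c_n<=LocModuli_v_F}, you prove convergence of the full diagonal sequence at every $t\notin D$ directly, by collapsing the local variation of the chain functions near $t$ using the continuity of $v_F$ there. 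This buys a cleaner conclusion (no Helly, no passage to a further subsequence) and makes explicit why the discontinuities of $v_F$ --- not merely those of $s$ --- must be absorbed into $D$.

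One step needs repair. The inequality $V_{t'}^{t''}(\tilde c_n)\le V_{t'}^{t''}(F)$ is not a valid ``localization'' of Result~\ref{Result_ChainFunct_Bounded&BV}: the value of a chain function at $t'$ is the value attached to the largest partition point $x_i\le t'$, so its first jump inside $(t',t'']$ is controlled by $\haus(F(x_i),F(x_{i+1}))$ with $x_i$ possibly strictly to the left of $t'$. For instance, if $F\equiv\{0\}$ on $[a,\tau)$ and $F\equiv\{1\}$ on $[\tau,b]$ with $x_i<\tau\le t'<x_{i+1}<t''$, then $V_{t'}^{t''}(\tilde c_n)=1$ while $V_{t'}^{t''}(F)=0$. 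The correct localized bound is the paper's Lemma~\ref{lemma_Var(c_n)<LocalMod(v_F)}: $V_{t'}^{t''}(\tilde c_n)\le V_{t'-|\chi_n|}^{t''}(F)=v_F(t'')-v_F(t'-|\chi_n|)$. Your argument survives with a small adjustment: since $t\notin D$ is a continuity point of $v_F$, first pick $\eta>0$ with $v_F(t+\eta)-v_F(t-\eta)<\eps$, then choose $t',t''\in D\cap(t-\eta/2,\,t+\eta/2)$ with $t'<t<t''$; for $n$ large enough that $|\chi_n|<\eta/2$ the corrected bound gives $V_{t'}^{t''}(\tilde c_n)\le v_F(t+\eta)-v_F(t-\eta)<\eps$, and the rest of your three-term estimate goes through unchanged.
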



\section {Weighted metric integral}\label{Sect_MetricIntegral}

The well-known Aumann integral~\cite{AUM:65} of a multifunction~$F$ is defined as
\begin{equation}\label{Def_AumannIntegral}
\int_a^b F(x)dx = \left\{\int_a^b s(x)dx \ : \ s\ \mbox{is an integrable selection of}\ F \right\}.
\end{equation}
Everywhere in this context we understand the integral of a function $f : [a,b] \to \Rd$ to be applied to each component of $f$.

It is known that the Aumann integral is convex for each function $F \in \calF$, even if the values of $F$ are not convex. Moreover, 
\begin{equation}\label{Prop_AumannIntegral}
\int_a^b F(x)dx = \int_a^b \co \big( F(x) \big) dx, \quad \int_a^b w(x)Adx =\left (\int_a^b w(x)dx \right ) \; \co(A),
\end{equation}
where $A \in \Comp$ and $w(x) \ge 0$, $x \in [a,b]$.

The metric integral of SVFs has been introduced in~\cite{DFM:MetricIntegral}. In contrast to the Aumann integral, the metric integral is free of the undesired effect of the convexification. We recall its definition. First we define the metric Riemann sums. For a multifunction $\Map{[a,b]}$ and for a partition $\chi=\{x_0,\ldots,x_n\}$,  ${a=x_0<x_1<\cdots < x_n=b}$, the metric Riemann sum of $F$ is defined by
	$$
    {\scriptstyle(\cal M)} S_{\chi} F = \bigoplus_{i=0}^{n-1}(x_{i+1}-x_i)F(x_i).
	$$

\begin{defin}\label{Def_MetricIntegral} \cite{DFM:MetricIntegral}
The metric integral of $F$ is defined as the Kuratowski upper limit of metric Riemann sums corresponding to partitions with norms tending to zero, namely, 
$$
\MetInt =  \limsup_{|\chi| \to 0} {\scriptstyle(\cal M)} S_{\chi} F.
$$
The upper limit here is understood in the following sense: $y \in  \limsup_{|\chi| \to 0} {\scriptstyle(\cal M)} S_{\chi} F$ if there is a sequence of partitions $\{ \chi_n\}_{n \in \N}$ with $|\chi_n| \to 0$, $n \to \infty$, and a sequence $\{y_n\}_{n \in \N}$ such that $y_n \in {\scriptstyle(\cal M)} S_{\chi_n} F$ and $y_n \to y$, $n \to \infty$.
\end{defin}
It is easy to see that the set $\MetInt$ is non-empty if $F$ has a bounded range.

The following result from~\cite{DFM:MetricIntegral}  relates the metric integral of $F\in \calF$ to its metric selections.
\begin{result}\label{Result_MetrInt=IntOfMetrSel} \cite{DFM:MetricIntegral}
	Let $F \in \calF$. Then
	$
		\MetInt=\left \{ \IntSel {s} \ :\ s \in\mathcal{S}(F) \right \}.
	$
\end{result}

In this section we define an extension of the metric integral, namely,  the weighted metric integral. 

For a set-valued function $\Map{[a,b]}$, a weight function $k:[a,b] \to \R$ and for a partition $\chi=\{x_0,\ldots,x_n\}$, ${a=x_0<x_1<\cdots < x_n=b}$, we define the\textbf{ weighted metric Riemann sum} of $F$ by

\begin{align*}
{\scriptstyle(\cal M_k)} S_{\chi} F &= \left\{ \sum_{i=0}^{n-1} (x_{i+1}-x_i)k(x_i) y_i \ : \ (y_0,\ldots,y_{n-1}) \in \CH(F(x_0),\ldots,F(x_{n-1})) \right\} \\
&= \bigoplus_{i=0}^{n-1}(x_{i+1}-x_i)k(x_i) F(x_i).
\end{align*}

\begin{remark}\label{Remark_WeightedMetrSum}
	The elements of $ {\scriptstyle(\cal M_k)} S_{\chi} F $ are of the form $ \int_{a}^{b} k_\chi(x) c_{\chi,\phi}(x) dx $, where $c_{\chi, \phi}$ is a chain function based on the partition $\chi$ and a  metric chain $\phi  = (y_0, \ldots, y_n) \in CH\left (F(x_0), \ldots,F(x_{n})\right )$, and $k_\chi$ the piecewise constant function defined by
	\begin{equation}\label{weight-step-function}
	k_\chi(x)= \left \{ \begin{array}{ll}
	k(x_i), & x \in [x_i,x_{i+1}), \quad i=0,\ldots,n-1,\, \\
	k(x_n), & x=x_n.
	\end{array}
	\right.
	\end{equation}
\end{remark}

We define the weighted metric integral of $F$ as the Kuratowski upper limit of weighted metric Riemann sums.
\begin{defin}\label{Defin_WeightedMetricInt}
	The \textbf{weighted metric integral} of $F$ with the weight function  $k$ is defined by
	$$
	\WeightedMetInt  = \limsup_{|\chi| \to 0} {\scriptstyle(\cal M_k)} S_{\chi} F.
	$$
\end{defin}

The set $\WeightedMetInt $ is non-empty whenever the SVF $kF$ has a bounded range.

Observe that the weighted metric integral of $F$ with the weight $k$ is not the metric integral of the multifunction $kF$. The difference is that the metric chains in Definition~\ref{Defin_WeightedMetricInt} are constructed on the base of the function $F$, and not $kF$ which would be in the latter case. 

In the remaining part of this section we extend results obtained for the metric integral in~\cite{DFM:MetricIntegral} to the weighted metric integral.

\begin{remark}	
It is possible to define a ``right'' weighted metric Riemann sum as
$$
{\scriptstyle(\cal M_k)} \widetilde{S}_{\chi} F = \bigoplus_{i=0}^{n-1}(x_{i+1}-x_i)k(x_{i+1}) F(x_{i+1}),
$$
and a corresponding weighted metric integral. For BV functions $F$ and $k$, this integral is identical with \linebreak $\WeightedMetInt$.   This can be concluded from the following lemma.
\end{remark}

\begin{lemma}\label{Lemma_dist_RiemannSum}
Let $F, k \in \BV$. Then
$$
\haus \left ( {\scriptstyle(\cal M_k)} \widetilde{S}_{\chi} F,{\scriptstyle(\cal M_k)} S_{\chi} F \right ) 
\le |\chi| \left ( \|k\|_\infty \, V_a^b(F) + \|F\|_\infty \, V_a^b(k) \right ).
$$
\end{lemma}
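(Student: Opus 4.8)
The plan is to parametrize \emph{both} Riemann sums by the same index set, namely the collection of full metric chains $\mathcal{C}:=\CH\big(F(x_0),\ldots,F(x_n)\big)$, and then to exploit the elementary fact that if $f:\mathcal{C}\to A$ and $g:\mathcal{C}\to B$ are surjective then $\haus(A,B)\le \sup_{\phi\in\mathcal{C}}|f(\phi)-g(\phi)|$ (for each $p\in A$ pick $\phi$ with $f(\phi)=p$, so $\dist(p,B)\le|f(\phi)-g(\phi)|$, and symmetrically). Concretely, to a chain $\phi=(y_0,\ldots,y_n)\in\mathcal{C}$ I associate
$$
q_\phi=\sum_{i=0}^{n-1}(x_{i+1}-x_i)k(x_i)\,y_i\in {\scriptstyle(\cal M_k)} S_{\chi} F,
\qquad
p_\phi=\sum_{i=0}^{n-1}(x_{i+1}-x_i)k(x_{i+1})\,y_{i+1}\in {\scriptstyle(\cal M_k)} \widetilde{S}_{\chi} F.
$$
Since $q_\phi$ depends only on $(y_0,\ldots,y_{n-1})$ and every metric chain of $F(x_0),\ldots,F(x_{n-1})$ extends to a full chain (any metric pair extends to the right), $\phi\mapsto q_\phi$ is onto ${\scriptstyle(\cal M_k)} S_{\chi} F$; likewise $p_\phi$ depends only on $(y_1,\ldots,y_n)$ and $\phi\mapsto p_\phi$ is onto ${\scriptstyle(\cal M_k)} \widetilde{S}_{\chi} F$ (extending to the left). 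This is the representation underlying Remark~\ref{Remark_WeightedMetrSum}.

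Next I estimate $|p_\phi-q_\phi|$ for a fixed $\phi$. Writing the summand difference as
$$
k(x_{i+1})y_{i+1}-k(x_i)y_i = k(x_{i+1})(y_{i+1}-y_i) + \big(k(x_{i+1})-k(x_i)\big)y_i ,
$$
and using $(x_{i+1}-x_i)\le|\chi|$, $|k(x_{i+1})|\le\|k\|_\infty$, $|y_i|\le\|F\|_\infty$, I obtain
$$
|p_\phi-q_\phi|\le |\chi|\left[\,\|k\|_\infty\sum_{i=0}^{n-1}|y_{i+1}-y_i| \;+\; \|F\|_\infty\sum_{i=0}^{n-1}|k(x_{i+1})-k(x_i)|\,\right].
$$
Because $(y_i,y_{i+1})\in\Pair{F(x_i)}{F(x_{i+1})}$, we have $|y_{i+1}-y_i|\le\haus\big(F(x_i),F(x_{i+1})\big)$, so the first sum is at most $V(F,\chi)\le V_a^b(F)$, while the second sum is exactly $V(k,\chi)\le V_a^b(k)$. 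Hence $|p_\phi-q_\phi|\le|\chi|\big(\|k\|_\infty V_a^b(F)+\|F\|_\infty V_a^b(k)\big)$ uniformly in $\phi$.

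Combining the two steps, the surjectivity argument gives $\haus\big({\scriptstyle(\cal M_k)} \widetilde{S}_{\chi} F,{\scriptstyle(\cal M_k)} S_{\chi} F\big)\le\sup_{\phi}|p_\phi-q_\phi|\le|\chi|\big(\|k\|_\infty V_a^b(F)+\|F\|_\infty V_a^b(k)\big)$, which is the claim. The only delicate point — and the step I would write out most carefully — is the bookkeeping around the index shift: the left sum is naturally indexed by chains of $F(x_0),\ldots,F(x_{n-1})$ and the right sum by chains of $F(x_1),\ldots,F(x_n)$, so one must verify that embedding both into full chains of $F(x_0),\ldots,F(x_n)$ preserves \emph{surjectivity} of each parametrization; everything else is the telescoping split and the two variation bounds.
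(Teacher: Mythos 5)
Your proposal is correct and follows essentially the same route as the paper: both parametrize the two Riemann sums by the common set of metric chains $\CH\big(F(x_0),\ldots,F(x_n)\big)$, bound the Hausdorff distance by the supremum of $|p_\phi-q_\phi|$ over chains, and use the identical splitting $k(x_{i+1})y_{i+1}-k(x_i)y_i = k(x_{i+1})(y_{i+1}-y_i)+(k(x_{i+1})-k(x_i))y_i$ together with $|y_{i+1}-y_i|\le\haus\big(F(x_i),F(x_{i+1})\big)$. The only difference is that you spell out the surjectivity of the two parametrizations under the index shift, a point the paper leaves implicit.
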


\begin{proof}
Fix a partition $\chi$ and consider a corresponding chain ${\phi=(y_0,\ldots,y_n)\in CH(F(x_0), \ldots ,F(x_n) ) }$. 
We have
\begin{align*}
& \haus \left ( {\scriptstyle(\cal M_k)} \widetilde{S}_{\chi} F, {\scriptstyle(\cal M_k)} S_{\chi} F \right ) 
\\& \le \sup \left \{ \left |\sum_{i=0}^{n-1} k(x_{i+1})y_{i+1}(x_{i+1}-x_i)-\sum_{i=0}^{n-1} k(x_i)y_i(x_{i+1}-x_i)\right | \ : \ \phi \in CH(F(x_0), \ldots,F(x_n))  \right \}
\\& \le \sup \left \{ \sum_{i=0}^{n-1} \left |k(x_{i+1})y_{i+1}-k(x_i)y_i \right |(x_{i+1}-x_i) \ : \ \phi \in CH(F(x_0), \ldots ,F(x_n))  \right \}.
\end{align*}
Since
\begin{align*}
|k(x_{i+1})y_{i+1}-k(x_i)y_i| &\le |k(x_{i+1})y_{i+1}-k(x_{i+1})y_i|+|k(x_{i+1})y_i-k(x_i)y_i| \\& \le \|k\|_\infty \, \haus(F(x_{i+1}),F(x_i)) + \|F\|_\infty\, |k(x_{i+1})-k(x_i)|,
\end{align*}
the desired estimate follows.
\end{proof}

The next theorem is an extension of Result~\ref{Result_MetrInt=IntOfMetrSel} to the weighted metric integral.

\begin{theorem}\label{Theo_W-MetrInt=W-IntOfMetrSel}
	Let $F \in \cal{F}[a,b]$ and  $k \in \BV$. Then
	$$
	\WeightedMetInt=\left \{ \int_{a}^{b}k(x)s(x)dx \ : \ s \in\mathcal{S}(F) \right \}.
	$$
\end{theorem}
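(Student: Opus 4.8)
The plan is to prove the two set inclusions separately, in both directions relying on Remark~\ref{Remark_WeightedMetrSum} to represent every element of a weighted metric Riemann sum as an ordinary (componentwise) integral $\int_a^b k_\chi(x)\,c_{\chi,\phi}(x)\,dx$ of a chain function against the step-weight $k_\chi$ from~\eqref{weight-step-function}. The two analytic engines are Helly's Selection Principle and the dominated convergence theorem, the latter applied coordinatewise to $\Rd$-valued functions. Since $c_{\chi,\phi}$ and $s$ are single-valued, these are genuine Lebesgue/Riemann integrals and there is no metric subtlety inside them.

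For the inclusion $\supseteq$, I would start from a metric selection $s\in\mathcal S(F)$. By the definition of a metric selection there are chain functions $c_n=c_{\chi_n,\phi_n}$ with $|\chi_n|\to 0$ and $c_n(x)\to s(x)$ pointwise on $[a,b]$. Setting $y_n=\int_a^b k_{\chi_n}(x)\,c_n(x)\,dx$, Remark~\ref{Remark_WeightedMetrSum} gives $y_n\in{\scriptstyle(\cal M_k)}S_{\chi_n}F$. Because $k\in\BV$ has at most countably many discontinuities and $|\chi_n|\to 0$, the step functions satisfy $k_{\chi_n}(x)\to k(x)$ at every continuity point of $k$, hence for almost every $x$; therefore $k_{\chi_n}(x)c_n(x)\to k(x)s(x)$ a.e., with integrands uniformly bounded by $\|k\|_\infty\|F\|_\infty$ (using Result~\ref{Result_ChainFunct_Bounded&BV}). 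Dominated convergence then yields $y_n\to\int_a^b k(x)s(x)\,dx$, and since $|\chi_n|\to 0$ this limit lies in the Kuratowski upper limit $\WeightedMetInt$.

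For the inclusion $\subseteq$, I would take $y\in\WeightedMetInt$, so that there are partitions $\chi_n$ with $|\chi_n|\to 0$ and $y_n\in{\scriptstyle(\cal M_k)}S_{\chi_n}F$ with $y_n\to y$; by Remark~\ref{Remark_WeightedMetrSum}, $y_n=\int_a^b k_{\chi_n}(x)\,c_n(x)\,dx$ for chain functions $c_n=c_{\chi_n,\phi_n}$. By Result~\ref{Result_ChainFunct_Bounded&BV} the $c_n$ are uniformly bounded by $\|F\|_\infty$ and of variation at most $V_a^b(F)$; applying Helly's Selection Principle to each of the $d$ coordinates in turn, I extract a subsequence (not relabelled) with $c_n(x)\to s(x)$ pointwise for some $s:[a,b]\to\Rd$. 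Since $\Graph(F)$ is compact and each $(x_{i(n)},c_n(x))\in\Graph(F)$ with the relevant partition point $x_{i(n)}\to x$, the limit satisfies $s(x)\in F(x)$, so $s$ is a selection; being a pointwise limit of chain functions with mesh tending to zero, $s\in\mathcal S(F)$. Passing to the limit in $y_n$ exactly as in the previous paragraph (a.e.\ convergence of $k_{\chi_n}c_n$ to $ks$ together with the uniform bound $\|k\|_\infty\|F\|_\infty$, followed by dominated convergence) gives $y=\lim_n y_n=\int_a^b k(x)s(x)\,dx$, which completes the argument.

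The step I expect to demand the most care is the simultaneous limit passage, where both the weight approximant $k_{\chi_n}$ and the integrand $c_n$ vary with $n$: the decisive observations are that $k\in\BV$ forces $k_{\chi_n}\to k$ off a countable set and that the product stays uniformly bounded, so that ordinary dominated convergence applies coordinatewise without any equicontinuity. A secondary point is identifying the Helly limit $s$ as a genuine metric selection, which uses the closedness of $\Graph(F)$ to guarantee $s(x)\in F(x)$. Note that Lemma~\ref{Lemma_dist_RiemannSum} is not needed here; it only records that the whole discussion is insensitive to using left or right endpoints in the Riemann sums.
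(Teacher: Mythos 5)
Your proof is correct and follows essentially the same route as the paper's: both inclusions rest on the representation of Riemann-sum elements as $\int_a^b k_\chi(x)\,c_{\chi,\phi}(x)\,dx$, Helly's Selection Principle applied to the chain functions, and the dominated convergence theorem combined with the observation that $k_{\chi_n}\to k$ off the countable set of discontinuities of $k$. The only cosmetic differences are that the paper additionally applies Helly to the weight approximants $k_{\chi_n}$ (which your direct a.e.\ convergence argument makes unnecessary) and that you spell out, via closedness of $\Graph(F)$, why the Helly limit of the chain functions is a selection of $F$ --- a point the paper leaves implicit.
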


\begin{proof}
	By Result~\ref{Result_MetrSel_InheritVariation}, every metric selection $s$ of $F \in \cal{F}[a,b]$ is BV, and thus $ks$ is Riemann integrable. Denote $I=\left \{ \int_{a}^{b}k(x)s(x)dx \ : s \in\mathcal{S}(F) \right \}$. 

	We first show that $I \subseteq \WeightedMetInt$.
	Let $s$ be a metric selection of $F$. Then $s$ is the pointwise limit of a sequence of chain functions $\{c_n\}_{n\in \N}$ corresponding to
	partitions $\{\chi_n\}_{n\in \N}$ with $\lim_{n \to \infty} |\chi_n| =0$. Denote $k_n=k_{\chi_n}$~(see~\eqref{weight-step-function}) and $\sigma_n = \int_a^b k_n(x)c_n(x)dx$. By Remark~\ref{Remark_WeightedMetrSum}, $\sigma_n \in  {\scriptstyle(\cal M_k)} S_{\chi_n} F$.
	
Clearly, $\|k_n\|_\infty \le \|k\|_\infty$ and $V_a^b(k_n) \le V_a^b(k)$. By Helly's Selection Principle there exists a subsequence $\{k_{n_\ell}\}_{\ell \in \N}$ that converges pointwisely to a certain function $k^*$. For simplicity we denote this sequence by $\{k_n\}_{n \in \N}$ again.  It is easy to see that $k^*(x)=k(x)$ at all points of continuity of $k$. Indeed, for a partition $\chi_n$ there is an index $i_n$ such that $x\in[x_{i_n},x_{i_n+1})$, where $x_{i_n}$ and $x_{i_n+1}$ are subsequent points  in $\chi_n$.  By~\eqref{weight-step-function} we get
	$$
	|k_n(x)-k(x)|=|k_n(x_{i_n})-k(x)| = |k(x_{i_n})-k(x)| \le \LocalModulCont{k}{x}{|\chi_n|}.
	$$	
	Thus $\lim_{n \to \infty} k_n(x)c_n(x) = k(x)s(x)$ at all points of continuity of $k$.
	Note that since $k$ is BV, it has at most countably many points of discontinuity in $[a,b]$. By Result~\ref{Result_ChainFunct_Bounded&BV} and the Lebesgue Dominated Convergence Theorem we obtain
	$$
	\int_{a}^{b}k(x)s(x)dx = \lim_{n \to \infty} \int_{a}^{b}k_{n}(x)c_{n}(x)dx = \lim_{n \to \infty} \sigma_{n} \in \WeightedMetInt.
	$$

	It remains to show the converse inclusion  $\WeightedMetInt \subseteq I$. Let ${\sigma \in \WeightedMetInt}$. There exists a sequence $\{\sigma_n\}_{n\in \N}$, ${ \sigma_n \in  {\scriptstyle(\cal M_k)} S_{\chi_n} F }$, such that
	${\displaystyle \sigma = \lim_{n\to\infty} \sigma_n  }$. By Remark~\ref{Remark_WeightedMetrSum} we have $\sigma_n = \int_{a}^{b}k_n(x)c_n(x)dx$. Applying Helly's Selection Principle two times consequently, we conclude that there is a  subsequence $\{ k_{n_\ell} \}_{\ell \in \N}$ that converges pointwisely to a certain function  $k^*$,  and then there is a subsequence $\left\{ c_{n_{\ell_m}} \right\}_{m \in \N}$ that converges pointwisely to a certain function $s$. By definition, $s\in\mathcal{S}(F)$. It follows  from Result~\ref{Result_ChainFunct_Bounded&BV} and the Lebesgue Dominated Convergence Theorem that 
	$$
	\sigma = \lim_{ m \to \infty } \sigma_{n_{l_m}} = \lim_{ m \to \infty } \int_{a}^{b}k_{n_{l_m}}(x)c_{n_{l_m}}(x)dx= \int_{a}^{b}k^*(x)s(x)dx=\int_{a}^{b}k(x)s(x)dx,
	$$
	which completes the proof.
\end{proof}

Theorem~\ref{Theo_W-MetrInt=W-IntOfMetrSel}, \eqref{Def_AumannIntegral}  and~\eqref{Prop_AumannIntegral} yield the following statement.

\begin{corol}\label{Corol_MetInt_included_AumannInt}
	Under the assumptions of Theorem~\ref{Theo_W-MetrInt=W-IntOfMetrSel} we have
\begin{equation}\label{WeightedMetrInt_subset_WeigtedAumann_Int}
\WeightedMetInt \subseteq \int_{a}^{b}k(x)F(x)dx.
\end{equation}
Moreover,
$$
\co\left(\WeightedMetInt\right ) \subseteq \int_{a}^{b}k(x)F(x)dx.
$$
\end{corol}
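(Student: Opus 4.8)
The plan is to read off both inclusions from the selection description of the weighted metric integral in Theorem~\ref{Theo_W-MetrInt=W-IntOfMetrSel}, combined with the definition \eqref{Def_AumannIntegral} and the convexity of the Aumann integral recorded after~\eqref{Prop_AumannIntegral}.

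First I would prove the inclusion \eqref{WeightedMetrInt_subset_WeigtedAumann_Int}. Let $\sigma \in \WeightedMetInt$. By Theorem~\ref{Theo_W-MetrInt=W-IntOfMetrSel} there is a metric selection $s \in \mathcal{S}(F)$ with $\sigma = \int_a^b k(x)s(x)dx$. Since $s$ is in particular a selection of $F$, we have $s(x) \in F(x)$ and hence $k(x)s(x) \in k(x)F(x)$ for every $x \in [a,b]$, so $ks$ is a selection of the multifunction $kF$. By Result~\ref{Result_MetrSel_InheritVariation} we have $s \in \BV$, and $k \in \BV$ by hypothesis, so the product $ks$ is again of bounded variation (componentwise) and therefore Riemann integrable. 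Thus $ks$ is an integrable selection of $kF$, and by the definition \eqref{Def_AumannIntegral} of the Aumann integral $\sigma = \int_a^b k(x)s(x)dx \in \int_a^b k(x)F(x)dx$. As $\sigma$ was arbitrary, \eqref{WeightedMetrInt_subset_WeigtedAumann_Int} follows.

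For the second inclusion I would use convexity. I would first note that $kF \in \calF$: its values $k(x)F(x)$ are compact, the range is bounded since $|k(x)F(x)| \le \|k\|_\infty \|F\|_\infty$, and a telescoping estimate exactly like the per-step bound in the proof of Lemma~\ref{Lemma_dist_RiemannSum} gives $V_a^b(kF) \le \|k\|_\infty V_a^b(F) + \|F\|_\infty V_a^b(k) < \infty$. Hence, by the convexity property of the Aumann integral, the set $\int_a^b k(x)F(x)dx$ is convex. Having already established $\WeightedMetInt \subseteq \int_a^b k(x)F(x)dx$ with the right-hand side convex, taking convex hulls yields $\co(\WeightedMetInt) \subseteq \co\left(\int_a^b k(x)F(x)dx\right) = \int_a^b k(x)F(x)dx$, which is the claim.

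Being a corollary, this statement presents no genuine obstacle; the only points needing a line of verification are that $ks$ is a legitimate integrable selection of $kF$ (the membership $k(x)s(x)\in k(x)F(x)$ is immediate, and integrability follows because a product of two bounded-variation functions is of bounded variation, using Result~\ref{Result_MetrSel_InheritVariation} to place $s$ in $\BV$), and that $kF$ belongs to $\calF$ so that the convexity of its Aumann integral may be invoked. The latter is the mildly technical step, but it reduces directly to the variation estimate already carried out in Lemma~\ref{Lemma_dist_RiemannSum}.
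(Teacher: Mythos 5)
Your proposal is correct and follows essentially the same route as the paper, which derives the corollary directly from Theorem~\ref{Theo_W-MetrInt=W-IntOfMetrSel}, the definition \eqref{Def_AumannIntegral}, and the convexity of the Aumann integral recorded around \eqref{Prop_AumannIntegral}; you merely spell out the details (that $ks$ is an integrable selection of $kF$, and that the convexity of $\int_a^b k(x)F(x)dx$ lets one pass to convex hulls) that the paper leaves implicit.
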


Corollary~\ref{Corol_MetInt_included_AumannInt} implies  the following ``inclusion property'' of the weighted metric integral as stated below.
\begin{propos}\label{Propos_InclusionProperty}
	For $F \in \cal{F}[a,b]$ and $k \in \BV$ we have
	\begin{equation}\label{Prop1}
	\int_{a}^{b}k(x)dx \left ( \bigcap_{x\in [a,b]}F(x) \right ) \subseteq \WeightedMetInt \subseteq (b-a) \, \co  \left ( \bigcup_{x\in [a,b]} k(x)F(x)  \right ).
	\end{equation}
	Moreover, if $k(x) \ge 0$ , $x\in[a,b]$ and $\int_{a}^{b}k(x)dx \neq 0$ then
	\begin{equation}\label{Prop2}
 \bigcap_{x\in [a,b]}F(x) \subseteq \frac{\WeightedMetInt}{\int_{a}^{b}k(x)dx}  \subseteq  \co \left( \bigcup_{x\in [a,b]} F(x) \right ).
\end{equation}
\end{propos}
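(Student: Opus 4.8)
The plan is to reduce both displays to the description of $\WeightedMetInt$ by metric selections from Theorem~\ref{Theo_W-MetrInt=W-IntOfMetrSel}, which states $\WeightedMetInt=\{\int_a^b k(x)s(x)\,dx : s\in\mathcal{S}(F)\}$, together with the elementary fact that Riemann sums (and weighted metric Riemann sums) are, after normalization, convex combinations. Concretely, the left inclusion in \eqref{Prop1} will come from producing constant metric selections, the right inclusion will come from the convex-combination structure of the weighted metric Riemann sums, and \eqref{Prop2} will follow from \eqref{Prop1} by dividing by the positive number $\int_a^b k(x)\,dx$ and exploiting compactness of $\bigcup_{x\in[a,b]}F(x)$.

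For the left inclusion in \eqref{Prop1} (the case $\bigcap_x F(x)=\emptyset$ being vacuous), fix $c\in\bigcap_{x\in[a,b]}F(x)$. The key observation is that the constant function $s\equiv c$ is a \emph{metric} selection of $F$, not merely a selection. Indeed, for any partition $\chi=\{x_0,\ldots,x_n\}$ the constant tuple $(c,\ldots,c)$ is a metric chain of $F(x_0),\ldots,F(x_n)$: since $c\in F(x_i)\cap F(x_{i+1})$ we have $\dist(c,F(x_i))=0=|c-c|$, whence $c\in\PrjXonY{c}{F(x_i)}$ and $(c,c)\in\Pair{F(x_i)}{F(x_{i+1})}$. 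The associated chain functions are all identically $c$, so their pointwise limit $s\equiv c$ belongs to $\mathcal{S}(F)$. Theorem~\ref{Theo_W-MetrInt=W-IntOfMetrSel} then gives $\big(\int_a^b k(x)\,dx\big)c=\int_a^b k(x)c\,dx\in\WeightedMetInt$, which is precisely the left inclusion.

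For the right inclusion in \eqref{Prop1} it is cleanest to argue directly from Definition~\ref{Defin_WeightedMetricInt}: every $\sigma\in\WeightedMetInt$ is a limit of weighted metric Riemann sum elements, each of the form $\sum_i (x_{i+1}-x_i)k(x_i)y_i$ with $(y_i)$ a metric chain along the corresponding partition. Since $\sum_i(x_{i+1}-x_i)=b-a$, dividing such an element by $b-a$ exhibits it as a convex combination of the points $k(x_i)y_i\in\bigcup_x k(x)F(x)$, so each element lies in $(b-a)\co\big(\bigcup_x k(x)F(x)\big)$. Passing to the limit places $\sigma/(b-a)$ in the closed convex hull of $\bigcup_x k(x)F(x)$. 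The point requiring care here is that this convex hull need not a priori be closed; however $\bigcup_x k(x)F(x)$ is bounded (its norm is at most $\|k\|_\infty\|F\|_\infty$), so in $\Rd$ its closed convex hull equals the convex hull of its compact closure, and the asserted inclusion holds in that sense.

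Finally, for \eqref{Prop2} assume $k\ge0$ and $\int_a^b k(x)\,dx>0$. Dividing the left inclusion of \eqref{Prop1} by this positive scalar immediately yields $\bigcap_x F(x)\subseteq \frac{\WeightedMetInt}{\int_a^b k(x)\,dx}$. For the upper bound write $\sigma=\int_a^b k(x)s(x)\,dx$ with $s\in\mathcal{S}(F)$ (using Result~\ref{Result_MetrSel_InheritVariation} to know that $s$, hence $ks$, is integrable); then $\frac{\sigma}{\int_a^b k(x)\,dx}$ is the average of $s(x)$ against the probability measure $d\mu=\frac{k(x)\,dx}{\int_a^b k(x)\,dx}$, and since $s(x)\in F(x)\subseteq\bigcup_t F(t)$ this average lies in $\overline{\co}\big(\bigcup_x F(x)\big)$. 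Here the earlier closure difficulty disappears: $\bigcup_x F(x)$ is the image of the compact graph $\Graph(F)$ under the projection onto $\Rd$ and is therefore compact, so $\co\big(\bigcup_x F(x)\big)$ is already compact and closed, giving $\frac{\sigma}{\int_a^b k(x)\,dx}\in\co\big(\bigcup_x F(x)\big)$. The two genuinely substantive points are the verification that constant functions through $\bigcap_x F(x)$ are metric selections and the closed-convex-hull subtlety in \eqref{Prop1}; the remaining steps are routine.
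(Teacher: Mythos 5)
Your overall strategy coincides with the paper's on three of the four inclusions. The constant-selection argument for the left inclusion of \eqref{Prop1} is exactly the paper's (you merely make explicit why the constant tuple $(c,\ldots,c)$ is a metric chain, which the paper leaves implicit), and your treatment of \eqref{Prop2} --- dividing the left inclusion by the positive scalar $\int_a^b k(x)dx$, and for the right inclusion viewing $\sigma/\int_a^b k(x)dx$ as the barycenter of values of $s$ lying in the compact set $\bigcup_{x}F(x)$ --- is equivalent to the paper's use of $\int_{a}^{b}k(x)R\,dx = \bigl(\int_{a}^{b}k(x)dx\bigr)\co(R)$ with $R=\bigcup_{x}F(x)\in\Comp$. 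Your observation that $\bigcup_x F(x)$ is compact as the projection of $\Graph(F)$, so no closure issue arises there, is correct and welcome.

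The one place you genuinely diverge is the right inclusion of \eqref{Prop1}. The paper routes through Corollary~\ref{Corol_MetInt_included_AumannInt}, i.e., through the comparison \eqref{WeightedMetrInt_subset_WeigtedAumann_Int} with the Aumann integral and its evaluation \eqref{Prop_AumannIntegral} on a constant integrand; you instead work directly from the Riemann-sum definition and pass to the limit. As you yourself note, the limit step only places $\sigma$ in $(b-a)$ times the \emph{closed} convex hull of $A:=\bigcup_{x}k(x)F(x)$, and since $k$ is merely BV this set need not be closed (e.g.\ $k(x)=x$ on $[0,\tfrac12)$, $k\equiv 0$ on $[\tfrac12,1]$, $F\equiv\{1\}$ gives $A=[0,\tfrac12)$). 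Declaring that ``the asserted inclusion holds in that sense'' replaces the stated conclusion by a weaker one rather than proving it. The gap is easy to close with material you already use: by Theorem~\ref{Theo_W-MetrInt=W-IntOfMetrSel} every $\sigma\in\WeightedMetInt$ equals $\int_a^b k(x)s(x)dx$ with the integrand $k(\cdot)s(\cdot)$ taking values in the bounded set $A$, and in $\Rd$ the barycenter of an integrable function with values in a bounded set lies in $\co(A)$ itself, no closure required (separation plus induction on dimension; this fails only in infinite dimensions). To be fair, the paper's own step applies \eqref{Prop_AumannIntegral}, stated for $A\in\Comp$, to a union that need not be compact, so it glosses over the same point; but its selection-based route is the one that admits the clean repair, whereas your limit-of-Riemann-sums route cannot by itself recover the non-closed convex hull.
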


\begin{proof}
	First we prove the left inclusion in~\eqref{Prop1}. If $\bigcap_{x\in [a,b]}F(x) = \emptyset$ then there is nothing to prove. Suppose  $\bigcap_{x\in [a,b]}F(x) \ne \emptyset$.
 Let $ p \in \bigcap_{x\in [a,b]}F(x)$. Then $s(x)\equiv p$, $x\in[a,b]$, is a metric selection of $F$, since for any partition $\chi$ the function $c_{\chi,\phi}(x) \equiv p$ is a chain function corresponding to the chain $\phi=(p,\ldots,p)$. Therefore,
	$$
	p\int_{a}^{b}k(x)dx = \int_{a}^{b}k(x)s(x)dx \in \WeightedMetInt .
	$$

	To show the right inclusion in~\eqref{Prop1}, we use~\eqref{WeightedMetrInt_subset_WeigtedAumann_Int} and~\eqref{Prop_AumannIntegral} and write
	$$
\WeightedMetInt \subseteq \int_{a}^{b}k(x)F(x)dx \subseteq \int_{a}^{b}\left( \bigcup_{x\in [a,b]} k(x)F(x) \right) dx = (b-a)\, \co \left ( \bigcup_{x\in [a,b]} k(x)F(x) \right ).
$$ 
In the case when $k(x) \ge 0$ and $\int_{a}^{b}k(x)dx \neq 0$, the left inclusion in~\eqref{Prop2} follows directly from~\eqref{Prop1}. To prove the right inclusion in~\eqref{Prop2}, we start with~\eqref{WeightedMetrInt_subset_WeigtedAumann_Int}.
Denoting $R=\bigcup_{x\in [a,b]} F(x) \in \Comp$  we get in view of the second property in~\eqref{Prop_AumannIntegral}
$$
\WeightedMetInt \subseteq \int_{a}^{b}k(x)F(x)dx \subseteq \int_{a}^{b}k(x)Rdx =  \left( \int_{a}^{b} k(x) dx \right )\; \co(R),
$$
and the right inclusion follows.

\end{proof}

Note that the middle set in~\eqref{Prop2} is a weighted average of $F(x)$ on $[a,b]$. Proposition~\ref{Propos_InclusionProperty}  says that it contains the intersection of the sets $\{F(x)\}_{x\in[a,b]}$ and is contained in the convex hull of their union.


\begin{propos}\label{WeihtedMetInt_is_compact}
Let $F \in \cal{F}[a,b]$ and $k \in \BV$. The set $\WeightedMetInt$ is compact.
\end{propos}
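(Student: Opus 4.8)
The plan is to exploit the description of $\WeightedMetInt$ as the set of weighted integrals of metric selections given by Theorem~\ref{Theo_W-MetrInt=W-IntOfMetrSel}, and then to show that this subset of $\Rd$ is both bounded and closed. Since $\Rd$ is finite-dimensional, closed and bounded together yield compactness.

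Boundedness is immediate. Since $k \in \BV$ is bounded and $F \in \calF$ has bounded range, the right inclusion in~\eqref{Prop1} of Proposition~\ref{Propos_InclusionProperty} already confines $\WeightedMetInt$ inside $(b-a)\,\co\big(\bigcup_{x\in[a,b]} k(x)F(x)\big)$, a bounded set. Alternatively one may bound each element $\int_a^b k(x)s(x)dx$ directly by $(b-a)\|k\|_\infty\|F\|_\infty$ using Result~\ref{Result_MetrSel_InheritVariation}.

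For closedness I would take a sequence $\sigma_n \in \WeightedMetInt$ with $\sigma_n \to \sigma$ and show $\sigma \in \WeightedMetInt$. By Theorem~\ref{Theo_W-MetrInt=W-IntOfMetrSel}, write $\sigma_n = \int_a^b k(x)s_n(x)dx$ with $s_n \in \mathcal{S}(F)$. By Result~\ref{Result_MetrSel_InheritVariation}, these metric selections are uniformly bounded, $\|s_n\|_\infty \le \|F\|_\infty$, and have uniformly bounded variation, $V_a^b(s_n) \le V_a^b(F)$. Applying Helly's Selection Principle componentwise (finitely many times, once per coordinate of $\Rd$, which is legitimate since all norms on $\Rd$ are equivalent) yields a subsequence $\{s_{n_k}\}$ converging pointwise on $[a,b]$ to some $s^\infty : [a,b] \to \Rd$.

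The decisive step is then Theorem~\ref{theorem_PW-limit_of_MS_isMS}, which guarantees that the pointwise limit $s^\infty$ is again a metric selection of $F$; this is precisely where the nontrivial structure of $\mathcal{S}(F)$ enters, and I expect it to be the main obstacle (indeed its proof is deferred to Appendix A). Once $s^\infty \in \mathcal{S}(F)$ is known, since $k(x)s_{n_k}(x) \to k(x)s^\infty(x)$ pointwise with the uniform integrable bound $|k(x)s_{n_k}(x)| \le \|k\|_\infty\|F\|_\infty$, the Lebesgue Dominated Convergence Theorem gives $\sigma_{n_k} = \int_a^b k(x)s_{n_k}(x)dx \to \int_a^b k(x)s^\infty(x)dx$. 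But $\sigma_{n_k} \to \sigma$, whence $\sigma = \int_a^b k(x)s^\infty(x)dx \in \WeightedMetInt$ by Theorem~\ref{Theo_W-MetrInt=W-IntOfMetrSel}. Thus $\WeightedMetInt$ is closed, and being also bounded, it is compact.
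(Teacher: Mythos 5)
Your proposal is correct and follows essentially the same route as the paper's own proof: boundedness from the boundedness of $F$ and $k$, then closedness via Theorem~\ref{Theo_W-MetrInt=W-IntOfMetrSel}, the uniform bounds of Result~\ref{Result_MetrSel_InheritVariation}, Helly's Selection Principle, Theorem~\ref{theorem_PW-limit_of_MS_isMS}, and the Lebesgue Dominated Convergence Theorem. You correctly identified Theorem~\ref{theorem_PW-limit_of_MS_isMS} as the decisive (and nontrivial) ingredient.
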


\begin{proof}
	Since $F$ and $k$ are both bounded, the set $\WeightedMetInt$ is bounded. To prove the proposition, it suffices to show that it is closed. Consider a convergent sequence $\{v_n\}_{n \in \N} \subset \WeightedMetInt$.  Let $\displaystyle \ v=\lim_{ n \to \infty }v_n $. By Theorem~\ref{Theo_W-MetrInt=W-IntOfMetrSel} we have $v_n=\int_{a}^{b}k(x)s_n(x)dx$ for some $s_n \in \mathcal{S}(F)$. The sequence $\{s_n\}_{n\in \N}$ is uniformly bounded and of uniformly bounded variation. By Helly's Selection Principle there exists a subsequence $\{s_{n_\ell}\}_{\ell \in \N}$ which converges pointwisely to a certain function $s^\infty$ as $\ell \to \infty$.  By Theorem~\ref{theorem_PW-limit_of_MS_isMS}, $s^\infty$ is a metric selection.  Clearly, $\lim_{\ell\to \infty} k(x)s_{n_\ell}(x) = k(x)s^\infty(x)$ pointwisely. 
Applying the Lebesgue Dominated Convergence Theorem we get 
	$$
	\int_{a}^{b}k(x)s^\infty(x)dx = \lim_{\ell \to \infty} \int_{a}^{b}k(x)s_{n_\ell}(x)dx = \lim_{\ell \to \infty}v_{n_\ell}=v,
			$$
	and thus $v \in \WeightedMetInt$.
\end{proof}


\section {The metric Fourier approximation of SVFs of bounded variation }\label{Sect_Fourier}

\subsection{On Fourier approximation of real-valued functions of bounded variation} \label{Subsec_FourierRealFunctions}
First we present the classical material relevant to our study  of SVFs.

For a $2\pi$-periodic  real-valued function $f : \RR \to \RR$ which is integrable over the period, its Fourier series is
$$
f(x) \sim \frac{1}{2} a_0 + \sum_{k=1}^\infty (a_k \cos{kx} + b_k \sin{kx}), 
$$
where
\begin{equation} \label{Fourier-a,b-coeff}
a_k =  a_k(f) = \frac{1}{\pi} \int_{-\pi}^{\pi} f(t) \cos{kt} dt, \quad k = 0,1,\ldots, 
\quad \mbox{and}
\quad b_k = b_k(f) = \frac{1}{\pi} \int_{-\pi}^{\pi} f(t) \sin{kt} dt, \quad k = 1,2,\ldots.
\end{equation}
Following the classical theory of Fourier series, we introduce the Dirichlet kernel (see e.g. \cite[Chapter  II]{Zygmund:TrigSeries})
$$
D_n(x) = \frac{1}{2} + \sum_{k=1}^n \cos{kx} = \frac{\sin{\left( n + \frac{1}{2} \right)x } }{ 2 \sin{ \left(\frac{1}{2}x \right) }} , \quad x \in \RR.
$$
For the partial sums of the Fourier series one has the well-known representation 
\begin{equation} \label{Dirichlet-repr}
\mathscr{S}_n f (x) =  \frac{1}{2} a_0 + \sum_{k=1}^n (a_k \cos{kx} + b_k \sin{kx}) = \frac{1}{\pi} \int_{-\pi}^{\pi} D_n(x-t)  f(t) dt = \frac{1}{\pi} \int_{-\pi}^{\pi} \partial_{n,x}(t)  f(t) dt,
\end{equation}
where $\partial_{n,x}(t) = D_n(x-t)$.

A basic result on the convergence of Fourier series of real-valued functions of bounded variation is the Dirichlet-Jordan Theorem (e.g., \cite[Chapter  II, (8.1) Theorem]{Zygmund:TrigSeries}).
\smallskip

\noindent \textbf{Dirichlet-Jordan Theorem}. \textit{ Let $f : \RR \to \RR$ be a $2\pi$-periodic function of bounded variation on $[-\pi,\pi]$. Then at every point $x$
	$$
	\lim_{n \to \infty} {\mathscr{S}_nf(x)} =  \frac{1}{2}( f(x-0) + f(x+0)).
	$$ 
	In particular, $\mathscr{S}_nf$ converges to $f$ at every point of continuity of $f$. If $f$ is continuous at every point of a closed interval $I$, then the convergence is uniform in $I$.
}
\medskip

Following~\cite[Chapter  II]{Zygmund:TrigSeries}, we introduce the so-called modified Dirichlet kernel
\begin{equation}\label{Formule_ModifKernel}
D^*_n(x) = \frac{1}{2} + \sum_{k=1}^{n-1} \cos{kx} + \frac{1}{2} \cos{nx} = \frac{1}{2} \sin{nx} \cot{ \left( \frac{1}{2}x \right)} , \quad x \in \RR,
\end{equation}
and the modified Fourier sum
$$
\mathscr{S}^*_n f (x) =  \frac{1}{\pi} \int_{-\pi}^{\pi} D^*_n(x-t)  f(t) dt.
$$
Clearly,
\begin{equation} \label{D-D*}
D_n(x) - D^*_n(x) = \frac{1}{2} \cos{nx},
\end{equation}
and
\begin{equation} \label{D-int}
\frac{1}{\pi} \int_{-\pi}^{\pi} D_n(x) dx = \frac{1}{\pi} \int_{-\pi}^{\pi} D^*_n(x) dx = 1.
\end{equation}

We also need the next result that follows immediately from~\cite[Chapter  II, (4.12) Theorem]{Zygmund:TrigSeries}.
\begin{lemma} \label{Lemma:BV_Estimate_ab} 
	Let $f \in \mathrm{BV}[-\pi,\pi]$. Then its Fourier coefficients \eqref{Fourier-a,b-coeff} 
	satisfy the estimate
	$$
	|a_n(f)| \le\frac{2V_{-\pi}^{\pi}(f)}{\pi n}, \quad |b_n(f)| \le\frac{2V_{-\pi}^{\pi}(f)}{\pi n}, \quad n \in \N.
	$$
\end{lemma}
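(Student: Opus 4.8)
The plan is to derive both estimates by a single application of Riemann–Stieltjes integration by parts, exploiting that $\cos(nt)$ and $\sin(nt)$ have elementary antiderivatives and that the total-variation measure $|df|$ of a BV function has total mass $V_{-\pi}^{\pi}(f)$. The point of the citation's word ``immediately'' is that, once the Stieltjes machinery is in place, each coefficient reduces to an integral of a bounded trigonometric factor against $df$, which is controlled by the variation.

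First I would treat the cosine coefficients. For $a_n(f)=\frac{1}{\pi}\int_{-\pi}^{\pi} f(t)\cos(nt)\,dt$, I would write $\cos(nt)\,dt=d\!\left(\frac{\sin nt}{n}\right)$ and integrate by parts. Since $f\in\mathrm{BV}[-\pi,\pi]$ and $t\mapsto\frac{\sin nt}{n}$ is continuously differentiable, the integral $\int_{-\pi}^{\pi}\frac{\sin nt}{n}\,df(t)$ exists as a Riemann–Stieltjes integral and
$$
\int_{-\pi}^{\pi} f(t)\cos(nt)\,dt=\left[f(t)\frac{\sin nt}{n}\right]_{-\pi}^{\pi}-\int_{-\pi}^{\pi}\frac{\sin nt}{n}\,df(t).
$$
The boundary term vanishes because $\sin(\pm n\pi)=0$ for integer $n$. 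Bounding $|\sin nt|\le 1$ and using $\int_{-\pi}^{\pi}|df|=V_{-\pi}^{\pi}(f)$ then gives the even sharper estimate $|a_n(f)|\le\frac{V_{-\pi}^{\pi}(f)}{\pi n}\le\frac{2V_{-\pi}^{\pi}(f)}{\pi n}$.

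Next I would treat the sine coefficients $b_n(f)$, integrating by parts with antiderivative $-\frac{\cos nt}{n}$. Here the boundary term equals $\frac{(-1)^n}{\pi n}\big(f(-\pi)-f(\pi)\big)$, which need not vanish: it does when $f$ is extended $2\pi$-periodically, but not for a general $f\in\mathrm{BV}[-\pi,\pi]$. This is the only genuinely delicate point, and it is precisely where the constant $2$ in the statement comes from. Estimating $|f(-\pi)-f(\pi)|\le V_{-\pi}^{\pi}(f)$ together with $\big|\int_{-\pi}^{\pi}\cos(nt)\,df(t)\big|\le V_{-\pi}^{\pi}(f)$ yields $|b_n(f)|\le\frac{2V_{-\pi}^{\pi}(f)}{\pi n}$, and stating both inequalities with the common constant $\frac{2}{\pi n}$ gives the uniform claim.

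The main thing to watch is thus not a deep obstacle but the bookkeeping of the boundary term in the sine case and the justification of the Stieltjes steps for a merely BV integrand, namely the existence of $\int_{-\pi}^{\pi} g\,df$ for continuous $g$, the integration-by-parts identity, and the identification of the total mass of $|df|$ with $V_{-\pi}^{\pi}(f)$. All of these are standard properties of BV functions, so the argument stays short and explains the asymmetry between the clean $a_n$ bound and the factor $2$ needed for $b_n$.
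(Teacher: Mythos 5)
Your proof is correct. Note first that the paper does not actually prove this lemma: it is obtained by citing Zygmund's Theorem (4.12) of Chapter II, which gives $|a_n|,|b_n|\le V/(\pi n)$ for a $2\pi$-periodic BV function with $V$ the variation over a full period; the factor $2$ in the paper's statement is then absorbed by periodizing $f\in\mathrm{BV}[-\pi,\pi]$, which may create a jump at $\pm\pi$ of size $|f(\pi)-f(-\pi)|\le V_{-\pi}^{\pi}(f)$ and hence at most doubles the variation. Your route is a direct Riemann--Stieltjes integration by parts, which is essentially the classical proof behind Zygmund's theorem, and it buys two things: it makes the provenance of the constant $2$ completely transparent (it is exactly the non-vanishing boundary term $\frac{(-1)^n}{\pi n}\bigl(f(-\pi)-f(\pi)\bigr)$ in the sine case, i.e.\ the same periodization jump seen from the other side), and it shows that the cosine bound actually holds with the sharper constant $\frac{V_{-\pi}^{\pi}(f)}{\pi n}$ since $\sin(\pm n\pi)=0$ kills the boundary term there. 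The supporting facts you invoke --- existence of $\int_{-\pi}^{\pi}g\,df$ for continuous $g$ and BV $f$, the integration-by-parts identity when one factor is continuous, the bound $\bigl|\int_{-\pi}^{\pi}g\,df\bigr|\le\|g\|_{\infty}V_{-\pi}^{\pi}(f)$, and the identification of $\int_{-\pi}^{\pi}f(t)\cos(nt)\,dt$ with the Stieltjes integral against the $C^1$ integrator $\sin(nt)/n$ --- are all standard, so the argument is complete.
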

A further property of the kernel $D^*_n$ which can be found in~\cite[Chapter  II, (8.2) Lemma]{Zygmund:TrigSeries} is 
\begin{lemma} \label{Lemma:Int_D_Estimate}
	There is a constant $C > 0$ such that for all $\xi \in [0,\pi]$ and all $n \in \N$
	\begin{equation} \label{Int_D_Estimate}
	\left| \frac{2}{\pi} \int_0^\xi D^*_n(x) dx \right| \le C.
	\end{equation}
\end{lemma}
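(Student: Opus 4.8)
The plan is to reduce the whole estimate to the classical fact that the sine integral $\Si(t)=\int_0^t \frac{\sin u}{u}\,du$ is bounded uniformly in $t\ge 0$. Using the closed form in~\eqref{Formule_ModifKernel}, I first rewrite
$$
\frac{2}{\pi}\int_0^\xi D^*_n(x)\,dx = \frac{1}{\pi}\int_0^\xi \sin(nx)\cot\left(\frac{x}{2}\right)dx.
$$
The only delicate feature is the singularity of $\cot(x/2)$ at $x=0$, so I would isolate its leading behaviour by splitting $\cot(x/2) = \frac{2}{x} + g(x)$, where $g(x) = \cot(x/2) - \frac{2}{x}$.

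The second step is to control the two resulting pieces separately. For the remainder $g$, the expansion $\cot(x/2) = \frac{2}{x} - \frac{x}{6} + O(x^3)$ near $0$ shows that $g$ extends continuously to $[0,\pi]$ with $g(0)=0$; hence $g$ is bounded there, and
$$
\left|\frac{1}{\pi}\int_0^\xi \sin(nx)\,g(x)\,dx\right| \le \frac{1}{\pi}\int_0^\pi |g(x)|\,dx \le \|g\|_\infty,
$$
uniformly in $n$ and $\xi$. For the principal piece, the substitution $u = nx$ gives
$$
\frac{1}{\pi}\int_0^\xi \sin(nx)\,\frac{2}{x}\,dx = \frac{2}{\pi}\int_0^{n\xi}\frac{\sin u}{u}\,du = \frac{2}{\pi}\,\Si(n\xi).
$$

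The third step invokes the classical fact that $\Si$ is uniformly bounded on $[0,\infty)$: it is continuous, and $\Si(t)\to \pi/2$ as $t\to\infty$, so $M := \sup_{t\ge 0}|\Si(t)| < \infty$. Combining the two bounds yields
$$
\left|\frac{2}{\pi}\int_0^\xi D^*_n(x)\,dx\right| \le \frac{2M}{\pi} + \|g\|_\infty =: C,
$$
a constant independent of $\xi \in [0,\pi]$ and $n\in\N$. The main (and essentially only) obstacle is the uniform-in-$t$ boundedness of $\Si$; everything else is the routine splitting-off of the $1/x$ singularity. Since this lemma is quoted from~\cite[Chapter~II, (8.2)~Lemma]{Zygmund:TrigSeries}, one may alternatively just cite the reference, but the argument above makes it self-contained.
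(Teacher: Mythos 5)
Your proof is correct, and it is essentially the same argument as the one in the paper's cited source (Zygmund, Chapter II, (8.2) Lemma), which the paper itself quotes without reproducing: split off the $2/x$ singularity of $\cot(x/2)$, absorb the bounded remainder, and reduce the main term via $u=nx$ to the uniform boundedness of the sine integral. Your explicit constant $\frac{2}{\pi}\sup_{t\ge 0}\left|\int_0^t \frac{\sin u}{u}\,du\right| + \|g\|_\infty$ is even consistent with the value $C=2$ claimed in Remark~\ref{Remark_Value_C}.
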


\begin{remark}\label{Remark_Value_C} 
Analyzing the proof of  this statement in~\cite[Chapter  II, (8.2) Lemma]{Zygmund:TrigSeries}, one can see that one can take $C =2 $, i.e.
	$$
	\left| \frac{2}{\pi} \int_0^\xi D^*_n(x) dx \right| \le 2, \quad \xi \in [0,\pi], \quad n \in \N.
	$$
\end{remark}

\subsection{Extension to special classes of real-valued functions of bounded variation} \label{Subsec_FourierClasses}

It is known that functions of bounded variation with values in an arbitrary complete metric space $(X,\rho)$ are not necessarily continuous, but have right and left limits at any point~\cite{Chistyakov:On_BV-mappings}. To study such functions we introduce the left and right local quasi-moduli for discontinuous functions of bounded variation.

\begin{defin}\label{Def_NewLocalModuli}
	For a function $f : [a,b] \to X$ of bounded variation and $x^* \in (a,b]$ we define the left local quasi-modulus
	$$
	\NewLeftLocalModul{f}{x^*}{\delta} = \sup{ \big \{ \rho(f(x^*-0),f(x)) \ : \ x \in [x^*-\delta,x^*) \cap [a,b] \big\} }, \quad \delta >0,
	$$
	and for  $x^* \in [a,b)$ the right local quasi-modulus
	$$
	\NewRightLocalModul{f}{x^*}{\delta} = \sup{ \{ \rho(f(x^*+0),f(x)) \ : \ x \in (x^*,x^* + \delta] \cap [a,b] \} }, \quad \delta >0,
	$$
	where $\displaystyle f(x-0) = \lim_{ t \to x-0 }f(t)$, $\displaystyle f(x+0) = \lim_{ t \to x+0 }f(t)$.
\end{defin}

The facts given in the following remark are direct consequences of the above definitions.
 
\begin{remark}\label{Remark_NewModuli} 
Let $f : [a,b] \to X$ be a BV function and $x^* \in (a,b]$ for the left modulus or $x^* \in [a,b)$ for the right modulus, respectively.
	\begin{enumerate}
		\item[(i)] If $f$ is monotone then
		$$ \NewLeftLocalModul{f}{x^*}{\delta} = \rho(f(x^*-0), f(x^* - \delta)),\quad
		\NewRightLocalModul{f}{x^*}{\delta} = \rho(f(x^*+\delta),f(x^* + 0)).$$
		
	\item[(ii)] Although at a point of discontinuity\, $x^*$ at least one of the local moduli $\LeftLocalModul{f}{x^*}{\delta}$, $\RightLocalModul{f}{x^*}{\delta}$ does not tend to zero as $\delta $ tends to zero, for the local quasi-moduli we always have	
		$$
		\lim_{ \delta \to 0^+ } \NewLeftLocalModul{f}{x^*}{\delta}=0, \quad \lim_{ \delta \to 0^+ } \NewRightLocalModul{f}{x^*}{\delta} =0.
		$$ 
			
	\item[(iii)] The left local quasi-modulus of  $f$  at a point  $x^* \in (a,b]$ coincides with the left local modulus~\eqref{defin_LeftModul} of the function
$$
\widetilde{f}(x) = \begin{cases}
f(x), & x \ne x^*,\\
f(x^*-0), & x = x^*.
\end{cases}
$$
An analogous relation holds for the right local quasi-modulus. Clearly, at a point of continuity of $f$ the one sided local quasi-moduli and the one-sided local moduli of Section~\ref{Sect_Prelim_Regularity} coincide.
\end{enumerate}
\end{remark}

In the next two lemmas we derive results similar to those in Section~\ref{Sect_MetricSelections} for the local one-sided moduli.
\begin{lemma} \label{Lemma_NewLeftModulu_ChainFunct}
	Let $F \in \mathcal{F}[a,b]$, $x^* \in (a,b]$ and $c_{\chi,\phi}$ be a chain function corresponding to a partition $\chi$ and a~metric chain $\phi$. Then
	$$
	\NewLeftLocalModul{v_{c_{\chi,\phi}}}{x^*}{\delta} \le \NewLeftLocalModul{v_F}{x^*}{\delta+ |\chi|}, \quad \delta >0.
	$$
\end{lemma}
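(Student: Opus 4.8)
The plan is to exploit that the variation functions $v_{c_{\chi,\phi}}$ and $v_F$ are both monotone non-decreasing, so that by Remark~\ref{Remark_NewModuli}(i) the two quasi-moduli collapse to plain differences of these functions. Writing $z = \max\{x^*-\delta,\, a\}$ and $z' = \max\{x^*-\delta-|\chi|,\, a\}$ (both lie in $[a,b]$, and $z < x^*$ since $x^*\in(a,b]$ and $\delta>0$), I would reduce the claim to
$$
v_{c_{\chi,\phi}}(x^*-0) - v_{c_{\chi,\phi}}(z) \;\le\; v_F(x^*-0) - v_F(z').
$$
It then suffices to establish, for every $x$ with $z \le x < x^*$, the local variation estimate $V_z^x(c_{\chi,\phi}) \le V_{z'}^x(F)$, and afterwards let $x \to x^{*-}$.

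To prove this local estimate I would locate the relevant partition intervals: with $\chi = \{x_0,\ldots,x_n\}$, write $z \in [x_k, x_{k+1})$ and $x \in [x_m, x_{m+1})$, where necessarily $k \le m$. Since $c_{\chi,\phi}$ is piecewise constant with $c_{\chi,\phi}\equiv y_i$ on $[x_i, x_{i+1})$ by~\eqref{def_ChainFunc}, its variation over $[z,x]$ equals its variation over $[x_k, x_m]$, namely $V_z^x(c_{\chi,\phi}) = \sum_{j=k}^{m-1} |y_{j+1}-y_j|$. The metric-chain property $(y_j,y_{j+1}) \in \Pair{F(x_j)}{F(x_{j+1})}$ yields $|y_{j+1}-y_j| \le \haus(F(x_j), F(x_{j+1})) \le V_{x_j}^{x_{j+1}}(F)$, so summing gives $V_z^x(c_{\chi,\phi}) \le V_{x_k}^{x_m}(F)$, exactly as in the proof of Lemma~\ref{lemma_LeftLocModuli_c_n<=LocModuli_v_f}.

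It remains to enlarge the right-hand interval to $[z', x]$, and this is where the shift by $|\chi|$ enters. Because $z \in [x_k, x_{k+1})$ we have $z - x_k < x_{k+1}-x_k \le |\chi|$, hence $z - |\chi| < x_k$; together with $a \le x_k$ this gives $z' = \max\{z-|\chi|,\, a\} \le x_k$. Combined with $x_m \le x$ and the monotonicity of the total variation with respect to the interval, we obtain $V_{x_k}^{x_m}(F) \le V_{z'}^x(F)$, completing the local estimate $V_z^x(c_{\chi,\phi}) \le V_{z'}^x(F)$. Passing to the limit $x \to x^{*-}$ converts this into $v_{c_{\chi,\phi}}(x^*-0) - v_{c_{\chi,\phi}}(z) \le v_F(x^*-0) - v_F(z')$, which is the reduced inequality above.

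The computations are routine; the main point to watch is the endpoint bookkeeping—using the left limit $f(x^*-0)$ rather than $f(x^*)$, clamping $z$ and $z'$ to $[a,b]$, and, above all, reading off the $|\chi|$-shift from the observation that the partition point $x_k$ immediately to the left of $z$ lies within $|\chi|$ of $z$. This last fact is precisely what turns the parameter $\delta$ on the left into $\delta + |\chi|$ on the right, and it is the only genuinely delicate step.
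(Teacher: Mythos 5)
Your proposal is correct and follows essentially the same route as the paper's proof: reduce the quasi-moduli to differences of the monotone variation functions, telescope the jumps of the piecewise-constant chain function through the metric-pair bound $|y_{j+1}-y_j|\le \haus(F(x_j),F(x_{j+1}))\le V_{x_j}^{x_{j+1}}(F)$, and absorb the shift to the nearest partition point on the left into the extra $|\chi|$. The only cosmetic difference is that you obtain the left limit $v_{c_{\chi,\phi}}(x^*-0)$ by letting $x\to x^{*-}$ in a local variation estimate, whereas the paper identifies $c_{\chi,\phi}(x^*-0)=c_{\chi,\phi}(x_{k-1})$ directly from the piecewise-constant structure; your explicit clamping of the endpoints to $[a,b]$ is a slightly more careful treatment of the boundary case.
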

\begin{proof}
	We estimate $\, \NewLeftLocalModul{v_{c_{\chi,\phi}}}{x^*}{\delta} = v_{c_{\chi,\phi}}(x^*-0) - v_{c_{\chi,\phi}}(x^* - \delta)$.
	Let ${\chi = \{ a=x_0 < x_1 < \cdots < x_m=b\}}$. If ${x^* \not\in \chi}$, then $x^* \in (x_{k-1},x_{k})$ with some $1 \le k \le m$. If $x^* \in \chi$, then $x^* = x_k$ for some $1 \le k \le m$. In both cases $c_{\chi,\phi}(x) = c_{\chi,\phi}(x_{k-1})$ for $x_{k-1} \le x < x^*$, so that  $c_{\chi,\phi}(x^*-0) = c_{\chi,\phi}(x_{k-1})$. If $x_{k-1} \le x^* - \delta < x^*$, then $c_{\chi,\phi}(x^* - \delta) = c_{\chi,\phi}(x_{k-1})$ and $v_{c_{\chi,\phi}}(x^*-0) - v_{c_{\chi,\phi}}(x^* - \delta) = 0$. Otherwise there is $0 \le i < k-1$ such that $x_i \le x^* - \delta < x_{i+1}$ and $c_{\chi,\phi}(x^* - \delta) = c_{\chi,\phi}(x_{i})$. By the definitions of the metric chain and of the chain function we have 
	\begin{align*}
	& v_{c_{\chi,\phi}}(x^*-0) - v_{c_{\chi,\phi}}(x^* - \delta) =  \sum_{j=i}^{k-2} |c_{\chi,\phi}(x_{j+1}) - c_{\chi,\phi}(x_j)|
	\le \sum_{j=i}^{k-2}  \haus(F(x_{j+1}), F(x_j)) \\
	& \le V_{x_i}^{x_{k-1}}(F) = v_F(x_{k-1}) - v_F(x_i) \le v_F(x^*-0) - v_F(x^*-\delta-|\chi|) = \NewLeftLocalModul{v_F}{x^*}{\delta+ |\chi|}
	\end{align*}
	and we obtain the claim.
\end{proof}

\begin{lemma}\label{Lemma_NewLeftModulu_MetricSelection}
	Let $F \in \mathcal{F}[a,b]$,  $x^* \in (a,b]$  and $s \in \mathcal{S}(F)$. Then
	$$
	\NewLeftLocalModul{v_s}{x^*}{\delta} \le \NewLeftLocalModul{v_F}{x^*}{2\delta}, \quad \delta >0.
	$$
\end{lemma}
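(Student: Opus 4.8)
The plan is to reduce everything to the variation functions and then pass to the limit through the chain functions that define $s$, using the factor of $2$ on the right-hand side to absorb the mesh size $|\chi|$. Since $s \in \mathcal{S}(F)$, it is BV by Result~\ref{Result_MetrSel_InheritVariation}, and there is a sequence of chain functions $c_n = c_{\chi_n,\phi_n}$ with $|\chi_n| \to 0$ converging pointwisely to $s$ on $[a,b]$. The variation function $v_s$ is monotone non-decreasing, so by Remark~\ref{Remark_NewModuli}(i) the quantity to estimate is $\NewLeftLocalModul{v_s}{x^*}{\delta} = v_s(x^*-0) - v_s(x^*-\delta)$; and since $v_s(y) - v_s(x^*-\delta) = V_{x^*-\delta}^{y}(s)$ for $x^*-\delta < y < x^*$, this equals $\lim_{y \to x^*-0} V_{x^*-\delta}^{y}(s)$.

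First I would fix $y \in (x^*-\delta, x^*)$ and apply Theorem~\ref{theorem_LimitFunc_BV} on the subinterval $[x^*-\delta, y]$ to the sequence $\{c_n\}$, obtaining $V_{x^*-\delta}^{y}(s) \le \liminf_{n\to\infty} V_{x^*-\delta}^{y}(c_n)$. Next, since $y < x^*$ and $v_{c_n}$ is monotone, I would bound $V_{x^*-\delta}^{y}(c_n) = v_{c_n}(y) - v_{c_n}(x^*-\delta) \le v_{c_n}(x^*-0) - v_{c_n}(x^*-\delta) = \NewLeftLocalModul{v_{c_n}}{x^*}{\delta}$, the last equality again by Remark~\ref{Remark_NewModuli}(i). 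Applying Lemma~\ref{Lemma_NewLeftModulu_ChainFunct} then gives $\NewLeftLocalModul{v_{c_n}}{x^*}{\delta} \le \NewLeftLocalModul{v_F}{x^*}{\delta+|\chi_n|} = v_F(x^*-0) - v_F(x^*-\delta-|\chi_n|)$.

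The factor of $2$ now does the work: for all $n$ large enough that $|\chi_n| < \delta$ we have $x^*-\delta-|\chi_n| > x^*-2\delta$, so by monotonicity of $v_F$ the last expression is at most $v_F(x^*-0) - v_F(x^*-2\delta) = \NewLeftLocalModul{v_F}{x^*}{2\delta}$, a bound independent of both $n$ and $y$. Taking $\liminf_{n\to\infty}$ yields $V_{x^*-\delta}^{y}(s) \le \NewLeftLocalModul{v_F}{x^*}{2\delta}$ for each such $y$, and finally letting $y \to x^*-0$ gives $\NewLeftLocalModul{v_s}{x^*}{\delta} \le \NewLeftLocalModul{v_F}{x^*}{2\delta}$, as claimed.

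I expect the main obstacle to be the careful bookkeeping of the one-sided limits and the order of the limiting operations: the target quantity $\NewLeftLocalModul{v_s}{x^*}{\delta}$ is itself defined through the left limit $v_s(x^*-0)$, so it is essential to first freeze $y$ strictly below $x^*$, carry out the $n\to\infty$ passage there (where Theorem~\ref{theorem_LimitFunc_BV} applies cleanly to a fixed subinterval), obtain an estimate uniform in $y$, and only afterwards send $y \to x^*-0$. The slack of $2\delta$ against $\delta + |\chi_n|$ is precisely what produces the $n$-uniform bound, sparing one from having to track the left limit $v_F((x^*-\delta)-0)$.
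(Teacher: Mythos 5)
Your proposal is correct and follows essentially the same route as the paper's proof: the paper likewise fixes a point $x^*-t$ with $0<t<\delta$, bounds $V_{x^*-\delta}^{x^*-t}(c_n)$ via Lemma~\ref{Lemma_NewLeftModulu_ChainFunct} by $\NewLeftLocalModul{v_F}{x^*}{\delta+|\chi_n|}\le\NewLeftLocalModul{v_F}{x^*}{2\delta}$, passes to the limit in $n$ with Theorem~\ref{theorem_LimitFunc_BV}, and only then lets $t\to 0+$. Your ordering of the limiting operations and the use of the slack $2\delta$ versus $\delta+|\chi_n|$ match the paper exactly.
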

\begin{proof}
	Let $s \in \mathcal{S}(F)$ and $\delta > 0$. There exists a sequence of chain functions $\{c_n\}_{n \in \N}$ that corresponds to a sequence of partitions $\{\chi_n\}_{n \in \N}$ with   $|\chi_n| \to 0$ as $n \to \infty$ such that $s(x) = \lim_{n \to \infty}{c_n(x)}$, $x \in [a,b]$. Take $N \in \N$ so large that $|\chi_n| < \delta$ for all $n \ge N$. 
	
	We estimate $\NewLeftLocalModul{v_s}{x^*}{\delta} = v_s(x^* - 0) - v_s(x^* - \delta)$. Take $0 < t < \delta$. For each $n \ge N$ we have by Lemma~\ref{Lemma_NewLeftModulu_ChainFunct}
	$$
	V_{x^*-\delta}^{x^*-t}(c_n)=v_{c_n}(x^* - t) - v_{c_n}(x^* - \delta) \le \NewLeftLocalModul{v_{c_n}}{x^*}{\delta} \le \NewLeftLocalModul{v_F}{x^*}{\delta+ |\chi_n|} \le	\NewLeftLocalModul{v_F}{x^*}{2\delta}.
	$$
	By~Theorem \ref{theorem_LimitFunc_BV} we have
	$$
	v_{s}(x^* - t) - v_{s}(x^* - \delta)=V_{x^*-\delta}^{x^*-t}(s) \le \liminf\limits_{n \to \infty} V_{x^*-\delta}^{x^*-t}(c_n) \le \NewLeftLocalModul{v_F}{x^*}{2\delta}.
	$$
	Taking the limit as $t \to 0+$ we obtain the claim.
\end{proof}

Note that we cannot expect a bound for $\NewRightLocalModul{v_{c_{\chi,\phi}}}{x^*}{\delta}$ in terms of $\NewRightLocalModul{v_F}{x^*}{\delta+ \varepsilon}$. The reason is that in the definition of the chain function we use values on the left of a point $x^*$ that we cannot control by $\NewRightLocalModul{v_F}{x^*}{\delta}$. However, the following estimates hold true for a metric selection $s$.

\begin{lemma}\label{Lemma_NewRightModulu_MetricSelection}
	Let $F \in \mathcal{F}[a,b]$, $x^* \in [a,b)$ and $s \in \mathcal{S}(F)$. Then
	$$
	\NewRightLocalModul{v_s}{x^*}{\delta} \le \NewRightLocalModul{v_F}{x^*}{\delta}, \quad \delta >0.
	$$
\end{lemma}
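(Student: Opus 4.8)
The plan is to follow the pattern of the proof of Lemma~\ref{Lemma_NewLeftModulu_MetricSelection}, but since the discussion preceding the lemma already explains that no chain-function analogue of Lemma~\ref{Lemma_NewLeftModulu_ChainFunct} is available on the right, I will estimate the variation of $s$ directly instead of routing it through the right quasi-modulus of a chain function. The first step is to reduce everything to variations. Because $v_s$ and $v_F$ are monotone non-decreasing, Remark~\ref{Remark_NewModuli}(i) gives $\NewRightLocalModul{v_s}{x^*}{\delta} = v_s(x^*+\delta) - v_s(x^*+0) = \lim_{t\to 0^+} V_{x^*+t}^{x^*+\delta}(s)$ and $\NewRightLocalModul{v_F}{x^*}{\delta} = v_F(x^*+\delta) - v_F(x^*+0)$. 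Hence it suffices to bound $V_{x^*+t}^{x^*+\delta}(s) \le v_F(x^*+\delta) - v_F(x^*+0)$ for each fixed $t \in (0,\delta)$ and then let $t \to 0^+$.

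Next I pass through chain functions. Writing $s = \lim_{n\to\infty} c_n$ pointwise with $|\chi_n| \to 0$, I fix $t$ and take $n$ so large that $|\chi_n| < t$. Since $c_n$ is piecewise constant on $\chi_n$ and consecutive values form a metric pair, the variation $V_{x^*+t}^{x^*+\delta}(c_n)$ telescopes into $\sum_{j} |c_n(x_{j+1}) - c_n(x_j)| \le \sum_j \haus(F(x_{j+1}),F(x_j))$ over the partition points of $\chi_n$ lying in $[x^*+t,\,x^*+\delta]$, and is therefore bounded by $V_{x_p}^{x_q}(F) = v_F(x_q) - v_F(x_p)$, where $x_p$ is the last partition point $\le x^*+t$ and $x_q$ the last one $\le x^*+\delta$.

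The crucial point, and the place where the right side genuinely differs from the chain-function situation, is the location of $x_p$. Because $|\chi_n| < t$, the point $x_p$ satisfies $x_p > (x^*+t) - |\chi_n| > x^*$, so it lies strictly to the right of $x^*$, while $x_q \le x^*+\delta$. Monotonicity of $v_F$ then gives $v_F(x_p) \ge v_F(x^*+0)$ and $v_F(x_q) \le v_F(x^*+\delta)$, whence $V_{x^*+t}^{x^*+\delta}(c_n) \le v_F(x^*+\delta) - v_F(x^*+0)$ for all such $n$. Applying the lower semicontinuity of the total variation under pointwise limits, Theorem~\ref{theorem_LimitFunc_BV}, on the interval $[x^*+t, x^*+\delta]$ yields $V_{x^*+t}^{x^*+\delta}(s) \le \liminf_{n\to\infty} V_{x^*+t}^{x^*+\delta}(c_n) \le v_F(x^*+\delta) - v_F(x^*+0)$, and letting $t \to 0^+$ finishes the argument.

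The main obstacle is exactly the asymmetry flagged in the remark before the lemma: the value $c_n(x^*+0)$ is determined by $F$ to the left of $x^*$, so the right quasi-modulus of $c_n$ at $x^*$ cannot be controlled, and the jump of $F$ at $x^*$ threatens to contaminate the estimate. The device that circumvents this is to never evaluate at $x^*$ itself: by working on $[x^*+t, x^*+\delta]$ with $t>0$ and only sending $t \to 0^+$ after the bound is established, the constraint $|\chi_n| < t$ pushes the relevant left partition point $x_p$ past $x^*$, so the jump at $x^*$ is excluded and it is $v_F(x^*+0)$, rather than $v_F(x^*)$, that appears on the right-hand side.
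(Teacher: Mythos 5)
Your proposal is correct and follows essentially the same route as the paper's proof: both work on $[x^*+t,\,x^*+\delta]$ with $t>0$ fixed, use $|\chi_n|<t$ to force the last partition point of $\chi_n$ not exceeding $x^*+t$ to lie strictly to the right of $x^*$ (so the telescoped chain-function variation is bounded by $v_F(x^*+\delta)-v_F(x^*+0)$), then invoke Theorem~\ref{theorem_LimitFunc_BV} and let $t\to 0^+$. The "never evaluate at $x^*$ itself" device you highlight is precisely the mechanism in the paper's argument.
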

\begin{proof}	
	Let $s \in \mathcal{S}(F)$ and $\delta > 0$. Let $\{c_n\}_{n \in \N}$ be a sequence of chain functions like in the proof of Lemma~\ref{Lemma_NewLeftModulu_MetricSelection}.
	We estimate $ \NewRightLocalModul{v_s}{x^*}{\delta} = v_{s}(x^* + \delta) - v_{s}(x^* + 0)$.
	Take $0 < t <\delta$.  There is $N \in \N$ such that $|\chi_n| < t$ for all $n \ge N$. Then the interval $(x^*, x^* + t)$ contains at least one point of the partition $\chi_n$, $n \ge N$.  Let $\chi_n = \{ a=x_0^n < x_1^n < \cdots < x_{m(n)}^n=b\}$. There is $0 \le k(n) \le m(n)-1$ such that $x^* + t \in [x_{k(n)}^n,x_{k(n)+1}^n)$.  It holds $x_{k(n)}^n > x^*$. 
	
	If $x^* + \delta \in [x_{k(n)}^n,x_{k(n)+1}^n)$, then $c_n(x^* + t) = c_n(x^* + \delta) = c_n(x_{k(n)}^n)$, so that $v_{c_n}(x^* + \delta) - v_{c_n}(x^* + t) = 0$. Otherwise there is $k(n) < i(n) \le m(n)-1$ such that $x^* + \delta \in [x_{i(n)}^n, x_{i(n)+1}^n)$, or $x^* + \delta = b = x_{m(n)}^n$ so that $i(n) = m(n)$. In both cases $c_n(x^* + \delta) = c_n(x_{i(n)}^n)$. Therefore,
	\begin{align*}
	V_{x^*+t}^{x^*+\delta}(c_n)&= v_{c_n}(x^* + \delta) - v_{c_n}(x^* + t) =  \sum_{j=k(n)}^{i(n)-1} |c_n(x_{j+1}^n) - c_n(x_j^n)|
	\le \sum_{j=k(n)}^{i(n)-1}  \haus(F(x_{j+1}^n), F(x_j^n)) \\
	& \le V_{x_{k(n)}}^{x_{i(n)}}(F) = v_F(x_{i(n)}) - v_F(x_{k(n)}) \le v_F(x^* + \delta) - v_F(x^*+0)  = 	\NewRightLocalModul{v_F}{x^*}{\delta}
	\end{align*}
	for each $n \ge N$. By~Theorem \ref{theorem_LimitFunc_BV} we have
	$$
	v_{s}(x^* + \delta) - v_{s}(x^* + t)=V_{x^*+t}^{x^*+\delta}(s) \le \liminf\limits_{n \to \infty} V_{x^*+t}^{x^*+\delta}(c_n) \le \NewRightLocalModul{v_F}{x^*}{\delta}.
	$$
	Taking the limit as $t \to 0+$ we obtain the claim.	
\end{proof}

In the next definition we introduce several classes of periodic vector-valued functions. 

\begin{defin}\label{Def_EqvivClass}
		Given $B > 0$, a point $x \in \R$, a closed interval $I \subset \R$ and a modulus-bounding function $\omega$, we define the following classes of functions.
	\begin{enumerate}
		\item[(i)] $\EqvClass{d}{B}{x}{\omega}$  is the class of all $2\pi$-periodic functions  $f : \RR \to \RR^d$ satisfying
		$$		
		V_{-\pi}^{\pi}(f) \le B \quad  \mbox{and} \quad 
		\NewLeftLocalModul{v_f}{x}{\delta} \le \omega(\delta), \quad \NewRightLocalModul{v_f}{x}{\delta} \le \omega(\delta)
		$$
		for all $0< \delta  \le \pi$.
		
		\item[(ii)]  $\EqvClass{d}{B}{I}{\omega} = {\displaystyle \bigcap_{z\in I}\EqvClass{d}{B}{z}{\omega}} $.
		
		\item[(iii)]  $\mathscr{C}\EqvClass{d}{B}{I}{\omega} = \EqvClass{d}{B}{I}{\omega} \cap \cal{C}_d(I) $, where $\cal{C}_d(I)$ is the class of functions $f : \RR \to \RR^d$ which are continuous on $I$.
	\end{enumerate}
\end{defin}

\begin{remark} \label{Remark_Class_F_Coordinates_Jump}
	It is easy to conclude from the equivalence of norms on $\Rd$ that if $f : \RR \to \RR^d$, $f = \begin{pmatrix} f_1 \\ \vdots \\ f_d \end{pmatrix}$, and $f \in \EqvClass{d}{B}{x}{\omega}$, then $f_j \in \EqvClass{1}{KB}{x}{K\omega}$, $j = 1, \ldots, d$, with a constant $K > 0$ depending only on the underlying norm on $\Rd$.
\end{remark}
In view of Remark~\ref{Remark_Class_F_Coordinates_Jump} we formulate the subsequent results only for functions $f : \RR \to \RR$.

The theorem below is an extension of the Dirichlet-Jordan Theorem for the class  $\EqvClass{1}{B}{x}{\omega}$.
To establish the result, we carefully go through the proof of the Dirchlet-Jordan Theorem in~\cite[Chapter  II]{Zygmund:TrigSeries} and examine the estimates. The proof is given in Appendix B. 

\begin{theorem}\label{Theo_FourierConv_Class_Jump}
	Let $B > 0$, $x \in \R$ and $\omega$ be a modulus-bounding  function. Then for all $f \in \EqvClass{1}{B}{x}{\omega}$ and each  $\delta\in(0,\pi]$  we have
	\begin{equation} \label{Fourier_Class_Est}
	\left|  \mathscr{S}_n f(x) - \frac{1}{2}\big ( f(x+0) + f(x-0) \big ) \right| \le \frac{2B}{\pi n} \left( 1+6\cot \left(\frac{\delta}{2} \right) \right)+8C\omega(\delta), \quad n\in\N,
	\end{equation}
	where $C$ is the constant from Lemma~\ref{Lemma:Int_D_Estimate}.
\end{theorem}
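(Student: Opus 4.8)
The plan is to follow the classical proof of the Dirichlet--Jordan Theorem in~\cite[Chapter II]{Zygmund:TrigSeries}, but to track the quantitative dependence on $B$, $\delta$ and $\omega$ throughout. Starting from the Dirichlet representation~\eqref{Dirichlet-repr} and the identity~\eqref{D-D*}, I would first write
$$
\mathscr{S}_n f(x) = \mathscr{S}^*_n f(x) + \big(\mathscr{S}_n f(x) - \mathscr{S}^*_n f(x)\big),
$$
and observe that, expanding $\cos(n(x-t))$, the difference equals $\tfrac12\big(a_n(f)\cos nx + b_n(f)\sin nx\big)$. Lemma~\ref{Lemma:BV_Estimate_ab} then bounds it by $\tfrac12(|a_n|+|b_n|)\le \tfrac{2B}{\pi n}$, which accounts for the leading ``$1$'' inside the factor $1+6\cot(\delta/2)$. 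It remains to estimate $\mathscr{S}^*_n f(x) - \tfrac12\big(f(x+0)+f(x-0)\big)$.

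Since $D^*_n$ is even and $\tfrac1\pi\int_0^\pi D^*_n = \tfrac12$ by~\eqref{D-int}, I would recenter and symmetrize this remaining difference as
$$
\frac1\pi \int_0^\pi D^*_n(u)\Big[\big(f(x+u)-f(x+0)\big) + \big(f(x-u)-f(x-0)\big)\Big]\,du,
$$
where each bracketed increment tends to $0$ as $u\to 0^+$ (the one-sided limits exist since $f\in\mathrm{BV}$). The reason for phrasing the hypotheses through the monotone variation function is precisely that $\NewRightLocalModul{v_f}{x}{\delta} = v_f(x+\delta)-v_f(x+0)$ by Remark~\ref{Remark_NewModuli}; hence the right increment $g_+(u):=f(x+u)-f(x+0)$ satisfies both $|g_+(u)|\le\omega(\delta)$ and $V_{0}^{\delta}(g_+)\le\omega(\delta)$ on $(0,\delta]$, and symmetrically for the left increment. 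I would then split each of the two integrals at the cut point $\delta$ and treat $[0,\delta]$ and $[\delta,\pi]$ separately.

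On the near interval I would invoke the uniform primitive bound $\big|\tfrac2\pi\int_0^\xi D^*_n\big|\le C$ from Lemma~\ref{Lemma:Int_D_Estimate} (with $C=2$ by Remark~\ref{Remark_Value_C}). Decomposing $g_+$ on $[0,\delta]$ into a difference of two non-decreasing functions vanishing at $0^+$ and applying Bonnet's second mean value theorem — equivalently, a Riemann--Stieltjes integration by parts against $\int_0^\xi D^*_n$ — bounds $\big|\tfrac1\pi\int_0^\delta D^*_n g_+\big|$ by a constant multiple of $C\,V_0^\delta(g_+)\le C\omega(\delta)$, and the left increment is identical; tracking the constants through the two-sided estimate yields the $8C\omega(\delta)$ term. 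On the far interval I would use $D^*_n(u)=\tfrac12\sin(nu)\cot(u/2)$ from~\eqref{Formule_ModifKernel} and integrate $\int_\delta^\pi \sin(nu)\,\psi(u)\,du$ by parts against $\cos(nu)$, with $\psi(u)=\cot(u/2)\,g_\pm(u)$. Because $\cot(\pi/2)=0$, the surviving boundary and total-variation terms are controlled by $\|g_\pm\|_\infty\le V_{-\pi}^{\pi}(f)\le B$, by $V_\delta^\pi(g_\pm)\le B$, and by the monotonicity of $\cot(u/2)$ on $[\delta,\pi]$ (whose variation there equals $\cot(\delta/2)$), producing a bound of the form $\tfrac{\mathrm{const}\cdot B\cot(\delta/2)}{n}$ that supplies the $\tfrac{2B}{\pi n}\cdot 6\cot(\delta/2)$ term.

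I expect the main obstacle to be the far interval: one must simultaneously harvest the $O(1/n)$ gain from the oscillation of $\sin nu$ and absorb the blow-up of the amplitude $\cot(u/2)$ as $u\to\delta^+$, which forces a careful product-variation estimate of the type $V_\delta^\pi\big(\cot(u/2)g_\pm\big) \le \|\cot(u/2)\|_\infty\,V_\delta^\pi(g_\pm) + \|g_\pm\|_\infty\,V_\delta^\pi(\cot(u/2))$. The qualitative convergence is immediate once the near and far regimes are separated — the near part is killed uniformly in $n$ by the quasi-modulus hypothesis on $v_f$, and the far part vanishes like $1/n$ for fixed $\delta$ — but the explicit, $n$-uniform bound~\eqref{Fourier_Class_Est} valid for every admissible cut $\delta\in(0,\pi]$ is what demands the delicate bookkeeping of constants described above.
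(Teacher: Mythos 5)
Your proposal is correct and follows essentially the same route as the paper's proof in Appendix~B: isolate $\mathscr{S}_nf-\mathscr{S}^*_nf$ via Lemma~\ref{Lemma:BV_Estimate_ab}, symmetrize about $x$ using the evenness of $D^*_n$ and \eqref{D-int}, split at $\delta$, handle $[0,\delta]$ by Jordan decomposition plus the second mean value theorem and Lemma~\ref{Lemma:Int_D_Estimate}, and handle $[\delta,\pi]$ by the product-variation estimate together with the BV Fourier-coefficient bound (your integration by parts against $\cos nu$ is just Lemma~\ref{Lemma:BV_Estimate_ab} unwound). The only cosmetic difference is that the paper works with the single even function $r(t)=\tfrac12\big(f(x+t)+f(x-t)\big)-\tfrac12\big(f(x+0)+f(x-0)\big)$ and its monotone parts $r^{\pm}$, whereas you keep the two one-sided increments separate; the estimates are identical in substance.
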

\noindent In view of Remark~\ref{Remark_Value_C} one can take $C=2$ in~\eqref{Fourier_Class_Est}. 
\medskip 

The next corollary follows from the above theorem.

\begin{corol}\label{lemma_Convergence}
Let $B > 0$, $x \in \R$ and $\omega$ be a modulus-bounding  function satisfying  $\lim_{ \delta \to 0^+ }\omega(\delta)=0$. Then
	$$
	\lim_{ n \to \infty }\, \sup\left\{ \left|  \mathscr{S}_n f(x) - \frac{1}{2}\big(f(x+0) + f(x-0)\big) \right| \ : \ f\in\EqvClass{1}{B}{x}{\omega} \right\} =0.
	$$
\end{corol}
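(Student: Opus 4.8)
The plan is to deduce everything from the uniform error estimate already established in Theorem~\ref{Theo_FourierConv_Class_Jump}. The crucial feature of the bound \eqref{Fourier_Class_Est} is that its right-hand side depends only on $B$, $\omega$, $\delta$ and $n$, and not on the individual function $f$. Consequently, I would first take the supremum over all $f \in \EqvClass{1}{B}{x}{\omega}$ on the left-hand side of \eqref{Fourier_Class_Est}, which yields, for every $\delta \in (0,\pi]$ and every $n \in \N$,
$$
\sup\left\{ \left| \mathscr{S}_n f(x) - \tfrac{1}{2}\big(f(x+0)+f(x-0)\big) \right| : f \in \EqvClass{1}{B}{x}{\omega} \right\} \le \frac{2B}{\pi n}\left(1 + 6\cot\left(\frac{\delta}{2}\right)\right) + 8C\omega(\delta).
$$

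From this single inequality the convergence follows by the standard two-step $\varepsilon$-argument, the only subtlety being the order in which the two free parameters are chosen. Given $\varepsilon > 0$, I would first exploit the hypothesis $\lim_{\delta \to 0^+}\omega(\delta)=0$ to fix a value $\delta \in (0,\pi]$ small enough that $8C\omega(\delta) < \varepsilon/2$. With this $\delta$ held fixed, $\cot(\delta/2)$ is a finite constant, so the first term on the right, $\frac{2B}{\pi n}\big(1 + 6\cot(\delta/2)\big)$, tends to $0$ as $n \to \infty$; hence there is an $N$ such that it stays below $\varepsilon/2$ for all $n \ge N$. For such $n$ the supremum is then below $\varepsilon$, and since $\varepsilon$ was arbitrary the limit equals $0$.

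The one place to be careful---and the closest thing to an obstacle---is precisely this ordering of quantifiers: the parameter $\delta$ must be selected before $n$, since the factor $\cot(\delta/2)$ diverges as $\delta \to 0^+$ and so one cannot let $\delta \to 0$ and $n \to \infty$ simultaneously. Beyond this routine bookkeeping there is no genuine difficulty, as all the analytic substance has already been packaged into the uniform estimate of Theorem~\ref{Theo_FourierConv_Class_Jump}; the corollary is essentially a reformulation of that theorem made possible by the $f$-independence of its bound.
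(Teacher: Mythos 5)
Your proof is correct and follows essentially the same route as the paper: both fix $\delta$ first so that $8C\omega(\delta)<\varepsilon/2$ (the paper's choice $\omega(\delta)<\varepsilon/(16C)$ is exactly this), then choose $n$ large enough to kill the $f$-independent first term of the bound from Theorem~\ref{Theo_FourierConv_Class_Jump}. Your remark on the order of choosing $\delta$ before $n$ is the same point the paper's proof makes implicitly.
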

\begin{proof}
	Take an arbitrary $\eps>0$. Fix $\delta>0$ such that $\omega(\delta)<\frac{\eps}{16C}$. Choose $n$ large enough such that ${\frac{2B}{\pi n} \left( 1+6\cot \left(\frac{\delta}{2} \right) \right) < \frac{\varepsilon}{2} }$. Then by~\eqref{Fourier_Class_Est} we have
	$$
	\left|  \mathscr{S}_n f(x) - \frac{1}{2}\big(f(x+0) + f(x-0)\big) \right| < \frac{\eps}{2} + \frac{\eps}{2} = \varepsilon 
	$$
	for all $f \in \EqvClass{1}{B}{x}{\omega}$, and the statement follows.
\end{proof}

For $ f \in \EqvClass{1}{B}{I}{\omega}$ the estimate in the right-hand side of~\eqref{Fourier_Class_Est}  does not depend on $x \in I$. We arrive at the following statement.

\begin{corol}\label{Corol_UniformConv_BV}
	Let $B > 0$, $I \subset \R$ be a closed interval and $\omega$ be a modulus-bounding  function satisfying  ${\lim_{\delta \to 0+}{\omega(\delta)} = 0}$. Then
	$$
	\lim_{ n \to \infty }\, \sup\left\{ \left|  \mathscr{S}_n f(x) - \frac{1}{2}\big(f(x+0) + f(x-0)\big ) \right| \ : \ x\in I, \ f\in\EqvClass{1}{B}{I}{\omega} \right\} =0.
	$$
\end{corol}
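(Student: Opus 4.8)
The plan is to exploit the crucial feature of Theorem~\ref{Theo_FourierConv_Class_Jump}: the bound in~\eqref{Fourier_Class_Est} depends on the pair $(f,x)$ only through the membership $f \in \EqvClass{1}{B}{x}{\omega}$, whereas the right-hand side itself, namely $\frac{2B}{\pi n}\left(1+6\cot\left(\frac{\delta}{2}\right)\right)+8C\omega(\delta)$, is completely free of $x$ and of the particular function $f$. Since by Definition~\ref{Def_EqvivClass}(ii) we have $\EqvClass{1}{B}{I}{\omega} = \bigcap_{z\in I}\EqvClass{1}{B}{z}{\omega}$, every $f \in \EqvClass{1}{B}{I}{\omega}$ automatically lies in $\EqvClass{1}{B}{x}{\omega}$ for each individual $x \in I$. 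Hence the estimate~\eqref{Fourier_Class_Est} holds simultaneously for all pairs $(x,f)$ with $x \in I$ and $f \in \EqvClass{1}{B}{I}{\omega}$, with one and the same right-hand side.

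Concretely, I would fix an arbitrary $\varepsilon > 0$ and first choose $\delta \in (0,\pi]$ so that $\omega(\delta) < \frac{\varepsilon}{16C}$, which is possible because $\lim_{\delta \to 0^+}\omega(\delta) = 0$. With $\delta$ now fixed, $\cot\left(\frac{\delta}{2}\right)$ is a fixed finite constant, so I can choose $N \in \N$ large enough that $\frac{2B}{\pi n}\left(1+6\cot\left(\frac{\delta}{2}\right)\right) < \frac{\varepsilon}{2}$ for all $n \ge N$. Then for every $n \ge N$, every $x \in I$, and every $f \in \EqvClass{1}{B}{I}{\omega}$, the right-hand side of~\eqref{Fourier_Class_Est} is bounded by $\frac{\varepsilon}{2} + 8C\omega(\delta) < \frac{\varepsilon}{2} + \frac{\varepsilon}{2} = \varepsilon$. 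Taking the supremum over $x \in I$ and $f \in \EqvClass{1}{B}{I}{\omega}$ then shows that the supremum appearing in the statement is at most $\varepsilon$ for all $n \ge N$, which yields the claimed limit.

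This is essentially the argument already used for Corollary~\ref{lemma_Convergence}, the only additional ingredient being the $x$-uniformity, which is built into the bound of Theorem~\ref{Theo_FourierConv_Class_Jump}. For this reason I expect no genuine obstacle: the single point requiring care is to make the order of the choices explicit---$\delta$ must be selected before $N$, since $N$ depends on $\delta$ through $\cot\left(\frac{\delta}{2}\right)$---and to observe that the independence of the bound from both $x$ and $f$ is exactly what upgrades the pointwise statement of Corollary~\ref{lemma_Convergence} to the uniform one over the interval $I$.
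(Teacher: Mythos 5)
Your proposal is correct and follows essentially the same route as the paper, which likewise observes that the bound in~\eqref{Fourier_Class_Est} is independent of $x\in I$ and of $f\in\EqvClass{1}{B}{I}{\omega}$ and then repeats the $\delta$-before-$N$ choice from the proof of Corollary~\ref{lemma_Convergence}. Nothing is missing.
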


Finally, if $f$ is in addition continuous in $I$ then the Fourier series of $f$ converges to $f$ on $I$, and the statement above takes the following form.

\begin{corol}\label{Corol_UniformConv_CBV}
	Under the assumptions of Corollary~\ref{Corol_UniformConv_BV} we have
	$$
	\lim_{ n \to \infty }\, \sup\left\{ \left|  \mathscr{S}_n f(x) - f(x) \right| \ : \  x \in I, \; f\in \mathscr{C}\EqvClass{1}{B}{I}{\omega} \right\} =0.
	$$
\end{corol}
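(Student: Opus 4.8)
The plan is to obtain this corollary as an immediate specialization of Corollary~\ref{Corol_UniformConv_BV}: passing to the subclass $\mathscr{C}\EqvClass{1}{B}{I}{\omega}$ and invoking continuity collapses the symmetrized limit $\frac{1}{2}\big(f(x+0)+f(x-0)\big)$ to the value $f(x)$, and nothing more than that is needed.

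First I would record the inclusion $\mathscr{C}\EqvClass{1}{B}{I}{\omega} = \EqvClass{1}{B}{I}{\omega} \cap \mathcal{C}_1(I) \subseteq \EqvClass{1}{B}{I}{\omega}$ from Definition~\ref{Def_EqvivClass}(iii). Since the quantity under the supremum is nonnegative, restricting the supremum from the larger class $\EqvClass{1}{B}{I}{\omega}$ to the smaller class $\mathscr{C}\EqvClass{1}{B}{I}{\omega}$ can only decrease it. Hence Corollary~\ref{Corol_UniformConv_BV} already yields
$$
\lim_{n \to \infty} \sup\left\{ \left| \mathscr{S}_n f(x) - \tfrac{1}{2}\big(f(x+0)+f(x-0)\big) \right| : x \in I,\ f \in \mathscr{C}\EqvClass{1}{B}{I}{\omega} \right\} = 0.
$$

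The decisive step is the observation that for $f \in \mathscr{C}\EqvClass{1}{B}{I}{\omega}$ the continuity of $f$ at each point $x \in I$, together with the existence of one-sided limits for BV functions, gives $f(x-0) = f(x+0) = f(x)$, so that $\frac{1}{2}\big(f(x+0)+f(x-0)\big) = f(x)$ for every $x \in I$. Substituting this identity into the displayed limit replaces the symmetrized value by $f(x)$ and produces precisely the asserted statement. The only point requiring care is the behavior at the endpoints of $I$: there I would read ``continuous on $I$'' as full (two-sided) continuity of $f$ at each point of $I$, consistent with the remark preceding the corollary, which guarantees that both one-sided limits coincide with $f(x)$ even at the endpoints. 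Beyond this interpretive remark there is no genuine obstacle, since all the analytic content has already been supplied by Theorem~\ref{Theo_FourierConv_Class_Jump} and Corollary~\ref{Corol_UniformConv_BV}.
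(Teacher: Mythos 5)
Your proposal is correct and coincides with the paper's own (essentially one-line) justification: the paper simply notes that continuity of $f$ on $I$ makes $\frac{1}{2}\big(f(x+0)+f(x-0)\big)=f(x)$, so Corollary~\ref{Corol_UniformConv_CBV} follows from Corollary~\ref{Corol_UniformConv_BV} by restricting the supremum to the subclass $\mathscr{C}\EqvClass{1}{B}{I}{\omega}$. Your reading of ``continuous on $I$'' as two-sided continuity at every point of $I$ (the functions being defined on all of $\RR$) is the intended one, so the endpoint caveat is resolved exactly as you suggest.
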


\medskip


\subsection{Extension to SVFs}\label{Subsec_FourierSVFs}
We define the Fourier series of set-valued functions via the integral representation~\eqref{Dirichlet-repr} using the weighted metric integral.

\begin{defin}\label{Defin_FourierSeries}
	Let $\Map{[-\pi,\pi]}$. The \textbf{metric Fourier series} of $F$ is the sequence of the set-valued functions $\{\mathscr{S}_nF\}_{n \in \N}$, where $\mathscr{S}_nF $ is a SVF defined by
	$$
	\mathscr{S}_nF (x) = \frac{1}{\pi} {\scriptstyle(\cal M_{\partial_{n,x}})} \int_{-\pi}^{\pi} \partial_{n,x}(t)  F(t) dt, \quad x \in [-\pi,\pi], \quad n \in \N,
	$$
	whenever the integrals above exist.
\end{defin}

For $F \in \mathcal{F}[-\pi,\pi]$ the integrals in Definition~\ref{Defin_FourierSeries} exist. Moreover, each $\partial_{n,x} = D_n(x - \cdot)$ for  fixed $n \in \N$ and ${x \in \RR}$ is of bounded variation on each finite interval. Hence, if $F \in  \mathcal{F}[-\pi,\pi]$, then  the set-valued functions ${\mathscr{S}_nF}$  have compact images by Proposition~\ref{WeihtedMetInt_is_compact}.  By Theorem~\ref{Theo_W-MetrInt=W-IntOfMetrSel} we have
\begin{equation} \label{Fourier_Repr_Metric_Sel}
\mathscr{S}_nF (x) = \left \{ \mathscr{S}_n s (x) \ : \ s \in\mathcal{S}(F) \right \} = \left\{ \frac{1}{\pi} \int_{-\pi}^{\pi} D_n(x-t) s(t) dt \ : \ s \in\mathcal{S}(F) \right\}, \quad x \in [-\pi,\pi].
\end{equation}

Note that we do not expect  metric selections $s$ in this definition to be periodic. In fact, even if the set-valued function $F$ itself is periodic, it can have metric selections that are not periodic (see Figure~\ref{Figure_Selections}).

\begin{center}
	\includegraphics[width=3.5 in]{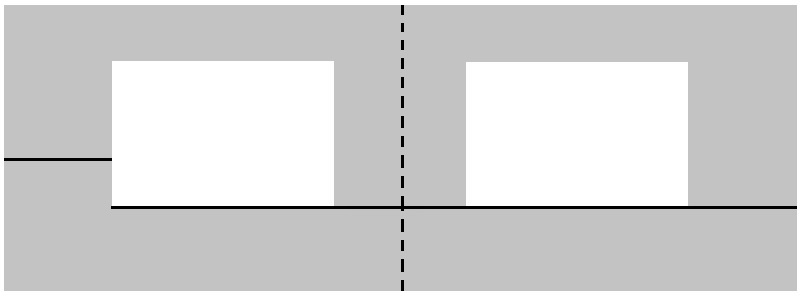}\\
	\begin{footnotesize}
	\end{footnotesize}
\end{center}
\begin{pict}\label{Figure_Selections}
	\footnotesize
	{\centering {An example of a non-periodic metric selection of a periodic SVF }\\}
\end{pict}

\smallskip
For $F \in \mathcal{F}[-\pi,\pi]$ and $x \in (-\pi,\pi)$ we define
\begin{equation}\label{LimitSet-Fourier}
A_F(x) = \left\{ \frac{1}{2} \left( s(x+0) + s(x-0) \right) \ : \ s \in \mathcal{S}(F) \right\}.
\end{equation}
We show that this is the limit set of the Fourier approximants.

\begin{propos} \label{Propos_Est_SVF}
Let $F \in  \mathcal{F}[-\pi,\pi]$ and $x \in (-\pi,\pi)$. Then   there exists $\delta_0=\delta_0(x) > 0$ such that for all $\delta \in (0,\delta_0] $ and $n \in \N$ the following estimate holds
\begin{equation} \label{Fourier_Conv_SVF_Jump_Estimate}
 \haus \left( \mathscr{S}_nF(x), A_F(x) \right) \le
K \left[ \frac{ V_{-\pi}^{\pi} (F)}{n} \left( 1+6\cot \left(\frac{\delta}{2} \right) \right) + \omega(\delta) \right],
\end{equation}
where $\omega(\delta)=\max\big \{\NewLeftLocalModul{v_F}{x}{2\delta} , \NewRightLocalModul{v_F}{x}{\delta} \big \}$ and $K > 0$ is a constant that depends only on the underlying norm in the space $\R^d$. 
 \end{propos}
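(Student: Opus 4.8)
The plan is to exploit that, by the metric-selection representation \eqref{Fourier_Repr_Metric_Sel} and the definition \eqref{LimitSet-Fourier}, both sets in question are images of the \emph{same} index set $\mathcal{S}(F)$ under two maps: $s \mapsto \mathscr{S}_n s(x)$ has image $\mathscr{S}_nF(x)$, while $s \mapsto \frac{1}{2}(s(x+0)+s(x-0))$ has image $A_F(x)$. Consequently, for any $s \in \mathcal{S}(F)$ the point $\frac{1}{2}(s(x+0)+s(x-0))$ lies in $A_F(x)$ and $\mathscr{S}_n s(x)$ lies in $\mathscr{S}_nF(x)$; bounding the distance from each element of one set to the correspondingly-indexed element of the other yields, for each direction of the Hausdorff metric,
$$
\haus\big(\mathscr{S}_nF(x), A_F(x)\big) \le \sup_{s \in \mathcal{S}(F)} \Big| \mathscr{S}_n s(x) - \tfrac{1}{2}\big(s(x+0)+s(x-0)\big) \Big|.
$$
(Recall that metric selections are BV by Result~\ref{Result_MetrSel_InheritVariation}, so the one-sided limits $s(x\pm 0)$ exist and $A_F(x)$ is well defined.) Thus the proposition reduces to a bound, uniform over $s \in \mathcal{S}(F)$, for the Fourier error of a single metric selection at $x$.

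Next I would fix $s \in \mathcal{S}(F)$ and reduce to scalar functions: writing $s = (s_1,\dots,s_d)$, both $\mathscr{S}_n$ and the one-sided-limit averaging act coordinatewise, so by the equivalence of norms on $\Rd$ it suffices to bound each scalar error $|\mathscr{S}_n s_j(x) - \frac{1}{2}(s_j(x+0)+s_j(x-0))|$ and recombine, absorbing the norm constants into $K$. To each scalar $s_j$ I would apply the refined Dirichlet--Jordan estimate of Theorem~\ref{Theo_FourierConv_Class_Jump}. For this I must verify that the relevant $2\pi$-periodic function belongs to a class $\EqvClass{1}{B}{x}{\omega}$ with $B$ a multiple of $V_{-\pi}^{\pi}(F)$ and with $\omega(\delta)=\max\{\NewLeftLocalModul{v_F}{x}{2\delta},\NewRightLocalModul{v_F}{x}{\delta}\}$. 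The variation input comes from Result~\ref{Result_MetrSel_InheritVariation}, $V_{-\pi}^{\pi}(s)\le V_{-\pi}^{\pi}(F)$, passed to coordinates via Remark~\ref{Remark_Class_F_Coordinates_Jump}; the quasi-modulus inputs come from Lemmas~\ref{Lemma_NewLeftModulu_MetricSelection} and~\ref{Lemma_NewRightModulu_MetricSelection}, which give $\NewLeftLocalModul{v_s}{x}{\delta}\le\NewLeftLocalModul{v_F}{x}{2\delta}$ and $\NewRightLocalModul{v_s}{x}{\delta}\le\NewRightLocalModul{v_F}{x}{\delta}$. Feeding these into~\eqref{Fourier_Class_Est} produces exactly the two-term bound $\frac{V_{-\pi}^{\pi}(F)}{n}\big(1+6\cot(\delta/2)\big)+\omega(\delta)$ up to a universal factor.

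The interior location of $x$ enters through the choice of $\delta_0$: I would set $\delta_0 = \min\{x+\pi,\ \pi-x\}$, so that for $\delta\le\delta_0$ the neighborhood $[x-\delta,x+\delta]$ stays inside $(-\pi,\pi)$. This is what lets the quasi-modulus bounds transfer from $s$ on $[-\pi,\pi]$ to its periodic extension: the local increments $v(x-0)-v(x-\delta)$ and $v(x+\delta)-v(x+0)$ of the variation function are untouched by periodization once the relevant one-sided neighborhoods avoid the endpoints, and likewise $s(x\pm0)$ are unchanged. Hence the class membership $\EqvClass{1}{B}{x}{\omega}$ of the periodic extension holds with precisely the advertised $B$ and $\omega$, and the per-selection estimate is uniform in $s$, so taking the supremum and recombining coordinates closes the argument.

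The main obstacle I anticipate is precisely the periodization step. A metric selection need not satisfy $s(-\pi)=s(\pi)$ (see Figure~\ref{Figure_Selections}), so its $2\pi$-periodic extension acquires an endpoint jump, and its period-variation is a priori $V_{-\pi}^{\pi}(s)+|s(\pi-0)-s(-\pi)|$ rather than $V_{-\pi}^{\pi}(s)$. The care needed is to confirm that this extra jump, being located at $\pm\pi$ while $x$ sits a fixed positive distance away, feeds only into the $O(1/n)$ term: the Dirichlet kernel $D_n(x-t)$ is smooth for $t$ near $\pm\pi$, so an integration-by-parts (or localization) argument bounds the endpoint contribution by a constant times $\|F\|_\infty/n$, with the constant controlled by $\cot$ of the distance from $x$ to $\pm\pi$, which stays finite exactly because $x\in(-\pi,\pi)$. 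This term is therefore consistent with—and absorbed into—the first summand of~\eqref{Fourier_Conv_SVF_Jump_Estimate}. Everything else is uniform-in-$s$ bookkeeping of norm constants into $K$.
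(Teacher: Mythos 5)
Your proposal is correct and follows essentially the same route as the paper: the reduction of the Hausdorff distance to a supremum over metric selections, the coordinatewise passage to Theorem~\ref{Theo_FourierConv_Class_Jump} via Remark~\ref{Remark_Class_F_Coordinates_Jump}, and the use of Result~\ref{Result_MetrSel_InheritVariation} together with Lemmas~\ref{Lemma_NewLeftModulu_MetricSelection} and~\ref{Lemma_NewRightModulu_MetricSelection} are exactly the paper's argument. The only place you over-engineer is the periodization: no separate localization or integration-by-parts estimate near $\pm\pi$ is needed, since the endpoint jump satisfies $|s(\pi-0)-s(-\pi)|\le V_{-\pi}^{\pi}(F)$ and is simply absorbed into the variation constant $B\le 2V_{-\pi}^{\pi}(F)$ entering the $O(1/n)$ term of \eqref{Fourier_Class_Est} (and, to secure membership in $\EqvClass{1}{B}{x}{\omega}$ for all $\delta\le\pi$, one just extends $\omega$ by the constant $2V_{-\pi}^{\pi}(F)$ for $\delta>\delta_0$).
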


\begin{proof}
First we observe that $\omega(\delta)$ is a modulus-bounding function by its definition. 

Next, by~\eqref{Fourier_Repr_Metric_Sel}, \eqref{LimitSet-Fourier} we have 
		\begin{equation} \label{Haus_Le_Sup_Sel_Jump}
		\haus \left( \mathscr{S}_nF(x),A_ F(x) \right) \le \sup{ \left\{ \left|\mathscr{S}_ns(x) - \frac{1}{2} \left ( s(x+0) + s(x-0) \right ) \right| \ : \ s \in \mathcal{S}(F) \right\} }. 
		\end{equation}
Indeed, for any $y \in \mathscr{S}_nF(x)$ and for any selection $s \in \mathcal{S}(F)$ with $y=\mathscr{S}_ns(x)$, the following holds: \linebreak
$
\left |y - \frac{1}{2} \left ( s(x+0) + s(x-0) \right ) \right | \ge \dist (y, A_ F(x)).
$	
Similarly, for any $z \in  A_ F(x)$ and for any $s \in \mathcal{S}(F)$ such that $z= \frac{1}{2} \left ( s(x+0) + s(x-0) \right ) $ we have $\left |z - \mathscr{S}_ns(x) \right| \ge \dist (z, \mathscr{S}_nF(x) )$. The last two inequalities, in view of~\eqref{defin_Haus}, imply~\eqref{Haus_Le_Sup_Sel_Jump}.

	Let $s \in \mathcal{S}(F)$, and let $\tilde{s}$ be the $2\pi$-periodic function that coincides with $s$ on $[-\pi,\pi)$. Clearly, $\mathscr{S}_n s = \mathscr{S}_n \tilde{s}$.  By Result~\ref{Result_MetrSel_InheritVariation} we have $V_{-\pi}^{\pi}( \tilde{s} ) \le 2 V_{-\pi}^{\pi} (F)$; the factor $2$ here comes because of a possible jump at the point $\pi$. 
Since $x$ lies in the open interval $(-\pi,\pi)$, there exists $\delta_0 > 0$ such that $[x- \delta_0, x + \delta_0] \subset (-\pi,\pi)$ and therefore $\tilde{s}$ coincides with $s$ in the interval $[x- \delta_0, x + \delta_0]$. Thus by  Lemmas~\ref{Lemma_NewLeftModulu_MetricSelection} and \ref{Lemma_NewRightModulu_MetricSelection}
	$$
	\NewLeftLocalModul{v_{\tilde{s}}}{x}{\delta} \le \NewLeftLocalModul{v_F}{x}{2\delta} \le \omega(\delta), \quad
	\NewRightLocalModul{v_{\tilde{s}}}{x}{\delta} \le \NewRightLocalModul{v_F}{x}{\delta} \le \omega(\delta), \quad 
	\delta \in(0,\delta_0].
	$$
For $\delta > \delta_0$, we redefine $\omega(\delta)$ in a non-decreasing way  so that the estimates  $\NewLeftLocalModul{v_{\tilde{s}}}{x}{\delta} \le \omega(\delta)$, $\NewRightLocalModul{v_{\tilde{s}}}{x}{\delta} \le \omega(\delta)$ hold for all $\delta \in (0,\pi]$.  We achieve it  by putting  $\omega(\delta) = 2 V_{-\pi}^{\pi}(F)$ for $\delta_0 < \delta \le \pi$.

By Remark~\ref{Remark_Class_F_Coordinates_Jump}, there exist a constant $K_1 > 0$ such that  for each metric selection $s \in \mathcal{S}(F)$, each coordinate of its $2\pi$-periodization $\tilde{s}_j$, $j = 1, \ldots, d$, lies in the class $\EqvClass{1}{2 K_1 V_{-\pi}^{\pi} (F)}{x}{ K_1\omega}$. 
Applying  Theorem~\ref{Theo_FourierConv_Class_Jump} to all $\tilde{s}_j$, $j = 1, \ldots, d$, we obtain  for each $s\in \mathcal{S}(F)$
\begin{align*}\label{formula_1}
& \left| \mathscr{S}_ns(x) -  \frac{1}{2}(s(x+0) + s(x-0)) \right|
\le K_2 \max_{j = 1, \ldots,d} { \left| \mathscr{S}_ns_j(x) -  \frac{1}{2}(s_j(x+0) + s_j(x-0)) \right| } \\
& \le K_2 \left[ \frac{2K_1 V_{-\pi}^{\pi}(F)}{\pi n} \left( 1+6\cot \left(\frac{\delta}{2} \right) \right) + 8C K_1 \omega(\delta) \right],
\end{align*}
where the constant $K_2 > 0$ depends only on the underlying norm in $\R^d$. In view of~\eqref{Haus_Le_Sup_Sel_Jump} the claim follows with $K= 2K_1 K_2 \max\{\frac{1}{\pi}, 4C\}$, where $C$ is defined in~\eqref{Int_D_Estimate}.

\end{proof}

The next two theorems are the main results of the paper.

\begin{theorem}\label{Theo_FourierConv_Main_Jump}
Let $F \in  \mathcal{F}[-\pi,\pi]$ and $x \in (-\pi,\pi)$. Then 	
	\begin{equation} \label{Fourier_Conv_SVF_Jump}
	\lim_{n \to \infty}{ \haus \left( \mathscr{S}_nF(x), A_F(x) \right) } = 0.
	\end{equation}
\end{theorem}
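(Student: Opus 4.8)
The plan is to read off the convergence directly from the quantitative estimate of Proposition~\ref{Propos_Est_SVF} by a standard two-stage limiting argument: first freeze a small $\delta$ to control the modulus term, then send $n \to \infty$ to annihilate the remaining term. All of the analytic substance is already packed into that proposition (which itself rests on the metric-selection representation~\eqref{Fourier_Repr_Metric_Sel} and the refined scalar Dirichlet--Jordan estimate of Theorem~\ref{Theo_FourierConv_Class_Jump}), so what remains is essentially bookkeeping.

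First I would verify that the modulus-bounding function $\omega(\delta)=\max\big\{\NewLeftLocalModul{v_F}{x}{2\delta},\,\NewRightLocalModul{v_F}{x}{\delta}\big\}$ appearing in~\eqref{Fourier_Conv_SVF_Jump_Estimate} tends to $0$ as $\delta\to 0^+$. Since $F\in\mathcal{F}[-\pi,\pi]$, its variation function $v_F$ is monotone non-decreasing and hence of bounded variation, so by Remark~\ref{Remark_NewModuli}(ii) both of its one-sided local quasi-moduli vanish as $\delta\to 0^+$; taking the maximum of two null quantities gives $\lim_{\delta\to 0^+}\omega(\delta)=0$. (Equivalently, by Remark~\ref{Remark_NewModuli}(i) for monotone functions, these quasi-moduli equal $v_F(x-0)-v_F(x-2\delta)$ and $v_F(x+\delta)-v_F(x+0)$, which vanish by the very existence of the one-sided limits of $v_F$.)

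Next, given an arbitrary $\eps>0$, I would exploit the free parameter $\delta$ in the estimate. Let $K>0$ and $\delta_0=\delta_0(x)>0$ be as in Proposition~\ref{Propos_Est_SVF}. Using the previous step, choose $\delta\in(0,\delta_0]$ so small that $K\omega(\delta)<\eps/2$. With this $\delta$ now fixed, $\cot\!\left(\tfrac{\delta}{2}\right)$ is a finite constant, so the first summand $K\,\frac{V_{-\pi}^{\pi}(F)}{n}\big(1+6\cot(\tfrac{\delta}{2})\big)$ in~\eqref{Fourier_Conv_SVF_Jump_Estimate} tends to $0$ as $n\to\infty$; hence there is $N\in\N$ such that this summand is below $\eps/2$ for all $n\ge N$. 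Substituting both bounds into~\eqref{Fourier_Conv_SVF_Jump_Estimate} yields $\haus\left(\mathscr{S}_nF(x),A_F(x)\right)<\eps$ for every $n\ge N$, which is exactly the claimed limit~\eqref{Fourier_Conv_SVF_Jump}.

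The only subtlety worth flagging is the \emph{order} of the two limits: because $\cot(\tfrac{\delta}{2})\to\infty$ as $\delta\to 0^+$, one cannot let $\delta$ and $n$ go to their limits simultaneously, and $\delta$ must be frozen before $n$ is sent to infinity. This is the sole point requiring care, and it is a genuine feature rather than an obstacle; there is no residual hard step, since the delicate interplay between the oscillatory Dirichlet kernel and the jump behaviour of the metric selections has already been resolved in Proposition~\ref{Propos_Est_SVF}.
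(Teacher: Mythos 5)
Your proposal is correct and follows exactly the paper's own argument: invoke Remark~\ref{Remark_NewModuli}(ii) to see that $\omega(\delta)\to 0$ as $\delta\to 0^+$, then in the estimate~\eqref{Fourier_Conv_SVF_Jump_Estimate} first fix $\delta\in(0,\delta_0(x)]$ with $K\omega(\delta)<\varepsilon/2$ and afterwards take $n$ large enough to make the remaining term less than $\varepsilon/2$. Your explicit remark about the necessity of freezing $\delta$ before letting $n\to\infty$ is a sensible clarification of the same two-stage limiting argument the paper uses.
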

\begin{proof}
Let $\omega(\delta)=\max\big \{\NewLeftLocalModul{v_F}{x}{2\delta} , \NewRightLocalModul{v_F}{x}{\delta} \big \}$.
By Remark~\ref{Remark_NewModuli}(ii), $\omega(\delta) \to 0$ as $\delta \to 0+$. To prove~\eqref{Fourier_Conv_SVF_Jump}, take an arbitrary $\varepsilon > 0$ and choose in~\eqref{Fourier_Conv_SVF_Jump_Estimate}  first $\delta \in (0, \delta_0(x)]$ so small that $K \omega(\delta) < \frac{\varepsilon}{2}$. Then by choosing $n$ so large that $K  V_{-\pi}^{\pi} (F) \frac{1}{n} \left( 1+6\cot \left(\frac{\delta}{2} \right) \right)  < \frac{\varepsilon}{2}$ we complete the proof. 
\end{proof}
}

In case $F$ is continuous, its Fourier series converges to $F$ in the Hausdorff metric. Namely, the following holds true.
\begin{theorem}\label{Theo_FourierConv_Main}
	Let $F \in  \mathcal{F}[-\pi,\pi]$ and let $F$ be continuous at $x \in (-\pi,\pi)$. Then
	$$
	\lim_{n \to \infty}{ \haus \left( \mathscr{S}_nF(x), F(x) \right) } = 0.
	$$
	If $F$ is continuous in a closed interval $I \subset (-\pi,\pi)$, then the convergence is uniform in $I$. 
\end{theorem}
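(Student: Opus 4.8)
The plan is to handle the pointwise statement first by identifying the limit set $A_F(x)$ of Theorem~\ref{Theo_FourierConv_Main_Jump} with $F(x)$, and then to upgrade to uniform convergence by making the error bound of Proposition~\ref{Propos_Est_SVF} independent of $x\in I$. For the pointwise claim I would argue as follows. Since $F$ is continuous at $x$, Theorem~\ref{Theorem_LocModuli_s<=LocModuli_v_F} shows that every metric selection $s\in\mathcal{S}(F)$ is continuous at $x$, so $s(x+0)=s(x-0)=s(x)$ and hence $\frac12\big(s(x+0)+s(x-0)\big)=s(x)$. Consequently $A_F(x)=\{\,s(x):s\in\mathcal{S}(F)\,\}$, which by the representation of $F$ through its metric selections (Result~\ref{Result_MetSel_ThroughAnyPoint_Repres}) equals exactly $F(x)$. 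Substituting $A_F(x)=F(x)$ into Theorem~\ref{Theo_FourierConv_Main_Jump} yields $\haus(\mathscr{S}_nF(x),F(x))\to 0$.

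For the uniform statement on a closed interval $I\subset(-\pi,\pi)$ I would start from estimate~\eqref{Fourier_Conv_SVF_Jump_Estimate}. The only $x$-dependent ingredients there are the threshold $\delta_0=\delta_0(x)$ and the modulus $\omega(\delta)=\max\{\NewLeftLocalModul{v_F}{x}{2\delta},\NewRightLocalModul{v_F}{x}{\delta}\}$; the remaining data ($K$, $V_{-\pi}^{\pi}(F)$, and the $\cot$ factor) are already uniform. Because $I$ is compact and contained in the open interval $(-\pi,\pi)$, a single $\delta_0>0$ with $[\,\min I-2\delta_0,\ \max I+2\delta_0\,]\subset(-\pi,\pi)$ serves as a common threshold for every $x\in I$, replacing the pointwise $\delta_0(x)$ used in the proof of Proposition~\ref{Propos_Est_SVF}. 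It then remains to dominate $\omega(\delta)$ by a single modulus-bounding function $\Omega(\delta)$ with $\Omega(\delta)\to0$, valid for all $x\in I$.

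To build $\Omega$, I note that $F$ continuous on $I$ forces $v_F$ to be continuous at every point of $I$ by Corollary~\ref{Corol_ContEquiv_v_f-f}, so $v_F(x\pm0)=v_F(x)$ for $x\in I$; since $v_F$ is non-decreasing, $\NewLeftLocalModul{v_F}{x}{2\delta}\le v_F(x)-v_F(x-2\delta)$ and $\NewRightLocalModul{v_F}{x}{\delta}\le v_F(x+\delta)-v_F(x)$, whence $\omega(\delta)\le v_F(x+2\delta)-v_F(x-2\delta)$. I would therefore set
$$
\Omega(\delta)=\sup_{x\in I}\big(v_F(x+2\delta)-v_F(x-2\delta)\big),\qquad 0<\delta\le\delta_0,
$$
which is non-decreasing in $\delta$ and satisfies $\omega(\delta)\le\Omega(\delta)$ for every $x\in I$. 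The crux is to show $\Omega(\delta)\to0$ as $\delta\to0^+$, i.e.\ that a monotone function continuous at every point of a compact interval has uniformly vanishing oscillation there. I would prove this by a finite-cover argument: for fixed $\eps>0$, continuity of $v_F$ at each $z\in I$ gives $\eta_z>0$ with $v_F(z+\eta_z)-v_F(z-\eta_z)<\eps$; covering $I$ by finitely many intervals $(z_i-\eta_{z_i}/2,\,z_i+\eta_{z_i}/2)$ and taking $\delta<\tfrac14\min_i\eta_{z_i}$ forces $[x-2\delta,x+2\delta]\subset(z_i-\eta_{z_i},\,z_i+\eta_{z_i})$ for the index $i$ with $x$ in the $i$-th interval, so monotonicity gives $v_F(x+2\delta)-v_F(x-2\delta)<\eps$ uniformly in $x\in I$. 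This uniform-oscillation step is the main obstacle, since continuity of $v_F$ is only available on $I$ itself, not beyond its endpoints.

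With $\Omega$ in hand, combining~\eqref{Fourier_Conv_SVF_Jump_Estimate} with the identity $A_F(x)=F(x)$ gives, for all $x\in I$, all $\delta\in(0,\delta_0]$, and all $n\in\N$,
$$
\haus\big(\mathscr{S}_nF(x),F(x)\big)\le K\left[\frac{V_{-\pi}^{\pi}(F)}{n}\left(1+6\cot\Big(\frac{\delta}{2}\Big)\right)+\Omega(\delta)\right].
$$
To finish, given $\eps>0$ I would first fix $\delta\in(0,\delta_0]$ with $K\Omega(\delta)<\eps/2$, and then choose $n$ so large that $K\,V_{-\pi}^{\pi}(F)\,n^{-1}\big(1+6\cot(\delta/2)\big)<\eps/2$; this $n$ is independent of $x$, which yields the uniform convergence on $I$.
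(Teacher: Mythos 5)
Your proof is correct and follows essentially the same route as the paper: the pointwise claim comes from identifying $A_F(x)$ with $F(x)$ (continuity of all metric selections at $x$ via Theorem~\ref{Theorem_LocModuli_s<=LocModuli_v_F} plus the representation of $F$ by its metric selections), and the uniform claim from making the estimate of Proposition~\ref{Propos_Est_SVF} uniform over $I$. Your finite-cover argument showing $\sup_{x\in I}\big(v_F(x+2\delta)-v_F(x-2\delta)\big)\to 0$ correctly supplies a uniformity step that the paper's one-line appeal to Corollary~\ref{Corol_UniformConv_BV} leaves implicit, since that corollary does require a single modulus-bounding function valid for all $x\in I$.
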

\begin{proof}
The first statement of the above theorem is an immediate consequence of Theorem~\ref{Theo_FourierConv_Main_Jump}. For the second statement note that there exists $\delta_0 > 0$ such that $[x-\delta_0,x+\delta_0] \subset (-\pi,\pi)$ for all $x \in I$. Defining $\omega(\delta)$ as in the proof of Proposition~\ref{Propos_Est_SVF} and applying Corollary~\ref{Corol_UniformConv_BV}, we obtain the result.
\end{proof}


 \section{On the limit set of the Fourier~approximants}

In the previous section we proved that the sequence $\{ \mathscr{S}_nF(x)\}_{n \in \N} $ converges  at a point $x$ where $F$ is discontinuous to the set $A_F(x) = \left \{ \frac{1}{2} \left( s(x+0) + s(x-0) \right) \ : \ s \in \mathcal{S}(F) \right \} $. An interesting question is to describe the set $A_F(x)$ in terms of the values of $F$. At the moment we do not have a satisfactory answer to this question. 

The two statements below give some idea about the structure of a set-valued function $F$ and its metric selections at a point $x$  where $F$ is discontinuous.
 
 \begin{propos} \label{Propos_LimitsInGraph}
 	For $F\in \calF$ and $x \in (a,b)$  we have $F(x-0) \cup F(x+0) \subseteq F(x)$. 
 \end{propos}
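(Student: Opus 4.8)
The plan is to bypass metric selections entirely and exploit directly the defining property of the class $\calF$: the graph of $F$ is \emph{compact}, hence \emph{closed}. Closedness of $\Graph(F)$ is exactly the feature that forces the one-sided limit sets to be absorbed by $F(x)$, and it is indispensable. Without it the inclusion would fail: if $F\equiv\{q\}$ on $(a,b)$ except for a single off-graph value $F(x)=\{p\}$ with $p\ne q$, then $F(x-0)=F(x+0)=\{q\}\not\subseteq\{p\}=F(x)$; but such an $F$ is \emph{not} in $\calF$, precisely because $(x,q)$ is a limit point of the graph that is not contained in it. So the whole content of the proposition is the passage ``limit point of the graph $\Rightarrow$ point of the graph''.

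First I would record that the one-sided limits $F(x-0)$ and $F(x+0)$ exist as elements of $\Comp$ in the Hausdorff metric. Since $F\in\calF$ is of bounded variation as a map into the complete metric space $(\Comp,\haus)$, it possesses left and right limits at every point by the cited result of Chistyakov~\cite{Chistyakov:On_BV-mappings}. I would treat $F(x-0)$ in detail; the argument for $F(x+0)$ is word-for-word the same with the one-sided approach taken from the right, both being legitimate because $x\in(a,b)$ is interior so the approaching points stay in $[a,b]$.

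Next, fix $y\in F(x-0)$ and a sequence $t_k\uparrow x$. Because each $F(t_k)$ is compact, I choose a metric projection $y_k\in\PrjXonY{y}{F(t_k)}$, so that
$$
|y-y_k|=\dist(y,F(t_k))\le \sup_{z\in F(x-0)}\dist(z,F(t_k))\le \haus\big(F(x-0),F(t_k)\big)\to 0,\quad k\to\infty.
$$
Hence $y_k\to y$, while $(t_k,y_k)\in\Graph(F)$ and $(t_k,y_k)\to(x,y)$ in $\R^{d+1}$. Since $\Graph(F)$ is compact it is closed, so its limit point $(x,y)$ lies in $\Graph(F)$, that is $y\in F(x)$. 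As $y\in F(x-0)$ was arbitrary this gives $F(x-0)\subseteq F(x)$, and symmetrically $F(x+0)\subseteq F(x)$, proving the claim.

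The only routine points needing care are the existence of the one-sided Hausdorff limits and the step extracting an honest sequence $y_k\in F(t_k)$ with $y_k\to y$ out of the smallness of $\haus\big(F(x-0),F(t_k)\big)$; both are harmless given that the range is bounded, $\|F\|_\infty<\infty$, so all sets sit inside one fixed compact ball. I do not foresee a genuine obstacle, and the main thing to guard against is a tempting but circular detour: routing the proof through $\mathcal S(F)$ only re-proves that left limits of metric selections lie in $F(x-0)$, which is vacuous here. The direct graph-closedness argument is both shorter and the correct one.
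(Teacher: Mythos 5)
Your proof is correct and follows essentially the same route as the paper's: both arguments produce a sequence $(t_k,y_k)\in\Graph(F)$ converging to $(x,y)$ and invoke the closedness of the compact graph. The only cosmetic difference is that the paper obtains the approximating points $y_k$ via the identification of $F(x-0)$ with the lower Kuratowski limit (Remark~\ref{Remark_Kurat=Haus}), whereas you construct them directly as metric projections using the bound $\dist(y,F(t_k))\le \haus\big(F(x-0),F(t_k)\big)\to 0$; both are valid.
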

\begin{proof}
We show that $F(x-0) \subseteq F(x)$, the proof for $F(x+0)$ is similar. 

Since $F$ is bounded, we can restrict our consideration to a bounded region of $\R^d$, so that the convergence in the Hausdorff metric is equivalent to the convergence in the sense of Kuratowski (see Remark~\ref{Remark_Kurat=Haus}).

Consider $y \in F(x-0)$. Take an arbitrary sequence $\{x_n\}_{n \in \N}$ with $x_n < x$, $n \in \N$, and $x_n \to x$, $n \to \infty$. Since $F(x-0)$ coincides with the lower Kuratowski limit $\liminf_{t \to x-0}{F(t)}$, for each $n$ there exists $y_n \in F(x_n)$ such that $y_n \to y$, $n \to \infty$.  We have $(x_n,y_n) \in \Graph{(F)}$ for each $n \in \N$ and $(x_n,y_n) \to (x,y)$, $n \to \infty$. Since $\Graph{(F)}$ is  closed, it follows that $(x,y) \in \Graph{(F)}$, and thus $y \in F(x)$. This implies that $F(x-0) \subseteq F(x)$.
\end{proof}

\begin{propos}	
 	For $F \in \mathcal{F}[a,b]$ 
 	$$
 	F(x-0) = \{ s(x-0) \ : \ s \in \mathcal{S}(F) \}, \quad x \in (a,b],  \quad \text{and} \quad
 	F(x+0) = \{ s(x+0) \ : \ s \in \mathcal{S}(F) \}, \quad x \in [a,b).
 	$$
 \end{propos}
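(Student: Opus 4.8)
The plan is to prove the first identity, for the left limit; the second, for the right limit, then follows by a symmetric argument with Lemma~\ref{Lemma_NewRightModulu_MetricSelection} in place of Lemma~\ref{Lemma_NewLeftModulu_MetricSelection}. Throughout I use that $F\in\calF$ is of bounded variation in the Hausdorff metric, so the left limit $F(x-0)=\lim_{t\to x-0}F(t)$ exists in $(\Comp,\haus)$ and is a nonempty compact set, and that every metric selection $s$ is of bounded variation (Result~\ref{Result_MetrSel_InheritVariation}), so $s(x-0)$ exists.

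The inclusion $\{s(x-0):s\in\mathcal{S}(F)\}\subseteq F(x-0)$ is the easy direction. For a metric selection $s$ and $t<x$ we have $s(t)\in F(t)$, hence $\dist(s(t),F(x-0))\le\haus(F(t),F(x-0))\to 0$ as $t\to x-0$. Since $\dist(\cdot,F(x-0))$ is continuous and $s(t)\to s(x-0)$, letting $t\to x-0$ yields $\dist(s(x-0),F(x-0))=0$, and as $F(x-0)$ is closed this gives $s(x-0)\in F(x-0)$.

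For the reverse inclusion, fix $y\in F(x-0)$. Since $F(x-0)$ coincides with the lower Kuratowski limit $\liminf_{t\to x-0}F(t)$, I choose $t_n\uparrow x$ and $y_n\in F(t_n)$ with $y_n\to y$. By Result~\ref{Result_MetSel_ThroughAnyPoint_Repres} there is for each $n$ a metric selection $s_n$ with $s_n(t_n)=y_n$. The decisive \emph{uniform} estimate comes from Lemma~\ref{Lemma_NewLeftModulu_MetricSelection}: for every $n$ and every $t<x$, by the monotonicity of $v_{s_n}$,
$$
|s_n(t)-s_n(x-0)|\le v_{s_n}(x-0)-v_{s_n}(t)=\NewLeftLocalModul{v_{s_n}}{x}{x-t}\le\NewLeftLocalModul{v_F}{x}{2(x-t)},
$$
a bound independent of $n$ which tends to $0$ as $t\to x-0$ by Remark~\ref{Remark_NewModuli}(ii); thus $\{s_n\}$ is left-equicontinuous at $x$. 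The same estimate applied at $t_n$ gives $|s_n(x-0)-y_n|=|s_n(x-0)-s_n(t_n)|\le\NewLeftLocalModul{v_F}{x}{2(x-t_n)}\to 0$, so $s_n(x-0)\to y$.

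Finally I pass to the limit: since the $s_n$ are uniformly bounded and of uniformly bounded variation (Result~\ref{Result_MetrSel_InheritVariation}), Helly's Selection Principle yields a subsequence $\{s_{n_k}\}$ converging pointwisely to some $s^\infty$, which is a metric selection by Theorem~\ref{theorem_PW-limit_of_MS_isMS}. It remains to verify $s^\infty(x-0)=y$, and this is the main obstacle, because it amounts to interchanging the limit $t\to x-0$ with the subsequential limit $k\to\infty$, an interchange that fails without uniformity. I would resolve it via the uniform left-equicontinuity above: given $\eps>0$, fix $t_0<x$ with $\NewLeftLocalModul{v_F}{x}{2(x-t_0)}<\eps$ and $|s^\infty(t_0)-s^\infty(x-0)|<\eps$, and estimate $|s^\infty(t_0)-y|$ by inserting $s_{n_k}(t_0)$ and $s_{n_k}(x-0)$, using pointwise convergence at $t_0$, the uniform bound for the middle term, and $s_{n_k}(x-0)\to y$. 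Letting $k\to\infty$ gives $|s^\infty(t_0)-y|\le\eps$, whence $|s^\infty(x-0)-y|\le 2\eps$; since $\eps$ is arbitrary, $s^\infty(x-0)=y$, completing the proof.
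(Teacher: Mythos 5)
Your proof is correct, but it takes a genuinely different route from the paper's. The paper replaces $F$ by the multifunction $\widetilde F$ that equals $F$ off $x$ and equals $F(x-0)$ at $x$; $\widetilde F$ is left continuous at $x$ and belongs to $\calF$, so by Result~\ref{Result_MetSel_ThroughAnyPoint_Repres} every $y \in F(x-0)=\widetilde F(x)$ lies on a metric selection $s$ of $\widetilde F$, which is left continuous at $x$ by Theorem~\ref{theorem_LocLeftModuli_s<=LocLeftModuli_v_f}, whence $y=s(x)=s(x-0)$; the inclusion $\mathcal{S}(\widetilde F)\subseteq\mathcal{S}(F)$ then finishes the argument. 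You instead approximate $y$ by points $y_n\in F(t_n)$ with $t_n\uparrow x$, thread metric selections $s_n$ of $F$ itself through $(t_n,y_n)$ via Result~\ref{Result_MetSel_ThroughAnyPoint_Repres}, and pass to a limit selection using Helly's principle and Theorem~\ref{theorem_PW-limit_of_MS_isMS}; the uniform bound $|s_n(t)-s_n(x-0)|\le \NewLeftLocalModul{v_F}{x}{2(x-t)}$ from Lemma~\ref{Lemma_NewLeftModulu_MetricSelection} is exactly what is needed both to get $s_n(x-0)\to y$ and to justify the interchange of the limits $t\to x-0$ and $k\to\infty$, and you carry out that interchange correctly. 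The trade-off: the paper's argument is shorter, but it leans on the assertion $\mathcal{S}(\widetilde F)\subseteq\mathcal{S}(F)$, which it deduces from $\widetilde F(x)\subseteq F(x)$ without detail (metric chains through the point $x$ do change when $F(x)$ is shrunk to $F(x-0)$, so this step is not completely free); your argument never leaves $\mathcal{S}(F)$ and is quantitative throughout, at the price of invoking the heavier compactness machinery (Helly plus the closedness of $\mathcal{S}(F)$ under pointwise limits). Your reduction of the right-limit identity to a symmetric argument with Lemma~\ref{Lemma_NewRightModulu_MetricSelection} is also sound.
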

 \begin{proof}
 	We prove the first claim, the proof of the second one is similar. 
 	
 	Fix $x\in(a,b]$. The inclusion $\{ s(x-0) \ :\ s\in \mathcal{S}(F)\} \subseteq F(x-0)$ follows from the fact that $F(x-0)$ coincides with the Kuratovski upper limit $\limsup_{t \to x-0}{F(t)}$ (see Remark~\ref{Remark_Kurat=Haus}). It remains to show  $F(x-0) \subseteq \{s(x-0) \ :\ s\in \mathcal{S}(F)\} $.
 	Define a multifunction $\widetilde F: [a,b] \to \Comp$ by
 	$$
 	\widetilde F(t)=
 	\left \{ \begin{array}{ll}
 	F(t), & t \neq x, \\
 	F(x-0), & t=x.
 	\end{array}
 	\right.
 	$$
 	Clearly,  $\widetilde F$ is left continuous at~$x$ and $\widetilde F \in \calF$.
 	By Result~\ref{Result_MetSel_ThroughAnyPoint_Repres} $\widetilde F$ has a representation by its metric selection. By Proposition~\ref{Propos_LimitsInGraph} $\widetilde F(x) \subseteq F(x)$, and thus $\mathcal{S}(\widetilde F)  \subseteq \mathcal{S}(F)$.
 	
  Now, let $y \in F(x-0) = \widetilde F(x) \subseteq F(x)$.  There exists a selection $s\in \mathcal{S}(\widetilde F) \subseteq \mathcal{S}( F)$ such that $y=s(x)$.  Since $\widetilde F$ is left continuous at~$x$, by Theorem~\ref{theorem_LocLeftModuli_s<=LocLeftModuli_v_f} $s$ is also left continuous at~$x$. Thus, ${y = s(x) = s(x-0) \in \{s(x-0) \ :\ s\in \mathcal{S}(F)\} }$. 
 \end{proof}

 In view of the last proposition and by the definition of $A_F(x)$ (see~\eqref{LimitSet-Fourier}), we conclude
 $$
 A_F(x) \subseteq \frac{1}{2} F(x-0) + \frac{1}{2} F(x+0),
 $$
 where the right-hand side is the Minkowski average which might be much larger than $A_F(x)$. 
 
One could conjecture that $A_F(x)$ coincides with the metric average of $F(x-0)$ and $F(x+0)$,  namely
$$
A_F(x)=\frac{1}{2} F(x-0) \oplus \frac{1}{2} F(x+0), 
$$
where 
$$
\frac{1}{2} F(x-0) \oplus \frac{1}{2} F(x+0)=\left \{ \frac{1}{2} y^- + \frac{1}{2} y^+ \, : \, (y^-,y^+) \in \Pair{F(x-0)}{F(x+0)} \right \}.
$$
It is easy to see that a sufficient condition for the inclusion
\begin{equation}\label{Formula_InclusionConjecture_1}
A_F(x) \subseteq \frac{1}{2} F(x-0) \oplus \frac{1}{2} F(x+0)
\end{equation}
is  the property
\begin{equation}\label{formula_4}
\big ( s(x-0),s(x+0) \big ) \in \Pair{F(x-0)}{F(x+0)}
\end{equation} 
for any $s\in \mathcal{S}(F)$. However, \eqref{formula_4} is not always true.
The next example provides a counterexample to both~\eqref{formula_4}  and~\eqref{Formula_InclusionConjecture_1}.

\begin{example} \label{Bad_example_with_balls}
Let $B(x_1,x_2)$ denote the closed disc of radius $1$ with center at the point $(x_1,x_2)$, and let $x\in (-\pi,\pi)$.
Consider the function $F: [-\pi,\pi] \to {\mathrm{K}(\R^2)}$, $F \in \mathcal{F}[-\pi,\pi]$, defined by 
 	$$
 	F(t) = \begin{cases}
 	B(-2,2), & t \in [-\pi,x), \\
 	B(-2,2) \cup \{(0,0)\}  \cup B(2,2), & t=x, \\
 	B(2,2), & t \in (x,\pi],
 	\end{cases}
 	$$
 	and its metric selection 
 	$$
 	s(t) = \begin{cases}
 	(-2 + \frac{\sqrt{2}}{2}, 2 - \frac{\sqrt{2}}{2}), & t \in [-\pi,x),  \\
 	(0,0), & t=x,\\
 	(2 - \frac{\sqrt{2}}{2}, 2 - \frac{\sqrt{2}}{2}), & t \in (x,\pi].
 	\end{cases}
 	$$
 	First we show that~\eqref{formula_4} does not hold. It is easy to see that $s(x-0) = (-2 + \frac{\sqrt{2}}{2}, 2 - \frac{\sqrt{2}}{2}) = \Pi_{F(x-0)}((0,0))$ is the projection
  of ${(0,0) \in F(x)}$ on ${F(x-0)}$, and ${s(x+0) = (2 - \frac{\sqrt{2}}{2}, 2 - \frac{\sqrt{2}}{2}) = \Pi_{F(x+0)}((0,0))}$ is the projection of ${(0,0)}$ on ${F(x+0)}$.  
 	On the other hand, the pair $\big ( s(x-0) , s(x+0) \big )$ is not a metric pair of $(F(x-0),F(x+0))$  since the line connecting the points $s(x-0)$ and $s(x+0)$ does not pass through any of the centers of the two discs. 		
 	By similar geometric arguments one can show that ${\frac{1}{2} ( s(x-0) + s(x + 0)) = (0, 2 - \frac{\sqrt{2}}{2}) \in A_F(x)}$, but does not belong to ${\frac{1}{2} F(x-0) \oplus \frac{1}{2} F(x+0)}$.
 	
Note that in this example $F(x-0)\cup F(x+0) \neq F(x)$, and that the selection $s$ for which~\eqref{formula_4} does not hold satisfies $s(x) \notin F(x-0)\cup F(x+0)$.
 \end{example}

Also the reverse inclusion to~\eqref{Formula_InclusionConjecture_1}, $A_F(x) \supseteq \frac{1}{2} F(x-0) \oplus \frac{1}{2} F(x+0)$, does not hold in general. The next example demonstrates this.

\begin{example} \label{Bad_example_with_lines}

Consider the set-valued function $F: [-\pi,\pi] \to \mathrm{K}(\R)$ defined by
$$
F(t) = \begin{cases}
\left\{ -\frac{1}{4}, 0, \frac{1}{4} \right\}, & t \in [-\pi,x),\\
\left\{ -1, -\frac{1}{4}, 0, \frac{1}{4}, 1 \right\}, & t = x, \\
\left\{ -1 + t - x, 1 + t - x \right\}, & t \in (x,\pi],
\end{cases}
$$
where $x \in (-\pi,\pi)$.
We have $F(x-0) = \left\{ -\frac{1}{4}, 0, \frac{1}{4} \right\}$, $F(x+0) = \{-1, 1 \}$, and their metric average is 
$\frac{1}{2} F(x-0) \oplus \frac{1}{2} F(x+0) = \left\{ -\frac{5}{8}, -\frac{1}{2}, \frac{1}{2}, \frac{5}{8} \right\}$. We show that $\frac{1}{2} \in \frac{1}{2} F(x-0) \oplus \frac{1}{2} F(x+0)$ does not belong to $A_F(x)$, i.e., there is no metric selection $s$ of $F$ such that 
\begin{equation} \label{example_sel_prop}
\frac{1}{2} = \frac{1}{2}(s(x-0) + s(x+0)).
\end{equation}
Indeed, if~\eqref{example_sel_prop} is fulfilled for a selection  $\hat{s}$  of $F$, then for this selection we necessarily have $\hat{s}(t) = 0$ for $t \in [-\pi,x)$ and $\hat{s}(t) = 1 + t - x$ for $t \in (x,\pi]$ (with an arbitrary choice of the value $s(x) \in F(x)$). But such $\hat{s}$ cannot be a metric selection, because there are no chain functions that would  lead to such a selection. 
 The only chain functions which might converge to $\hat{s}$ are constant with the value $0$ on the left of $x$ and piecewise constant functions with values sampled from $1 + t -x$ on the right of $x$, possibly except for the interval between two neighboring points of the partition that contains the point $x$.

But no chain function can take the value $0$ on the left of $x$ and the value ${1 + t -x}$ on the right  of $x$. Indeed, if $x$ is not a point of the partition, then this is impossible because the closest point to $0$ in the set ${F(t) = \left\{ -1 + t - x\, ,\, 1 + t - x \right\}}$, $t > x$, is $ -1 + t - x$ and not $ 1 + t - x$, and the closest point to $1 + t -x$ in the set $F(t) = \left\{ -\frac{1}{4}, 0, \frac{1}{4} \right\}$, $t < x$, is $\frac{1}{4}$ and not $0$. If $x$ is a point of the partition, then  the value of a chain function at $x$ is one of the five values from ${F(x) = \left\{ -1, -\frac{1}{4}, 0, \frac{1}{4}, 1 \right\}}$. The choices  $0$ and~$1$ are impossible because of the reasons explained above. But also the other three choices are impossible, since the pointwise limit of the chain functions would not be equal to $0$ on the left of~$x$.

\end{example}
 
Yet, the conjecture $A_F(x)=\frac{1}{2} F(x-0) \oplus \frac{1}{2} F(x+0)$ or a weaker form of it might be true for functions $F$ from a certain subclass of $\calF$.

\bigskip

{\bf Acknowledgement.}
A considerable part of this work was done during a two weeks long stay of the authors at the Mathematisches Forschungsinstitut Oberwolfach in the frames of the ``Research in Pairs'' program.  We thank the Mathematisches Forschungsinstitut Oberwolfach for the hospitality, excellent working conditions and the inspiring atmosphere.


\medskip

{\large \bf{Appendix A: Proof of Theorem~\ref{theorem_PW-limit_of_MS_isMS}}\label{Section_Appendix_A}}
\medskip

\noindent \textbf{Theorem~\ref{theorem_PW-limit_of_MS_isMS}.}
\textit{
	For $F\in \calF$, the pointwise limit of a sequence of metric selections of~$F$ is a metric selection of $F$.
}

\begin{proof}
	Let $s^{\infty}$ be the pointwise limit of a sequence $\{s_n\}_{n \in \N}$ of metric selections of~$F$.
	Since $F(x)$ is closed for each $x \in [a,b]$,\,  $s^{\infty}$ is a selection of $F$. By Result~\ref{Result_MetrSel_InheritVariation} we have  $V_a^b(s_n) \le V_a^b(F)$ for each~$n \in \N$. Theorem~\ref{theorem_LimitFunc_BV} implies $V_a^b(s^{\infty}) \le V_a^b(F)$.
	
	For each $s_n$, there is a sequence of partitions $\{ \chi_{n,k} \}_{k\in \N}$ with $\displaystyle \lim_{k \to \infty} |\chi_{n,k}| =0$ and a sequence of corresponding chain functions $\{c_{n,k}\}_{k \in \N}$ with $ \lim_{k \to \infty} c_{n,k}(x) =s_n(x)$ pointwisely for $x \in [a,b]$.
	
	Without loss of generality we may assume that each of the sequences $\{ \chi_{n,k} \}_{k\in \N}$  satisfies the property $ |\chi_{n,k}|< \frac{1}{2^n} $ for all $k \ge n$. Indeed, for fixed $n$ we have $|\chi_{n,k}| \to 0$ as $k  \to \infty$, and thus there exists $K_n \in \N$ such that $|\chi_{n,k}|<\frac{1}{2^n}$ for all $k \ge K_n$.
	If $K_n \le n$,  then our assumption already holds. If $K_n > n$, we remove ${\chi_{n,1},\ldots,\chi_{n,K_n-n}}$ from the sequence  $\{ \chi_{n,k} \}_{k \in \N}$.
	
	Denote by $D$ the set consisting of all points of all partitions $\{ \chi_{n,k} \}_{n,k \in \N}$, all points of discontinuity of the functions $ \{s_n\}_{n\in \N}$,  all points of discontinuity of  $s^\infty$ and  all points of discontinuity of $F$. The set $D$ is dense in $[a,b]$.  Since the set of discontinuities of each BV function is at most countable, the set $D$ is countable. We  order it as a sequence $D=\{x_j\}_{j \in \N}$.
	
	To show that $s^\infty$ is a metric selection, we will construct a sequence of chain functions that converges pointwisely to $s^\infty$ for all $x\in [a,b]$.
	
	For each $n$, there is an index $k_n \ge n$ such that 
	\begin{equation}\label{formula1}
	|s_n(x_j)-c_{n,k_n}(x_j)| < \frac{1}{2^n}, \quad j=1, \ldots,n. 
	\end{equation}
	Since $k_n \ge n$, we also have
	$$
	|\chi_{n,k_n}| < \frac{1}{2^n}.
	$$
	For simplicity, we denote $\psi_n=c_{n,k_n}$ and $\chi_n= \chi_{n,{k_n}}$. Clearly, $|\chi_n| \to 0$ as $n \to \infty$. We will show that there is a subsequence of $\{\psi_n\}_{n \in \N}$ that converges  to $s^\infty$ pointwisely on $[a,b]$. 
	
	First we show that $\{ \psi_n \}_{n\in \N}$ converges to $s^\infty$  on the set~$D$. Fix $x \in D$.  Then $x=x_{j^*}$ for some $j^* = j^*(x) \in\N$. Given $\varepsilon >0$, choose $N(x,\varepsilon) \in \N$ such that ${|s^\infty(x) - s_n(x)| < \frac{\varepsilon}{2}}$ for all $n>N(x,\varepsilon)$. By \eqref{formula1}, $|s_n(x)-\psi_n(x)| < \frac{1}{2^n}$ for all~$n \ge j^*(x)$. For $n> \log_2(1/\varepsilon)+1$ we have $\frac{1}{2^n}<\frac{\varepsilon}{2}$.  Thus for every $n > \max\{ N(x,\varepsilon),  j^*(x), \log_2(1/\varepsilon)+1 \}$ we have
	$$
	|s^\infty(x) - \psi_n(x)| \le |s^\infty(x) - s_n(x)| + |s_n(x) - \psi_n(x)| < \frac{\varepsilon}{2}+\frac{\varepsilon}{2}=\varepsilon,
	$$
	which proves that $\lim_{n \to \infty}\psi_n(x) = s^\infty(x)$ for $x \in D$.
	
	By Result~\ref{Result_ChainFunct_Bounded&BV} the functions $\psi_n$, $n \in \N$, are uniformly bounded and of uniformly bounded variation.  
	By  Helly's Selection Principle applied consequently to each component of $\psi_n : [a,b] \to \Rd$, there exists a subsequence that converges pointwisely to a certain function $\psi^\infty: [a,b] \to \Rd$. For simplicity, we will denote this subsequence again by $\{\psi_n \}_{n \in \N}$.  Clearly, $\psi^\infty(x) =s^\infty(x)$ for all $x\in D$.
	
	It remains to show that ${\psi^\infty(x) =s^\infty(x)}$ for all $x \in [a,b]\setminus D$. Fix  $x \in [a,b]\setminus D$.  For an arbitrary $r \in D$ and each $n \in \N$ we have
	\begin{equation}\label{formula2}
	|\psi^\infty(x)-s^\infty(x)| \le |\psi^\infty(x)-\psi_n(x)|+|\psi_n(x)-\psi_n(r)|+|\psi_n(r)-s^\infty(r)|+|s^\infty(r)-s^\infty(x)|.
	\end{equation}
	Take $\varepsilon>0$. 
	Since $\lim_{n \to \infty} \psi_n(x) = \psi^\infty(x)$, there exists $N_1(x,\varepsilon) \in \N$ such that 
	$$
	|\psi^\infty(x)-\psi_n(x)|<\frac{\varepsilon}{4}, \quad   n>N_1(x,\varepsilon). 
	$$
	Also for each $r \in D$ we have $\lim_{n \to \infty}  \psi_n(r) = \psi^\infty(r)= s^\infty(r)$, and thus there exists $N_2(r,\varepsilon) \in \N$ such that  
	$$
	|\psi_n(r)-s^\infty(r)|<\frac{\varepsilon}{4}, \quad n>N_2(r,\varepsilon).
	$$
	Since $x \not\in D$, the function $s^\infty$ is continuous at $x$. Therefore, there exists $\delta_1(x,\varepsilon) >0$ such that 
	$$
	|s^\infty(r)-s^\infty(x)| < \frac{\varepsilon}{4}
	$$ 
	for all $r \in D$ with $|x-r| < \delta_1(x,\varepsilon)$.
	
	Since $x \notin D$, also the function $F$ is continuous at $x$.  Consequently, the same is true for the function $v_F$  (see Proposition~\ref{Corol_ContEquiv_v_f-f}), and there exists   $\delta_2(x,\varepsilon)> 0$ such that $\LocalModulCont{v_F}{x}{\delta_2(x,\varepsilon)} < \frac{\varepsilon}{4} $.
	Finally, there exists $N_3(x,\varepsilon) \in \N$ such that $|\chi_n|<\frac{1}{4} \delta_2(x,\varepsilon)$ for $n>N_3(x,\varepsilon)$.
	
	Now, choose and fix $r_0(x) \in D$ that satisfies
	$$
	|x - r_0(x)| < \min \left\{ \delta_1(x,\varepsilon), \frac{1}{4} \delta_2(x,\varepsilon) \right\}.
	$$
	By Lemma~\ref{Lemma_LocModuli_c_n<=LocModuli_v_F} we have
	\begin{align*}
	& |\psi_n(x)-\psi_n(r_0(x))| \le \LocalModulCont{\psi_n}{x}{2|x-r_0(x)|} \le \LocalModulCont{v_F}{x}{2|x-r_0(x)|+2|\chi_n|} \\ 
	& \le \LocalModulCont{v_F}{x}{\delta_2(x,\varepsilon)} < \frac{\varepsilon}{4}, \quad n > N_3(x,\varepsilon).
	\end{align*}
	Taking $n > \max\{N_1(x,\varepsilon), N_2(r_0(x),\varepsilon), N_3(x,\varepsilon) \}$ in \eqref{formula2} we thus obtain
	$$
	|\psi^\infty(x)-s^\infty(x)| < \varepsilon.
	$$
	Hence, $\psi^\infty(x)=s^\infty(x)$ for all $x\in[a,b]$. Thus, $s^\infty$ is a pointwise limit of a sequence of chain functions of $F$ and therefore  a metric selection of $F$. 
\end{proof}
\medskip


{ \large \bf Appendix B: Proof of Theorem~\ref{Theo_FourierConv_Class_Jump}} \label{Section_Appendix_B}
\medskip

\noindent \textbf{Theorem~\ref{Theo_FourierConv_Class_Jump}.}
\textit{ Let $B > 0$, $x \in \R$ and $\omega$ be a modulus-bounding  function. Then for all ${f \in \EqvClass{1}{B}{x}{\omega}}$, each $n \in \N$ and each  $\delta\in(0,\pi]$  we have
	$$
	\left|  \mathscr{S}_n f(x) - \frac{1}{2}\big ( f(x+0) + f(x-0) \big ) \right| \le \frac{2B}{\pi n} \left( 1+6\cot \left(\frac{\delta}{2} \right) \right)+ 8C\omega(\delta),
	$$
	where $C$ is the constant from Lemma~\ref{Lemma:Int_D_Estimate}.
}
\begin{proof}
	Let $f$ be an arbitrary function from the class $\EqvClass{1}{B}{x}{\omega}$.
	By \eqref{D-D*} we have
	$$
	\mathscr{S}_n f(x) -  \mathscr{S}^*_n f(x) = \frac{1}{2\pi} \int_{-\pi}^{\pi} \cos{n(x-t)} \, f(t) dt = \frac{1}{2} (\cos{nx} \, a_n(f) + \sin{nx} \, b_n(f)),
	$$
	where $a_n(f)$  and $b_n(f) $ are the cosine and sine Fourier coefficients \eqref{Fourier-a,b-coeff} of the function $f$. By Lemma~\ref{Lemma:BV_Estimate_ab} we have
	\begin{equation} \label{S-S*}
	|\mathscr{S}_n f(x) -  \mathscr{S}^*_n f(x)| \le \frac{1}{2} ( |a_n(f)| + |b_n(f)|)  \le \frac{2B}{\pi n}. 
	\end{equation}
	Next we estimate $\left|  \mathscr{S}^*_n f(x) - \frac{1}{2}(f(x+0) + f(x-0)) \right|$ in two steps.
	
	\noindent \underline{Step 1.}
	Consider the functions
	$$
	\varphi(t) = \frac{1}{2} ( f(x+t) + f(x-t)), \quad \psi(t) = \frac{1}{2} ( f(x+t) - f(x-t)). 
	$$
	The functions $\varphi$ and $\psi$ are $2\pi$-periodic; $\varphi$ is even with $\varphi(0) = f(x)$ and ${\lim_{t \to 0}{\varphi(t)} = \frac{1}{2} ( f(x+0) + f(x-0))}$, $\psi$ is odd with $\psi(0) = 0$.
	Clearly, $\varphi(t) + \psi(t) = f(x+t)$. The function $D^*_n$  is even. Taking these facts into account we get
	\begin{align*}
	& \mathscr{S}^*_n f(x) = \frac{1}{\pi} \int_{-\pi}^{\pi} D^*_n(x-t)  f(t) dt = \frac{1}{\pi} \int_{-\pi}^{\pi} D^*_n(t-x)  f(t) dt  = \frac{1}{\pi} \int_{-\pi}^{\pi} D^*_n(t)  f(x+t) dt \\&= \frac{1}{\pi} \int_{-\pi}^{\pi} D^*_n(t)  \varphi(t) dt + \frac{1}{\pi} \int_{-\pi}^{\pi} D^*_n(t)  \psi(t) dt  = \frac{2}{\pi} \int_{0}^{\pi} D^*_n(t)  \varphi(t) dt.
	\end{align*}
	Now introduce for $t \in [0,\pi]$ the function
	$$
	r(t) = \begin{cases}
	\varphi(t) - \frac{1}{2} ( f(x+0) + f(x-0)), & t > 0 ,\\
	0, & t = 0.
	\end{cases}
	$$
	This function is continuous at $t=0$. In view of~\eqref{D-int} we get
	$$
	\mathscr{S}^*_n f(x) - \frac{1}{2}(f(x+0) + f(x-0)) =  \frac{2}{\pi} \int_{0}^{\pi} D^*_n(t)  r(t) dt.
	$$
	For $t \in [0,\pi]$, we define $r^+(t) = V_0^t (r)$ and $r^-(t) = r^+(t) - r(t)$. 
	We extend $r^+$, $r^-$ to $[-\pi,0)$ such that the resulting functions are even.  
	The functions $r^+$, $r^-$ are both non-negative and non-decreasing in $[0,\pi]$. 
	In terms of these functions we obtain the representation
	\begin{equation}\label{Formule-repres}
	\mathscr{S}^*_n f(x) - \frac{1}{2}(f(x+0) + f(x-0)) =  \frac{2}{\pi} \int_{0}^{\pi} D^*_n(t)  r^+(t) dt - \frac{2}{\pi} \int_{0}^{\pi} D^*_n(t)  r^-(t) dt.
	\end{equation}
	
	\noindent \underline{Step 2.}
	We derive bounds for the two integrals in~\eqref{Formule-repres}. 
	
	First we estimate the variations of the functions $r$, $r^+$, $r^-$.
	Due to the continuity of $r$ at zero, for its local variation  on the interval $[0,\delta]$, $0 < \delta \le \pi$, we have
	\begin{align*}
	V_0^\delta(r) &= \lim_{t \to 0+}{ V_t^\delta(r)}
	\le \frac{1}{2} \left(  \lim_{t \to 0+}{ V_{x+t}^{x + \delta}(f)} + \lim_{t \to 0+}{ V_{x - \delta}^{x -t}(f)} \right) \le \frac{1}{2} \left( \NewRightLocalModul{v_f}{x}{\delta} + \NewLeftLocalModul{v_f}{x}{\delta} \right)
	\le \omega(\delta).
	\end{align*}
	It follows that
	\begin{equation}\label{Formule_r-estim_1}
	\begin{split}
	&V_0^\delta(r^+)  = r^+(\delta) = V_0^\delta(r) \le \omega(\delta),\\&
	r^-(\delta) = V_0^\delta(r^-) \le V_0^\delta(r^+) + V_0^\delta(r) \le 2\omega(\delta).
	\end{split}
	\end{equation}
	For the global variation of these functions on $[0,\pi]$ or $[-\pi,\pi]$, respectively, we obtain the estimates
	\begin{equation}\label{Formule_r-estim_2}
	\begin{split}
	&V_{0}^{\pi}(r) \le \frac{1}{2} \left( V_0^{\pi}(f) + V_{-\pi}^0(f) \right) = \frac{1}{2} V_{-\pi}^{\pi}(f) \le \frac{1}{2}B,
	\\&
	V_{-\pi}^{\pi}(r^+) = 2 V_{0}^{\pi}(r^+) =  2V_0^{\pi}(r)   \le B,
	\\&
	V_{-\pi}^{\pi}(r^-) = 2 V_{0}^{\pi}(r^-) \le 2 \left( V_{0}^{\pi}(r^+) + V_{0}^{\pi}(r) \right) \le 2B.
	\end{split}
	\end{equation}
	Since the analysis of the two integrals in~\eqref{Formule-repres} is similar, we denote both of the integrals by $\int_{0}^{\pi} D^*_n(t)  r^\pm(t) dt$.
	Fix $\delta \in (0,\pi]$ and write
	$$
	\frac{2}{\pi} \int_{0}^{\pi} D^*_n(t)  r^\pm(t) dt = \frac{2}{\pi} \int_{0}^{\delta} D^*_n(t)  r^\pm(t) dt + \frac{2}{\pi} \int_{\delta}^{\pi} D^*_n(t)  r^\pm(t) dt = \mathcal{A}^\pm + \mathcal{B}^\pm.
	$$
	To estimate $\mathcal{A}^\pm$, we use the second mean value theorem,
	$$
	\mathcal{A}^\pm = \frac{2}{\pi} \int_{0}^{\delta} D^*_n(t)  r^\pm(t) dt
	= r^\pm(\delta-0) \frac{2}{\pi} \int_{\delta^\pm}^{\delta} D^*_n(t)   dt 
	$$
	with some $0 < \delta^\pm < \delta$, and thus by \eqref{Int_D_Estimate} and~\eqref{Formule_r-estim_1}
	$$
	|\mathcal{A}^\pm| \le | r^\pm(\delta)| \cdot \left|   \frac{2}{\pi} \int_{\delta^\pm}^{\delta} D^*_n(t)   dt  \right| <C| r^\pm(\delta)|\le 4 C\omega(\delta).
	$$
	For the second term $\mathcal{B}^\pm$ we have in view of~\eqref{Formule_ModifKernel} 
	$$
	\mathcal{B}^\pm =  \frac{2}{\pi} \int_{\delta}^{\pi}  \frac{1}{2} \sin{nt} \cot{ \left( \frac{1}{2}t \right)} r^\pm(t) dt
	= \frac{1}{\pi} \int_{-\pi}^{\pi}  \sin{nt} \, u_{\delta}(t)r^\pm(t) dt,
	$$
	where
	$$
	u_{\delta}(t) =  
	\begin{cases}
	\frac{1}{2} \cot{ \left( \frac{1}{2}t \right)}, & \delta \le |t| \le \pi ,\\
	0, & \mbox{otherwise}.
	\end{cases}
	$$
	In other words, $\mathcal{B}^\pm$ are the sine Fourier coefficients \eqref{Fourier-a,b-coeff} of the functions $u_{\delta}(t) r^\pm(t)$. The function $\frac{1}{2} \cot{ \left( \frac{1}{2}t \right)}$ is odd, decreasing in the interval $[\delta, \pi]$ and takes the value zero at $\pi$. Consequently, ${ \left | \frac{1}{2} \cot{ \left( \frac{1}{2} t \right)}\right | \le \frac{1}{2} \cot{ \left( \frac{1}{2} \delta \right)} }$ for  $\delta \le |t| \le \pi$ so that
	$$
	\|u_{\delta}\|_\infty \le  \frac{1}{2} \cot{ \left( \frac{1}{2} \delta \right)}
		\quad \text{and} \quad
	V_{-\pi}^\pi \left( u_{\delta} \right) \le 2\cot{ \left( \frac{1}{2} \delta \right)}. 
	$$
	On the other hand, for $t \in [0,\pi]$ we have by~\eqref{Formule_r-estim_2} $0 \le r^+(t) \le r^+(\pi) = V_0^\pi (r) \le \frac{1}{2}B$ and $0 \le r^-(t) \le r^-(\pi) = V_0^\pi (r^-) \le B$, so that 
	$$
	\| r^\pm \|_\infty \le B.  
	$$
	Using the inequality $V_a^b(gh) \le \|g\|_{\infty} V_a^b(h) + \|h\|_\infty V_a^b(g) $, we obtain
	$$
	V_{-\pi}^\pi (u_{\delta} \cdot r^\pm) \le  3B \cot{ \left( \frac{1}{2} \delta \right)}.
	$$
	Applying Lemma~\ref{Lemma:BV_Estimate_ab} again we estimate
	$$
	|\mathcal{B}^\pm| \le 6B \cot{ \left( \frac{1}{2} \delta \right)}  \frac{1}{\pi n}.
	$$
	Finally, taking into account~\eqref{S-S*} we obtain
	\begin{align*}
	& \left|  \mathscr{S}_n f(x) -  \frac{1}{2}(f(x+0) + f(x-0))   \right| 
	\le \left|  \mathscr{S}_n f(x) - \mathscr{S}^*_nf(x)   \right|  +  |\mathcal{A}^+| + |\mathcal{B}^+| +  |\mathcal{A}^-| + |\mathcal{B}^-|
	\\& \le \frac{2B}{\pi n} + 8 C\omega(\delta) + 12B\cot{\left(\frac{1}{2}\delta \right )}\frac{1}{\pi n}=\frac{2B}{\pi n}(1+6\cot(\delta/2))+8C\omega(\delta)
	\end{align*}
	which is the desired estimate.
\end{proof}


\end{document}